\newcommand{\mb}[1]{{\color{black}{#1}}}
\numberwithin{equation}{section}
\theoremstyle{plain}
\newtheorem{thm}{Theorem}[section]
\newtheorem{lem}[thm]{Lemma}
\newtheorem{prop}[thm]{Proposition}
\theoremstyle{remark}
\newtheorem{re}[thm]{Remark}
\theoremstyle{definition}
\newcommand{\weak}{\stackrel{w}{\rightharpoonup}}
\newcommand{\norm}[1]{\lVert #1 \rVert}
\newcommand{\T}[1]{\mathscr{T}_{#1}}
\newcommand{\Rmnum}[1]{\uppercase\expandafter{\romannumeral#1}}
\def \sss{\scriptscriptstyle}
\def\Om{\Omega}
\def\R{{\mathbb R}}
\def\u{\textit{\textbf u}}
\def\x{\textit{\textbf x}}
\def\a{\textit{\textbf a}}
\def\e{\textit{\textbf e}}
\def\b{\textit{\textbf b}}
\def\U{\textit{\textbf U}}
\def\D{\textit{\textbf D}}
\def\y{\textit{\textbf y}} \def\V{\textit{\textbf V}}
\def\g{\textit{\textbf g}}
  \def\f{\textit{\textbf{f}}} \def\n{\textit{\textbf n}}
\def\p{\textit{\textbf p}} \def\v{\textit{\textbf{v}}} \def\w{\textit{\textbf w}}
\def\z{\textit{\textbf z}}
\def \h{\hat}
\def \phii{ \boldsymbol{\phi}}
\def \varphii{ \boldsymbol{\varphi}}
\def\yk{\y_k^*}
\def\pk{\p_k^*}
\def\uk{\u_k^*}
\def\llambda {\boldsymbol{\lambda} }
\def \curl{{\bf curl}}
\def \div{{\rm div}}
\def \0{{\bf 0}}
\def \osc{{\rm osc}}
\title{An adaptive edge element method and its convergence \\ for an 
electromagnetic constrained optimal control problem}
\begin{document}
\author{
Bowen Li\footnote{Department of Mathematics, The Chinese University of Hong Kong, Shatin, N.T., Hong Kong. (bwli@math.cuhk.edu.hk)}
\and Jun Zou\footnote{Department of Mathematics, The Chinese University of Hong Kong, Shatin, N.T., Hong Kong.
The work of this author was
substantially supported by Hong Kong RGC General Research Fund (projects  14306718 and 14306719).
(zou@math.cuhk.edu.hk).}
}

\date{}
\maketitle

\begin{abstract}
In this work, an adaptive edge element method is developed for an $\textit{\textbf{H}}(\curl)$-elliptic constrained optimal control problem. 
We use the lowest-order N\'{e}d\'{e}lec's edge elements of  first family and the piecewise (element-wise) constant functions to approximate the state and the control, respectively, and propose a new   
adaptive algorithm with error estimators involving both residual-type error estimators and lower-order data oscillations. 
By using a local regular decomposition for $\textit{\textbf{H}}(\curl)$-functions and the standard bubble function techniques, we  derive the a posteriori error estimates for the proposed error estimators.
Then we exploit the convergence properties of 
the orthogonal $L^2$-projections and the mesh-size functions to demonstrate that the sequences of the discrete states and
controls generated by the adaptive algorithm converge strongly to the exact solutions of the state and control in the energy-norm and $L^2$-norm, respectively, by first achieving the strong convergence towards the solution to a limiting control problem. Three-dimensional  numerical experiments are also presented to confirm our theoretical results and the quasi-optimality of the adaptive
edge element method.
\end{abstract}

\noindent { \footnotesize {\bf Mathematics Subject Classification
(MSC2000)}: 65K10, 65N12, 65N15, 65N30, 49J20}

\noindent { \footnotesize {\bf Keywords}:
constrained optimal control; Maxwell's equations; a posteriori error estimates; adaptive edge element method; convergence analysis of adaptive algorithm
}

\section{Introduction} \label{introduction}
Many electromagnetic simulation problems involve the following $H({\bf curl})$-elliptic equation
\cite{bossavit1998computational} \cite{monk2003finite}:
\begin{align}\label{eddy}
{\bf curl} \left(\mu^{-1} {\bf curl} \y\right) + \sigma \y = \f   \quad &  \text{in} \ \Om \,,
\end{align}
where $\sigma$ and $\mu$ are the electric permittivity and the magnetic permeability, respectively. 
This problem is also encountered when the implicit time-stepping scheme is used for solving the full time-dependent Maxwell system \cite{hiptmair1998multigrid}. In this work, we are interested in a relevant optimal control problem, namely
to find a specially designed external current source profile so that the resulting electromagnetic field achieves an optimal target design. This optimal control problem has direct applications in many areas, such as the induction heating, the magnetic levitation and the electromagnetic material designs. We refer the readers to \cite{yousept2013optimal}\cite{bommer2016optimal}\cite{yousept2017optimal}\cite{fang2020optimal} for some recent results on the theory and applications of  electromagnetic optimal control problems.

The main purpose of this work is to design and analyse an adaptive edge element method for a class of optimal control problems with the applied current density $\u$ 
being the control variable and the magnetic potential $\y$ being the state variable. Let $\y^d$ and $\u^d$ be our target magnetic field and control, and $\U^{ad}$ denote the unbounded closed convex admissible set:
\begin{equation} \label{eq:addset}
\left\{\u \in \textit{\textbf{L}}^2\left(\Om\right)\,;\ \u \ge \boldsymbol{\psi}  \ \text{a.e.} \ \text{in} \ \Om\right\}\,,
\end{equation}
where $\boldsymbol{\psi}$ is some obstacle function with a certain regularity.
Then we can write the control problem as a constrained minimization problem:
find a pair $(\y^*,\u^*) \in \textit{\textbf{H}}_0(\curl,\Omega) \times\U^{ad}$ to minimize the quadratic objective functional:
\begin{equation}\label{introop1}
J(\y,\u) := \frac{1}{2}\norm{{\bf curl} \y-\y^d}_{0,\Om}^2+\frac{\alpha}{2}\norm{\u-\u^d}_{0,\Om}^2\,,
\end{equation}
subject to the $H({\bf curl})$-elliptic equation:
\begin{equation} \label{introop2}
    {\bf curl} \left(\mu^{-1} {\bf curl} \y\right) + \sigma \y = \f + \u  \quad  \text{in} \ \Om \,.
\end{equation}
By considering a simple transformation $\widetilde{\u} = \u - \boldsymbol{\psi}$, we may set $\boldsymbol{\psi}$ in \eqref{eq:addset} to zero in our subsequent analysis for the sake of simplicity, that is, the admissible set shall take the form:
\begin{equation*}
    \U^{ad}: = \left\{\u \in \textit{\textbf{L}}^2\left(\Om\right)\,;\ \u \ge \0 \ \text{a.e.} \ \text{in} \ \Om\right\}.
\end{equation*}
In this work, we are interested in the numerical study of the practically important situation where 
the local singularities may be expected for the solution to the optimal control problem \eqref{introop1}-\eqref{introop2}, due to the possible irregular geometry of the domain $\Om$, the (possibly large) jumps across the interface 
between two different physical media and the non-smooth source terms (cf.\,\cite{costabel2000singularities}\cite{costabel1999singularities}). 
In these cases, one typically needs to solve the problem on a very fine mesh to fully resolve the singularities, while it may be computationally expensive 
on account of the exponential growth of the number of degrees of freedom (DoFs).
To balance the computational cost and the  numerical accuracy, it has proved to be more promising to refine the mesh adaptively when the numerical accuracy is insufficient.


The theory of adaptive finite element methods (AFEMs),  due to the pioneer work of \mb{Babu$\check{{\rm s}}$ka} and Rheinboldt \cite{babuvvska1978error} in 1978 for elliptic boundary value problems, has become very popular and well developed
in the past four decades; see \cite{ainsworth2011posteriori}\cite{nochetto2009theory} 
and the references therein for an overview. The  
relevant research in the case of Maxwell's equations, which dated back to Beck et al.\,\cite{beck2000residual},  has also reached a mature level nowadays (cf.\,\cite{zhong_convergence_2012}\cite{zhong_convergence_2010}\cite{schoberl2008posteriori}\cite{hoppe2009convergence}). However, the theory of AFEMs for  PDE-constrained optimization problems is still a work in progress. 
Recently, Gong and Yan \cite{gong2017adaptive} proved the convergence and quasi-optimality of AFEMs for an elliptic optimal control problem with the help of the variational discretization and the duality argument, and in this setting there is no need to mark the data oscillations as in \cite{hintermuller2008posteriori}\cite{gaevskaya2007convergence}. In \cite{leng2017convergence}\cite{leng2018convergence}, the authors considered the elliptic optimal control problems with integral control constraints and gave a rigorous proof of the convergence and the  quasi-optimality.  
People may find more recent advances on AFEMs for the control problems in \cite{becker2011quasi}\cite{kohls2014posteriori}\cite{allendes2017adaptive}\cite{gong2018adaptive}.
 
The first contribution towards the adaptive methods for optimal control problems of Maxwell's equations went back to Hoppe and Yousept \cite{hoppe2015adaptive}. An AFEM was designed for the same model problem 
(\ref{introop1})-(\ref{introop2}), 
and the a posteriori error estimation was also presented, including the reliability and efficiency, 
with N\'{e}d\'{e}lec's  edge element approximations for both the control and the state. But the convergence of the adaptive method was not established in \cite{hoppe2015adaptive}.
Later, Pauly and Yousept \cite{pauly2017posteriori} considered an optimal control problem of first-order magnetostatic equations and presented the a posteriori error analysis. Meanwhile, Xu and Zou \cite{xu2017convergent} considered the adaptive approximation for an optimal control problem associated with  an electromagnetic saddle-point model and proved the convergence of discrete solutions.



The current work is a continuation and further improvement of the work \cite{hoppe2015adaptive}, where a piecewise linear $\textit{\textbf{H}}(\curl)$-conforming edge element was used to approximate 
the optimal control variable $\u$.
One crucial motivation of our work is based on the observation that the optimal control $\u^*$ is generally of 
low regularity, at most $\textit{\textbf{H}}^1$ (still under some reasonable conditions) (cf.\,Proposition \ref{prop:reg}).
Therefore we can not expect a better $L^2$- approximation accuracy of the control 
using any higher-order approximation than the piecewise constant one (i.e., the simplest, and also cheapest, 
approximation).
Because of this, we shall use the piecewise constant functions to approximate the control variable $\u$, and propose a new adaptive algorithm with error indicators incorporating the residual-type error estimators and the lower-order data oscillations.  As pointed out in \cite{kohls2014convergence}\cite{Kohls-Kreuzer-Rosch-Siebert2018}, the precise structure of the admissible set and the way we discretize the distributed control problem are the critical ingredients when designing an adaptive
algorithm, which often influence the analysis of the resulting AFEM essentially. It turns out that in our case here, due to the pointwise unilateral constraints in the admissible set $\U^{ad}$, 
the inconsistency between the discrete spaces of the state and control as well as the low regularity of the given data, the data oscillation plays an important role in the performance of the algorithm and has to be considered in the error estimates (as well as the marking strategy) to guarantee the convergence of the algorithm. We can readily notice that such 
a change in the approximation of the control can significantly reduce the number of DoFs and make our algorithm much easier to implement than the algorithm in \cite{hoppe2015adaptive}, while still preserving 
the same numerical accuracy; see Section \ref{experiment} for the detailed discussions and the implementation 
issue on the algorithm. 
To theoretically justify the effectiveness of our adaptive algorithm, we use a Cl\'{e}ment-type quasi-interpolation operator established by Sch\"{o}berl in \cite{schoberl2008posteriori} and the bubble functions \cite{ainsworth2011posteriori} to derive the a posteriori error estimates (reliability and efficiency estimates) of the new error estimators. It is worth mentioning that there is a technical issue in \cite{hoppe2015adaptive} when deriving the optimality system for the discrete optimization problem, that is, the first-order optimality condition (2.8d) there cannot imply its complementarity condition (2.10) 
as the edge element discretization was adopted 
for the control (hence part of the estimates provided in \cite{hoppe2015adaptive} needs some necessary modifications). This is another motivation of the current work. Instead, in our a posteriori analysis, we shall directly cope with the original forms of the  first-order optimality conditions by using the  contraction properties of  $L^2$-projections without the help of the complementarity problems (although they hold in our setting), so that our estimates can be established in 
a rigorous manner. Apart from the aforementioned contributions towards the algorithmic aspects,
%
another main contribution of this work is the strong convergence of the finite element solutions generated by 
the proposed adaptive method.  This work appears to be the first one on the convergence analysis of AFEMs applied to the electromagnetic optimal control problem involving a variational inequality structure. In the finite element analysis provided in \cite{yousept2013optimal} for a quasilinear $\textit{\textbf{H}}(\curl)$-elliptic optimal control problem, the discrete compactness of the N\'{e}d\'{e}lec edge elements \cite{kikuchi1989discrete} is the main tool in establishing strong convergence. However, the discrete compactness can only be applied to the discrete divergence-free sequence, and it is still unknown whether it is true for adaptively generated meshes. During the course of our convergence analysis, we shall borrow some techniques from the nonlinear optimization to avoid the need of discrete compactness 
and exploit some strategies with limiting spaces, which was initially adopted 
in 
\cite{bubuvska1984feedback} and then developed systemically in 
\cite{morin2008basic}. To overcome the difficulties arising from the structure of the constraint set and its discretization, we establish a convergence property of $L^2$-projections associated with the meshes (cf.\,Proposition \ref{average}), which connects the convergence behaviors of the adaptive meshes and the discrete spaces. 
It further allows us to define a limiting optimization problem and 
characterize the limit point of the discrete controls and states (cf.\,Theorems \ref{thm:limweak} and \ref{limiting}). Hereafter, we show that the maximal error estimators and the residuals corresponding to the sequence of  adaptive states and adjoint states vanish (cf.\,Lemmas \ref{lem:vaniestimator} and \ref{btwo}), where we use a generalized convergence result of the mesh-size functions (cf.\,\eqref{uniformstrong}), instead of introducing the buffer layer (cf.\,\cite{xu2017convergent}) between the meshes at different levels, so that our proof can be significantly simplified. By means of
these auxiliary results and again the properties of $L^2$-projections, we are able to prove that the limit point of the discrete triplets $\{(\yk,\pk,\uk)\}$ solves the variational inequality associated with the control problem (cf.\,Theorem \ref{thm:mainresult}). Hence, the strong convergence of Algorithm \ref{alg:Framwork}, i.e., Theorem \ref{thm:converge}, follows immediately.



This work is organized as follows. In Section \ref{problem}, we briefly review the electromagnetic optimal control problem and 
consider its finite element approximation.
Then we propose the a posteriori error estimator and design an adaptive algorithm. We show that the error estimator is both  reliable and efficient in Section \ref{algorithm}. 
After that, we address the issue of the convergence of the adaptive solutions
in Section \ref{converge}. Finally, some numerical results are presented in Section \ref{experiment} to illustrate the theoretical results and indicate the effectiveness and robustness of the adaptive approach versus the uniform mesh refinement. We end this section with a shorthand notation: $x\lesssim y$ for $x \le C y$ for some generic constant $C$ that is independent of the mesh size, but may depend on other quantities, e.g., $\sigma$, $\mu^{-1}$, $f$ and the shape regularity of
the initial triangulation $\T{0}$.  If $x\leq C y$ and $x\geq C y$ hold simultaneously, we denote it simply by $x \approx y$.


\section{The optimal control problem and its discretization}\label{problem}
We start by introducing the standard notation for Sobolev spaces 
and the involved physical parameters (cf.\,\cite{adams2003sobolev}\cite{monk2003finite}). Let  $\Omega \in \R^3$ be a bounded polyhedral domain with polyhedral Lipschitz subdomains $\Omega_i, 1\leq i \leq m$, such that
\begin{equation*}
    \Om_i \cap \Om_j = \varnothing \ \text{for} \ i\neq j \quad \text{and} \quad \bar{\Om} = \bigcup_{i = 1}^m \bar{\Om}_j\,.
\end{equation*}
\mb{For any open bounded subset $G$ of $\Omega$ with a Lipschitz continuous boundary $\partial G$ 
and $\n$ being its unit outward normal vector.} For any real $s > 0$, we define the Hilbert space $H^s(G)$ (resp.\,$\textit{\textbf{H}}^s(G):= H^s(G,\R^3)$) for Sobolev scalar functions (resp.\,vector fields) of order $s$ with an inner product $(\cdot ,\cdot)_{s,G}$ and a norm $\norm{\cdot}_{s,G}$. 
We also introduce the spaces: 
$$\textit{\textbf{H}}(\curl,G) = \{\v \in \textit{\textbf{L}}^2(G)\,;\ \curl\v \in \textit{\textbf{L}}^2(G) \} \quad \text{and} \quad  \textit{\textbf{H}}(\div,G) = \{\v \in \textit{\textbf{L}}^2(G)\,;\ \div\v \in L^2(G)\},$$ equipped with the norm
$\norm{\v}_{\curl,G} = (\norm{\v}_{0.G}^2 + \norm{\curl\v}_{0,G}^2)^{1/2}$ and $\norm{\v}_{\div,G} = (\norm{\v}_{0.G}^2 + \norm{\div\v}_{0,G}^2)^{1/2}$, respectively. Here and in what follows, a bold typeface is used to indicate a vector-valued function. The tangential trace mapping $\gamma_t(\u) := \n \times \u$ and the normal trace mapping $\gamma_n(\u) := \u \cdot \n$ are well-defined on \mb{$\textit{\textbf{H}}(\curl,G)$ and $\textit{\textbf{H}}(\div,G)$}, respectively. 
Then the zero tangential trace space can be introduced by 
$$ \textit{\textbf{H}}_0(\curl,G) = \{\v \in \textit{\textbf{H}}(\curl,G) \,;\  \gamma_t(\v)=0 \ \text{on} \ \partial G \}.$$ 
We will also use the fractional order $\curl$-space 
\begin{align} \label{defhsspace}
    \textit{\textbf{H}}^s(\curl,\Omega) =\{\u \in \textit{\textbf{H}}^s(\Omega)\,; \ \curl \u \in \textit{\textbf{H}}^s(\Omega)\}\,,\quad s > 0\,,
\end{align} 
equipped with the norm $\norm{\cdot}_{H^s(\curl,\Omega)} := \norm{\cdot}_{s,\Omega} + \norm{\curl \cdot}_{s,\Omega}$.
In what follows, we write $\V$ for the most frequently used Sobolev space $\textit{\textbf{H}}_0(\curl,\Om)$.

We are now well-prepared for the mathematical formulation of the control problem.
In this work, we focus on the following constrained minimization problem:
\begin{equation} \label{target:ctn}
\min_{\u \in \U^{ad}} J(\u) := \frac{1}{2}\norm{\curl \y(\u)-\y^d}_{0,\Om}^2+\frac{\alpha}{2}\norm{\u-\u^d}^2_{0,\Om}\,,
\end{equation}
where the state \mb{$\y(\u) \in \V$ is the solution to the} variational system: 
\begin{equation} \label{constraint}
 (\mu^{-1} \curl \y ,\curl \phii)_{0,\Omega}+ (\sigma \y , \phii)_{0,\Omega} = (\f + \u ,\phii)_{0,\Omega} \quad \forall \ \phii \in \V\,.
\end{equation}
We assume that the given source term $\f \in \textit{\textbf{L}}^2(\Omega)$  satisfies 
\begin{equation} \label{aux_assp}
\f\,|_{\Omega_i} \in \textit{\textbf{H}}(\div,\Om_i)\quad  \text{and}\quad \mb{\gamma_n(\f) \in \textit{\textbf{L}}^2(\partial \Omega_i)}\,, \quad  1 \le i \le m\,,
\end{equation} 
while the target magnetic field $\y^d$ and the target control $\u^d$ satisfy
\begin{align} \label{asspdata}
    \y^d \in \textit{\textbf{H}}_0(\curl,\Om) \quad \text{and} \quad \u^d \in \textit{\textbf{L}}^2\left(\Om\right). 
\end{align}
\mb{We remark that although $\f$ is not in $\textit{\textbf{H}}(\div)$ globally in $\Omega$, namely 
$\f \notin \textit{\textbf{H}}(\div,\Omega)$, $ \div \f$ is well-defined on each $\Omega_i$ (hence almost everywhere on $\Omega$) by the assumption \eqref{aux_assp}. In what follows, we write $\div \f\,$  for $\sum_{i = 1}^m \div \f\, \chi_{\Omega_i}$, by abuse of notation, where $\chi_{\Omega_i}$ is the characteristic function of $\Omega_i$.
It is also clear that the weak divergence of $\f\,$ is globally defined on $\Omega$ if and only if the jump of the normal trace $[\gamma_n(\f)]_\Gamma$ across the interfaces $\Gamma: = \cup_{i = 1}^m \partial \Omega_i \backslash \partial \Omega$ vanishes.} The physical parameters $\mu$ and $\sigma$ are supposed to be polynomials on each subdomain $\Om_i$ (hence piecewise polynomials on $\Om$) and satisfy $\mu(x)\geq\mu_0>0$ and $\sigma(x)\geq \sigma_0>0$.
The real $\alpha > 0$ is a stabilization parameter. We define the symmetric bilinear form $B(\cdot,\cdot)$ associated with the state equation \eqref{constraint}:
\begin{equation*}
B(\u,\v) :=  (\mu^{-1} {\bf curl} \u , {\bf curl} \v)_{0,\Omega}+ (\sigma \u , \v)_{0,\Omega}\,.
\end{equation*}
It readily follows from the assumptions of the physical parameters that $B$ is continuous and coercive, thus defining an inner product on $\textit{\textbf{H}}(\curl,\Omega)$ whose induced energy norm $\norm{\cdot}_B:= \sqrt{B(\cdot,\cdot)}$ is equivalent to $\norm{\cdot}_{\curl,\Om}$.

By the direct method in the calculus of variations (see, for instance, Theorems 2.14--2.16 in \cite{troltzsch2010optimal}) and noting that 
the cost functional $J(\u)$ is weakly lower semicontinuous and strictly convex, we have that there always exists a unique minimizer $\u^* \in \U^{ad}$ to the optimization problem. Then the first-order optimality condition yields
\begin{equation}\label{variin}
(\curl\y(\u^*)-\y^d,\curl\y(\u)-\curl\y(\u^*))_{0,\Omega}+\alpha(\u^*-\u^d,\u-\u^*)_{0,\Omega}\geq 0 \quad \forall \ \u \in \U^{ad}\,,
\end{equation}
since $J(\u)$ is Fr\'{e}chet differentiable. 
In what follows, we shall write $\y^*$ for $\y(\u^*)$ for short.
\mb{If we introduce the adjoint state $\p \in \V$ associated with $\y \in \V$ by 
\begin{align*}
    B(\p,\phii) = (\curl\y - \y ^d, \curl\phii)_{0,\Omega} \quad \forall \ \phii \in \V\,,
\end{align*}
\eqref{variin} can be equivalently written as the} following optimality system in terms of the state and the adjoint state, as well as the control variable:
\begin{subequations}
  \begin{empheq}[left=\empheqlbrace]{alignat = 2}
&B(\y ^*,\phii) = (\f+ \u^*,\phii)_{0,\Omega} \quad &\forall& \ \phii \in \V \,, \label{kkt1:1}\\
&B(\p ^*,\phii) = (\curl\y ^*-\y ^d,\curl\phii)_{0,\Omega} \quad &\forall& \ \phii \in \V\,,  \label{kkt1:2}\\
&(\p ^*+\alpha(\u^*-\u^d),\u - \u ^*)_{0,\Omega}\ge \0 \quad &\forall& \ \u \in \U^{ad}\,. \label{kkt1:3}
  \end{empheq}
\end{subequations}
If we further define the \mb{Lagrange multiplier (or the adjoint control)} for the variational inequality \eqref{kkt1:3}:
\begin{equation} \label{eq:confrederi}
    - \llambda^* = - \p^* - \alpha (\u ^* -\u ^d)\,,
\end{equation}
then \eqref{kkt1:3} can be reformulated in a compact form:
    \begin{equation} \label{kkt1:3_2}
        -\llambda^* \in \partial I_{\U^{ad}}(\u^*)\,,
    \end{equation}
where $\partial I_{\U^{ad}}$ is the subdifferential of the indicator function of the admissible set $\U^{ad}$ \cite{hiriart2013convex}. We remark that  
$\llambda^*$ is actually the Fr\'{e}chet derivative of the functional $J(\u)$.
We then conclude from \eqref{kkt1:3}, or \eqref{kkt1:3_2}, that $\u^*$ is nothing else than the $L^2$-projection of $-\frac{\p^*}{\alpha}+\u^d$
on $\U^{ad}$, i.e.,
\begin{equation} \label{eq:conproj}
\u^* = \mathbb{P}_{\U^{ad}}\left(-\frac{\p^*}{\alpha}+\u^d\right).
\end{equation}
Here and throughout this work, we denote by $\mathbb{P}_E$ the $L^2$-projection on the  convex subset $E$ of $\textit{\textbf{L}}^2(\Omega)$. It is worth mentioning that $\mathbb{P}_{\U^{ad}}$ in \eqref{eq:conproj} can also be understood in the pointwise sense due to the unilateral form of $\U^{ad}$:
\begin{equation*}
    \u^* = \max\left\{\0,-\frac{\p^*}{\alpha}+\u^d\right\}\quad \text{a.e. in}\ \Omega\,,
\end{equation*}
where the maximum is taken componentwise. We now end our discussion on the continuous optimal control problem with some regularity results. 
\mb{
\begin{prop} \label{prop:reg}
Assume that $\sigma$ is constant and $\Omega$ is convex or of class $C^{1,1}$. If $\u^d \in \textit{\textbf{H}}^1(\Omega)$ and $f \in \textit{\textbf{H}}(\div,\Omega)$,  then the triplet $(\y^*,\p^*,\u^*)$ satisfying the optimality system \eqref{kkt1:1}--\eqref{kkt1:3} has the  regularity:
$
    \u^*, \y^*,\p^* \in \textit{\textbf{H}}^1(\Omega)\,.
$
If we further assume that $\mu$ is constant, we have $\y^*,\p^* \in \textit{\textbf{H}}^1(\curl,\Omega)$. 


\end{prop}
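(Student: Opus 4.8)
The plan is to exploit the special algebraic structure of the optimality system \eqref{kkt1:1}--\eqref{kkt1:3} together with the classical $\textit{\textbf{H}}(\curl)\cap\textit{\textbf{H}}(\div)$-embedding, and — this is the key organizational point — to break the apparent circular dependence among $\y^*$, $\p^*$ and $\u^*$ by treating the \emph{adjoint state first}. The main analytic tool I would invoke is the regularity result (see, e.g., \cite{monk2003finite}) that on a bounded $\Omega$ which is convex or of class $C^{1,1}$, both spaces $\textit{\textbf{H}}_0(\curl,\Om)\cap\textit{\textbf{H}}(\div,\Om)$ and $\{\v\in\textit{\textbf{H}}(\curl,\Om)\cap\textit{\textbf{H}}(\div,\Om):\gamma_n(\v)=0\}$ embed continuously into $\textit{\textbf{H}}^1(\Omega)$, with $\norm{\v}_{1,\Om}\lesssim\norm{\v}_{0,\Om}+\norm{\curl\v}_{0,\Om}+\norm{\div\v}_{0,\Om}$.

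The crucial first step is to show that $\p^*$ is divergence-free. I would test the adjoint equation \eqref{kkt1:2} with $\phii=\nabla\xi$ for $\xi\in H^1_0(\Omega)$, which is admissible since $\nabla\xi\in\V$. The right-hand side vanishes because $\curl\nabla\xi=\0$, and since $\sigma$ is constant the left-hand side reduces to $\sigma(\p^*,\nabla\xi)_{0,\Om}$; hence $(\p^*,\nabla\xi)_{0,\Om}=0$ for all such $\xi$, i.e.\ $\div\p^*=0$ in the distributional sense. Therefore $\p^*\in\textit{\textbf{H}}_0(\curl,\Om)\cap\textit{\textbf{H}}(\div,\Om)$ with vanishing divergence, and the embedding gives $\p^*\in\textit{\textbf{H}}^1(\Omega)$ \emph{without} any prior regularity of $\y^*$. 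The control regularity then follows from the pointwise projection formula below \eqref{eq:conproj}, $\u^*=\max\{\0,-\p^*/\alpha+\u^d\}$: since $\p^*,\u^d\in\textit{\textbf{H}}^1(\Omega)$, so is $-\p^*/\alpha+\u^d$, and the lattice property of Sobolev spaces (taking the componentwise positive part preserves $H^1$-regularity) yields $\u^*\in\textit{\textbf{H}}^1(\Omega)$.

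For the state I would repeat the divergence computation on \eqref{kkt1:1}: testing with $\phii=\nabla\xi$ and using that $\sigma$ is constant gives $\sigma\,\div\y^*=\div(\f+\u^*)$ weakly. Since $\f\in\textit{\textbf{H}}(\div,\Om)$ and $\u^*\in\textit{\textbf{H}}^1(\Om)\subset\textit{\textbf{H}}(\div,\Om)$, we obtain $\div\y^*\in L^2(\Omega)$, so $\y^*\in\textit{\textbf{H}}_0(\curl,\Om)\cap\textit{\textbf{H}}(\div,\Om)$ and the embedding gives $\y^*\in\textit{\textbf{H}}^1(\Omega)$, completing the first assertion.

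For the $\textit{\textbf{H}}^1(\curl)$-regularity when $\mu$ is also constant, I would apply the second (zero-normal-trace) embedding to the curls. Setting $\w=\curl\y^*$, the strong form of \eqref{kkt1:1} gives $\curl\w=\mu(\f+\u^*-\sigma\y^*)\in\textit{\textbf{L}}^2(\Omega)$; moreover $\div\w=0$ and, because $\gamma_t(\y^*)=0$, the normal trace $\gamma_n(\curl\y^*)=0$. Hence $\w$ lies in $\textit{\textbf{H}}(\curl,\Om)\cap\textit{\textbf{H}}(\div,\Om)$ with zero normal trace and thus in $\textit{\textbf{H}}^1(\Omega)$, giving $\y^*\in\textit{\textbf{H}}^1(\curl,\Om)$. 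The same argument for $\z=\curl\p^*$, where now $\curl\z=\mu(\curl(\curl\y^*-\y^d)-\sigma\p^*)\in\textit{\textbf{L}}^2(\Omega)$ is legitimate since $\curl\y^*\in\textit{\textbf{H}}^1(\Omega)$ and $\y^d\in\textit{\textbf{H}}_0(\curl,\Om)$, yields $\p^*\in\textit{\textbf{H}}^1(\curl,\Om)$. The main obstacle is precisely the circular dependence in the optimality system; it is resolved by the observation that the divergence of the adjoint equation vanishes identically, so that $\p^*$ is divergence-free \emph{a priori} and the regularity can be bootstrapped in the order $\p^*\to\u^*\to\y^*$ rather than solved simultaneously. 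The remaining care lies in verifying the boundary conditions — zero tangential trace for the fields and the induced zero normal trace for their curls — that make the two $\textit{\textbf{H}}^1$-embeddings applicable.
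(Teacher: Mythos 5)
Your proposal is correct and follows essentially the same route as the paper: establish $\div\p^*=0$ from the adjoint equation (you do this by testing with gradients, the paper by taking the divergence of the strong form — the same computation), apply the $\textit{\textbf{H}}_0(\curl)\cap\textit{\textbf{H}}(\div)\hookrightarrow\textit{\textbf{H}}^1$ embedding, bootstrap in the order $\p^*\to\u^*\to\y^*$ via the $H^1$-stability of the projection, and then use the zero-normal-trace embedding for $\curl\y^*$ and $\curl\p^*$ (the paper invokes the de Rham diagram for the fact you verify directly, namely that the curl of an $\textit{\textbf{H}}_0(\curl)$ field is divergence-free with vanishing normal trace). No substantive difference.
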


\begin{proof}
  It is clear that \eqref{kkt1:2} is equivalent to the following equation:
  \begin{align} \label{auxeq_1}
      \curl \mu^{-1} \curl \p^* + \sigma \p^* = \curl (\curl \y^* - \y^d) \quad \text{in}\ \Omega \,,\quad \n \times \p^*   = 0 \quad   \text{on} \ \partial \Om\,,
  \end{align} 
  in the distributional sense. By taking the divergence on both sides of \eqref{auxeq_1}, we have $\div(\sigma \p^*) = 0$. Recall that the space 
  $\textit{\textbf{H}}_0(\curl,\Omega)\cap \textit{\textbf{H}}(\div,\Omega)$ is continuously imbedded in $\textit{\textbf{H}}^1(\Omega)$, if $\Omega$ is convex or of class $C^{1,1}$ (cf.\,\cite{amrouche1998vector}). Since $\sigma$ is assumed to be constant, we readily see $\p^* \in \textit{\textbf{H}}_0(\curl,\Omega)\cap \textit{\textbf{H}}(\div,\Omega) \hookrightarrow \textit{\textbf{H}}^1(\Omega)$. Noting that $\mathbb{P}_{\U^{ad}}$ is a continuous mapping from $\textit{\textbf{H}}^1(\Omega)$ to $\textit{\textbf{H}}^1(\Omega)$ (cf.\,\cite[Sec.\,5.10]{evans1998partial}\cite{kinderlehrer1980introduction}), we obtain from \eqref{eq:conproj} and $\u^d \in \textit{\textbf{H}}^1(\Omega)$ that $u^* \in \textit{\textbf{H}}^1(\Omega)$. Then using the fact that $\f + \u^* \in \textit{\textbf{H}}(\div,\Omega)$, 
  a similar argument applying to $\y^*$ yields $\y^* \in  \textit{\textbf{H}}^1(\Omega)$. 

If we further assume that $\mu$ is constant, it follows from the equation \eqref{kkt1:1} and de Rham diagram \cite[(3.60)]{monk2003finite} that $\curl \y^*$ belongs to the space $\textit{\textbf{H}}(\curl,\Omega)\cap \textit{\textbf{H}}_0(\div,\Omega)$, which is also continuously imbedded in $\textit{\textbf{H}}^1(\Omega)$. Hence, there holds $ \curl \y^* \in \textit{\textbf{H}}^1(\Omega)$, which gives $\y^* \in \textit{\textbf{H}}^1(\curl,\Omega)$ (see \eqref{defhsspace}). Similarly, we can conclude by \eqref{kkt1:2} that $\p^* \in \textit{\textbf{H}}^1(\curl,\Omega)$, under the assumption \eqref{asspdata} that $\y^d \in \textit{\textbf{H}}_0(\curl,\Omega)$. The proof is complete. 
\end{proof}
\begin{re}
The above result essentially relies on the regularity theory for Maxwell's equations (cf.\,\cite{weber1981regularity}\cite{yin2004regularity}\cite{alberti2014elliptic}\cite{alberti2018lectures}).  Here, we consider the case where the coefficients are isotropic and constant, so that the proof is direct. By using the recent result \cite[Theorem 1]{alberti2014elliptic}, one can similarly obtain the following generalization. Assume that $\mu$ is constant, $\sigma \in W^{1,3+\delta}(\Omega)$ for some $\delta > 0$  and $\Omega$ is of class $C^{1,1}$. If $\u^d \in \textit{\textbf{H}}^1(\Omega)$ and $f \in \textit{\textbf{H}}(\div,\Omega)$ hold as above, then the solution $(\y^*,\p^*,\u^*)$ to the optimality system \eqref{kkt1:1}--\eqref{kkt1:3} still satisfies  
$
    \u^*\in \textit{\textbf{H}}^1(\Omega)
$
and $\y^*,\p^* \in \textit{\textbf{H}}^1(\curl,\Omega)$. 
\end{re} 
}

From the above discussion about the regularity, we can see that if there 
are no additional assumptions on the physical coefficients, the given data and 
the domain $\Om$,  the optimal control $\u^*$ should generally be of low regularity (less regular than $\textit{\textbf{H}}^1$). 
\mb{Hence we may expect from the standard ($h$-version) FEM theory that on uniform meshes}, the simplest piecewise constant approximation can already achieve 
the optimal approximation accuracy,
and any higher-order approximations can not improve the numerical accuracy of the optimal control. 
In fact, this is still true even if the optimal control $\u^*$ 
has the $\textit{\textbf{H}}^1$-regularity, since we aim only at the $L^2$-error of the control variable.
This is one of the main motivations of the current work. \mb{We would like to further remark that if the solution is piecewise analytic (singularity 
is allowed on a lower dimensional manifold), one may use general versions of FEM (e.g. $hp$-FEM) to get better convergence rate, even the exponential rate \cite{szabo1991finite}\cite{babuvska1994p}\cite{schwab1998p}\cite{suri2001p} (but the concrete design and implementation of such algorithms would be typically difficult and 
beyond the scope of this work).}

The rest of this section is devoted to the finite element 
discretization of the control problem. We consider a family of conforming and shape regular triangulations $\{\mathscr{T}_h\}$ of $\Omega$, which coincide with the partition $\Omega = \prod_{i=1}^m \Omega_i$ in the sense that $\Omega_i = \bigcup_{\sss T\subset \Omega_i} T$. We set $h := \max_{T \in \mathscr{T}_h}h_T$, where $h_T$ denotes the diameter of $T$. We also refer to $\mathscr{F}_h:= \bigcup \{\partial T\cap \Omega\,;\ T \in \mathscr{T}_h\}$, the set of all the interior faces, as the skeleton of the triangulation $\mathscr{T}_h$. The  diameter of a face $F$ is denoted by $h_F$.
In our algorithm, we use the lowest-order $\textit{\textbf{H}}(\curl)$-conforming edge element space of N\'{e}d\'{e}lec's first family with zero tangential trace: 
\begin{equation*}
\V_h = \left\{ \v_h \in \textit{\textbf{H}}_0(\curl,\Om)\,; \  \v_h|_T = \a_T \times \x + \b_T \ \, \text{with}\ \,\a_T , \b_T \in \R^3\,,\ T \in \mathscr{T}_h \right\}\,,
\end{equation*}
to approximate both the state and the adjoint state.
For approximating the control variable, we employ the simplest space of
element-wise constant functions with respect to the triangulation $\mathscr{T}_h$:
\begin{equation*}
\U_h = \left\{\u_h\in \textit{\textbf{L}}^2\left(\Om\right)\,;\  \u_h |_T \in P_0(T)\,, \ T\in \mathscr{T}_h \right\}\,.
\end{equation*}
Here and in what follows, $P_k(T)$ is \mb{the space of polynomials of degree $\le k$ over $T$}. The discrete admissible set $\U_h^{ad}$ is then given by
\begin{equation*}
\U_h^{ad} := \U_h \cap \U^{ad} = \{\u_h \in \U_h \,;\  \u_h \ge \0\}.
\end{equation*} 
With the help of these notions, we formulate the discrete optimal control problem:
\begin{align}
\mbox{minimize}~~ & J(\y_h,\u_h)
=\frac{1}{2}\norm{\curl \y_h-\y^d}_{0,\Omega}^2+\frac{\alpha}{2}\norm{\u_h-\u^d}_{0,\Omega}^2 \label{discrete problem1}\\
\mbox{over}~~ & (\y_h,\u_h) \in \V_h \times \U_h^{ad}\notag \\
\mbox{subject to}~~ & B(\y_h , \phii_h) = (\f + \u_h ,\phii_h)_{0,\Omega} \quad \forall \ \phii_h \in \V_h\,. \label{discrete problem2}
\end{align}
As in the continuous case, the existence and uniqueness of the solution to the discrete system 
\eqref{discrete problem1}-\eqref{discrete problem2} can be guaranteed,
and the corresponding
optimality system reads as follows: \mb{find $(\y^*_h, \p^*_h, \u_h^*) \in \V_h \times \V_h \times \U^{ad}_h$ by solving}
\begin{subequations}
  \begin{empheq}[left=\empheqlbrace]{alignat = 2}
&B(\y ^*_h,\phii_h) = (\f+ \u^*_h,\phii_h)_{0,\Omega} \quad &\forall& \ \phii_h \in \V_h\,, \label{kkt2:1}\\
&B(\p ^*_h,\phii_h) = (\curl\y ^*_h-\y ^d,\curl\phii_h)_{0,\Omega} \quad &\forall& \ \phii_h \in \V_h\,, \label{kkt2:2}\\
&(\mathbb{P}_h\p ^*_h+\alpha (\u^*_h-\u^d_h),\u_h - \u ^*_h)_{0,\Omega} \ge 0 \quad &\forall& \ \u_h \in \U^{ad}_h\,,\label{kkt2:3}
  \end{empheq}
\end{subequations}
where $\mathbb{P}_h$ denotes the $L^2$-projection $\mathbb{P}_{\U_h}$:
\begin{equation}\label{averoper}
(\mathbb{P}_h \v)_T := \frac{1}{|T|}\int _T \v(\x)d\x: \mb{\textit{\textbf{L}}^2(\Omega) \to \U_h},
\end{equation}
and the approximate target control $\u_h^d$ in \eqref{kkt2:3} is given by $\mathbb{P}_h \u^d$.
Similarly, we denote by $\y_h(\u_h)$ the solution to \eqref{discrete problem2} and introduce the discrete reduced cost functional: $$J_h(\u):= \frac{1}{2}\norm{\curl \y_h(\u_h)-\y^d}_{0,\Om}^2+\frac{\alpha}{2}\norm{\u_h-\u^d}_{0,\Om}^2\,,$$
then its Fr\'{e}chet derivative is given by \mb{(one can actually show $\llambda_h^* \in \U_h^{ad}$, i.e., $\llambda_h^* \ge \0$; see \eqref{eq:disproj_2})}
\begin{equation} \label{eq:disfrederi}
    \llambda_h^* = \mathbb{P}_h\p ^*_h+\alpha (\u^*_h-\u^d_h) \mb{\in \U_h}\,.
\end{equation}
We also see from \eqref{kkt2:3} and the relation  $\mathbb{P}_{\U_h^{ad}} = \mathbb{P}_{\U^{ad}} \mathbb{P}_{h} $ that
\begin{equation}  \label{eq:disproj}
    \u_h^* = \mathbb{P}_{\U_h^{ad}}\left(-\frac{\p_h^*}{\alpha}+\u^d\right) = \mathbb{P}_{\U^{ad}}\left(-\frac{\mathbb{P}_h\p_h^*}{\alpha}+\u_h^d\right),
\end{equation}
which directly implies a pointwise representation for the discrete optimal control:
\begin{equation} \label{eq:disproj_2}
    \u_h^* = \max\left\{\0, -\frac{\mathbb{P}_h\p ^*_h}{\alpha}+\u_h^d  \right\}\,.
\end{equation}

\section{Adaptive algorithm and the a posteriori error analysis} \label{algorithm}
 In this section, we give a complete discussion of the adaptive edge element method for solving the $\textit{\textbf{H}}(\curl)$-elliptic control problem  and derive the a posteriori error estimates. An adaptive finite element method typically takes the successive loops:
\begin{equation*}
    \textbf{SOLVE}\rightarrow \textbf{ESTIMATE}\rightarrow \textbf{MARK}\rightarrow \textbf{REFINE}\,,
\end{equation*}
where the a posteriori error estimation (module {\bf ESTIMATE}) is the core step in the design of an adaptive algorithm, through which we can extract the information on the error distribution. For our algorithm and analysis, we shall use the residual-type a posteriori error estimators in terms of numerically computable quantities, which include the element residuals $\eta_T$, $T \in \T{h}$, and the face residuals 
$\eta_F$, $F \in \mathscr{F}_h$:
\begin{align} \label{eq:elefacres}
\eta_T^2 := \sum_{i=1,2}(\eta_{y,T}^{(i)})^2 +\sum_{i=1,2,3} (\eta_{p,T}^{(i)})^2, \quad \eta_F^2 := \sum_{i=1,2}(\eta_{y,F}^{(i)})^2 + (\eta_{p,F}^{(i)})^2\,,
\end{align}
where $\eta_{y,T}^{(i)}$ and $\eta_{p.T}^{(i)}$ are the element residuals defined by 
\begin{align*}
&\eta_{y,T}^{(1)} := h_T \norm{\f + \u_h^* -\curl\mu^{-1} \curl \y_h^* - \sigma \y_h^*}_{0,T}\,, \quad \eta_{y,T}^{(2)} := h_T \norm{\div(\f -\sigma\y_h^*)}_{0,T}\,,
\end{align*}
and 
\begin{align*}
&\eta_{p,T}^{(1)} := h_T \norm{\curl \y^d+\curl \mu^{-1} \curl\p_h ^*+\sigma \p_h^*}_{0,T}\,,\quad \eta_{p,T}^{(2)} := h_T \norm{\div(\sigma \p_h^*)}_{0,T}, \quad \eta_{p,T}^{(3)} := \norm{\p_h^*-\mathbb{P}_h\p_h^*}_{0,T}\,,
\end{align*}
while $\eta_{y,F}^{(i)}$ and $\eta_{p,F}^{(i)}$ are the face residuals defined by
\begin{align*}
&\eta_{y,F}^{(1)}:=h_F^{1/2}\norm{[\gamma_t(\mu^{-1} \curl \y_h^*)]_F}_{0,F}\,, \qquad \qquad \qquad \eta_{y,F}^{(2)}:=h_F^{1/2}\norm{[\gamma_n(\f + \u_h^* -\sigma \y_h^*)]_F}_{0,F} \,, \\
&\eta_{p,F}^{(1)}:=h_F^{1/2}\norm{[\gamma_t(-\mu^{-1} \curl \p_h^*+\curl\y_h^*)]_F}_{0,F}\,, \quad \eta_{p,F}^{(2)}:=h_F^{1/2}\norm{[\gamma_n(\sigma \p_h^*)]_F}_{0,F}\,.
\end{align*}
Here $[\cdot]_F$ stands for the jump across the face $F$.
\mb{We remark that it may not be easy to see how to define these terms at the first glance, but we shall see that  they are generated naturally in the reliability analysis.} For ease of exposition,  we denote by
\begin{equation*}
    \eta^2_h(T) = \eta_T^2 + \frac{1}{2} \sum_{F \in \partial T \cap\Omega}\eta_F^2\,,
\end{equation*}
the residual-type error indicator associated with an element $T$. We should note that there are no face residuals on the boundary of domain since the state equation satisfies the homogeneous boundary condition. For the a posteriori error estimates and the convergence analysis, a lower-order data oscillation related to $\u^d$ is needed:
\begin{align*}
    \osc^2_h(\u ^d):=\sum_{T \in \mathscr{T}_h} \osc_{T}^2(\u^d)\,, \quad  \osc_{T}(\u^d):=\norm{\u^d-\u^d_h}_{0,T}\,, \ T \in \mathscr{T}_h.
\end{align*}
Some higher-order data oscillations  associated with $\y^d$ and $\f$ shall also be involved: 
\begin{align*}
    \osc^2_h(\y^d):=\sum_{T \in \mathscr{T}_h} \osc_{T}^2(\y^d)\quad \text{with} \quad \osc_{T}(\y ^d):=h_T\norm{\curl(\y ^d-\y_h^d)}_{0,T}\,,\  T \in \mathscr{T}_h\,,   
\end{align*}
and 
\begin{align*}
    \osc^2_h(\f):= \sum_{T \in \mathscr{T}_h} \osc_{T}^2(\f)    
\end{align*}
with 
\begin{align*}
\osc_{T}(\f):= h_T\norm{\f-\f_h}_{\div,T} + \sum_{F \in \partial T\cap \Omega}h_F^{1/2}\norm{[\gamma_n(\f-\f_h)]_F}_{0,F}\,,\  T \in \mathscr{T}_h\,,
\end{align*}
where we assume that $\y_h^d \in \V_h$ and $\f_h \in \U_h$ are some approximations of $\y^d$ and $\f$, respectively. In contrast to the element residuals and face residuals associated with the discrete solutions, the data oscillation
$\osc_h(\u ^d)$ is typically of lower order for a non-smooth target control $\u^d$, and at most of $O(h)$, by the Poincar\'{e} inequality, even if $\u^d$ has a certain regularity.  Meanwhile, the data oscillations $\osc_h(\y^d)$ and $\osc_h(\f)$ are of the same order as the residuals, and shall be of higher order if the data
$\y^d$ and $\f$ have additional regularities. 
Therefore, we could replace $\y^d$ and $\f$ in the element residuals $\eta_{y,T}^{(i)},\ i = 1, 2$ and $\eta_{p,T}^{(1)}$, as well as some face residuals, with $\y_h^d$ and $\f_h$ for ease of implementation without any influence on the performance of the algorithm and any essential change of the analysis.  

As we shall see, since the inconsistent discrete spaces are used and no additional regularity assumptions are added on the data, the lower-order data oscillations may have a significant contribution to the total error so that a single residual-type error estimator $\eta_h$ is not enough to capture the error distribution accurately. Therefore, to design an efficient adaptive algorithm for our problem, it is necessary to take into account the data oscillations \cite{gaevskaya2007convergence,hintermuller2008posteriori}. 
In view of this, we introduce a new mixed error indicator, by incorporating the lower-order data oscillation $\osc_T(\u^d)$ into the  residual-type error estimator $\eta_h$,
\begin{equation} \label{eq:toterrind}
    \h{\eta}_h^2 = \sum_{T \in \mathscr{T}_h} \h{\eta}^2_h(T)   \quad \text{with} \quad \h{\eta}^2_h(T) =  \eta^2_h(T) + \osc^2_T(\u^d)\,.
\end{equation}
We are now in a position to present the algorithm (see Algorithm \ref{alg:Framwork} below), based on the error estimator $\h{\eta}_h$. \mb{In what follows}, we shall use the iteration index $k$ to indicate the dependence of a variable or a quantity on a  particular mesh generated in the adaptive process, \mb{for instance}, we write $\eta_k(T)$ 
for $\eta_h(T)$
to emphasize that we are considering the error estimator $\eta_h(T)$ on the mesh $\T{k}$. 
\begin{algorithm}
\caption{ Adaptive edge element method }
\label{alg:Framwork}
\begin{algorithmic}[1] 
\STATE Specify a shape regular initial mesh $\mathscr{T}_0$ on $\Omega$ and set the iteration index $k:=0$.
\label{code:initial}
\STATE ({\bf SOLVE}) Compute the numerical solution $(\yk,\pk\,\uk)$ to the discrete optimality system \eqref{kkt2:1}-\eqref{kkt2:3} on the mesh $\mathscr{T}_k$.
\label{solve}
\STATE ({\bf ESTIMATE}) Compute the error estimator $\h{\eta}_k(T)$ defined in \eqref{eq:toterrind} 
for each element $T \in \T{k}$.\label{estimate}

\STATE ({\bf MARK}) Mark a subset $\mathscr{M}_k \subset \mathscr{T}_k$ containing 
at least one element $\widetilde{T}_k$ that satisfies
\begin{equation} \label{eq:algocond}
    \h{\eta}_k(\widetilde{T}_k) = \max_{T\in\mathscr{T}_k} \h{\eta}_k(T)\,.
\end{equation} 
\label{mark}
\STATE ({\bf REFINE}) Refine elements in $\mathscr{M}_k$
and \mb{other necessary elements by bisection 
to generate the smallest conforming mesh $\mathscr{T}_{k+1}$ with $\mathscr{T}_{k+1} \cap \mathscr{M}_k = \varnothing$}. \label{refine}
\STATE Set $k = k + 1$ and go to Step $2$ until the preset stopping criterion is met.
\label{code:fram:stop}
\end{algorithmic}
\end{algorithm}

Several further remarks concerning each module in Algorithm \ref{alg:Framwork} are in order. First, in the module {\bf SOLVE}, we are required to solve a large-scale quadratic optimization problem with discretized PDE-constraints efficiently. It is nowadays a very active research area, and many high-performance algorithms and preconditioners have been developed for this purpose (cf.\,\cite{pearson2012new}\cite{pearson2012regularization}\cite{pearson2017fast}\cite{stoll2015low}\cite{bunger2020low}). 

Second, to guarantee the convergence of the algorithm, we only need the condition \eqref{eq:algocond} in the module {\bf MARK} 
that the marked elements contain an element with the largest error indicator, which can be met by 
many popular marking strategies, such as the maximum strategy \cite{babuvvska1978error}, the equidistribution strategy \cite{eriksson1991adaptive} and the D\"{o}rfler's strategy \cite{dorfler_convergent_1996}.
In our numerical experiments (see Section \ref{experiment}), we shall use the D\"{o}rfler's strategy to select the marked elements 
$\mathscr{M}_k$ with minimal cardinality such that for a given $\theta \in (0,1)$, there holds
\begin{align}
\sum\limits_{T\in \mathscr{M}_k} \h{\eta}^2_k(T) \geq \theta \sum\limits_{T\in \mathscr{T}_k} \h{\eta}^2_k(T)\,. \label{mark1}
\end{align}

Third, we only include the data oscillation $\osc_{h}(\u^d)$ in the definition of $\h{\eta}_h$ since it is 
a lower-order term that may provide a dominant error contribution among all the data oscillations. On the other hand, the behavior of the optimal control $\u^*$ relies largely on the properties of 
the obstacle function $\boldsymbol{\psi}$, 
whose information is further transferred to $\u^d$. We hence expect that the approximation ability of $\U_h$ associated with the current mesh $\T{h}$ for $\u^d$ can directly influence
the performance of our AFEM; see also \cite{hintermuller2008posteriori}\cite{gaevskaya2007convergence} for related discussions.
In the case where $\boldsymbol{\psi}$ is a constant function and $\u^d = 0$, $\osc_h(\u^d)$
vanishes and $\h{\eta}_h$ becomes the standard residual-type error estimator $\eta_h$.

Finally, for the module {\bf REFINE}, \mb{all elements of $\mathscr{M}_k$ are bisected at least once}, and some additional elements in $\mathscr{T}_k \backslash \mathscr{M}_k$ may also need to be subdivided in order to generate a sequence of uniformly shape regular and conforming meshes $\{\mathscr{T}_k\}_{k \ge 0}$;
see \cite{cascon2008quasi} and \cite[Section 4]{nochetto2009theory}
for the detailed mesh refinement algorithm and the necessary assumptions on the initial triangulation $\T{0}$.
Such a refinement process ensures that all the generic constants involved in the inequalities below depend only on the shape regularity of the initial mesh and the given data.

We shall next present the a posteriori error analysis based on the error estimator $\h{\eta}_h$ \eqref{eq:toterrind}, including both the reliability and efficiency estimates, which is similar in spirit to the ones given in \cite{hoppe2015adaptive}\cite{schoberl2008posteriori}, but with several 
main difficulties and differences as stated in the introduction, especially those caused by the inconsistency between the discrete spaces of the state and control.


\subsection{Reliability}
In this section, we show that the error estimator $\h{\eta}_h$ is reliable in the sense that it can provide an upper bound for the total error between the true solution and the numerical solution: 
$$
\norm{\y_h^*-\y^*}_{\curl,\Om}+\norm{\p^*_h-\p^*}_{\curl,\Om}+\norm
{\u^*_h-\u^*}_{0,\Om}\,.
$$
For this purpose, \mb{following \cite{schoberl2008posteriori}}, we introduce a helpful quasi-interpolation operator $\Pi_h$. \mb{We start with the definition} of the extended neighborhood $\widetilde{\Om}_T$ for an element $T\in \mathscr{T}_h$ and fix some notations. Recall that the neighborhood $\Omega_\v$ of a vortex \v \  is defined as the union of all the elements that contain the vertex \v,\  and the extended neighborhood of a vertex $\v$ is given by  $\widetilde{\Om}_\v:=\bigcup_{\v'\in \Om_\v}\Om_{\v'}$. We define the extended neighborhood of an element $T$ by $\widetilde{\Om}_T=\bigcup_{\v\in T}\widetilde{\Om}_\v$. \mb{An important consequence of the uniformly shape regularity of $\{\mathscr{T}_h\}$ is that the cardinality of $\widetilde{\Om}_T$ is uniformly bounded (cf.\,\cite[Section 4.3]{nochetto2009theory}): 
\begin{align}  \label{eq:localuni}
    \max_{T \in \mathscr{T}_h} \# \widetilde{\Om}_T \le C(\mathscr{T}_0)\,.
\end{align} 
The corresponding converse fact is that the collection of extended neighborhoods $\widetilde{\Omega}_T,\  T \in \mathscr{T}_h$, covers each element in $\mathscr{T}_h$ finite times uniformly:  
\begin{align} \label{eq:localuni2}
    \max_{T \in \mathscr{T}_h} \# \{T' \in \mathscr{T}_h\,;\ T \in \widetilde{\Om}_{T'}\} \le C(\mathscr{T}_0)\,.
\end{align}
Here the constants $C(\mathscr{T}_0)$ only depend on the initial mesh $\mathscr{T}_0$.}
\begin{lem}{\cite[Theorem 1]{schoberl2008posteriori}}\label{interpo}
 There exists an interpolation operator $\Pi_h:\textit{\textbf{H}}_0(\curl,\Om)\rightarrow \V_h$ such that for any $\u \in \textit{\textbf{H}}_0(\curl,\Om)$, $\u-\Pi_h\u $ has the decomposition:
 \begin{equation*}
 \u-\Pi_h\u = \nabla \varphi + \z\,,
 \end{equation*}
 with $\varphi \in H_0^1(\Om),\z \in \textit{\textbf{H}}^1_0(\Om)$. \mb{Moreover, the following estimates hold}
 \begin{align}
 h_T^{-1}\norm{\varphi}_{0,T}+\norm{\nabla \varphi}_{0,T} &\lesssim \norm{\u}_{0,\widetilde{\Om}_T}\,, \label{inp:es1}\\
 h_T^{-1}\norm{\z}_{0,T}+\norm{\nabla \z}_{0,T} &\lesssim \norm{\curl\u}_{0,\widetilde{\Om}_T}\,. \label{inp:es2}
 \end{align}
\end{lem}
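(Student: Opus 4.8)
The plan is to realize $\Pi_h$ as a Cl\'{e}ment-type quasi-interpolation that is stable in $\textit{\textbf{L}}^2$, reproduces $\V_h$, and fits into a commuting diagram with a scalar nodal averaging operator $I_h^0$ onto the continuous piecewise linear space, i.e.\ $\nabla I_h^0 = \Pi_h\nabla$; such an operator can be built from averaged (rather than pointwise) degrees of freedom on vertex patches, as in \cite{schoberl2008posteriori}. The reason the two estimates \eqref{inp:es1}--\eqref{inp:es2} \emph{separate} is structural: the commuting property lets me control the interpolation error $\e:=\u-\Pi_h\u$ locally by $\u$ in $\textit{\textbf{L}}^2$ and by $\curl\u$ in its curl, after which a patchwise regular decomposition converts these two independent bounds into the required gradient/regular splitting. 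The whole argument is local on the patches $\widetilde{\Om}_T$, so that \emph{no} global regularity of $\Om$ is needed, only the shape regularity of $\{\mathscr{T}_h\}$.

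First I would record the two defining properties of $\Pi_h$. The local $\textit{\textbf{L}}^2$-stability $\norm{\Pi_h\u}_{0,T}\lesssim\norm{\u}_{0,\widetilde{\Om}_T}$ gives $\norm{\e}_{0,T}\lesssim\norm{\u}_{0,\widetilde{\Om}_T}$; and the commuting identity $\curl\,\Pi_h\u = R_h\,\curl\u$, where $R_h$ is the matching (face-element) quasi-interpolation of the curl, gives $\norm{\curl\e}_{0,T}\lesssim\norm{\curl\u}_{0,\widetilde{\Om}_T}$. Both bounds use only the uniform shape regularity and the finite-overlap properties \eqref{eq:localuni}--\eqref{eq:localuni2}, which let me pass from patchwise estimates to globally summable ones. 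Establishing that a single operator $\Pi_h$ enjoys $\textit{\textbf{L}}^2$-stability, $\V_h$-reproduction, \emph{and} the commuting diagram simultaneously is the technical heart of the construction.

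Next, on each extended patch $\widetilde{\Om}_T$ --- a shape-regular, contractible domain --- I would apply a local Helmholtz/regular decomposition to $\e$, writing $\e=\nabla\varphi_T+\z_T$ with $\nabla\varphi_T\perp\z_T$ in $\textit{\textbf{L}}^2(\widetilde{\Om}_T)$ and $\z_T$ divergence-free. Orthogonality yields $\norm{\nabla\varphi_T}_{0,\widetilde{\Om}_T}\le\norm{\e}_{0,\widetilde{\Om}_T}\lesssim\norm{\u}_{0,\widetilde{\Om}_T}$ with no contribution from the curl, while the embedding of $\textit{\textbf{H}}_0(\curl)$-fields that are divergence-free into $\textit{\textbf{H}}^1$ on the nice patch (cf.\,\cite{amrouche1998vector}) gives $\norm{\z_T}_{1,\widetilde{\Om}_T}\lesssim\norm{\curl\z_T}_{0,\widetilde{\Om}_T}=\norm{\curl\e}_{0,\widetilde{\Om}_T}\lesssim\norm{\curl\u}_{0,\widetilde{\Om}_T}$, with constants depending only on shape regularity. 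The scaled $L^2$-terms $h_T^{-1}\norm{\varphi}_{0,T}$ and $h_T^{-1}\norm{\z}_{0,T}$ then follow from Poincar\'{e}/scaling after normalizing the additive freedom in $\varphi_T$ and $\z_T$.

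Finally I would assemble the local fields into global $\varphi\in H_0^1(\Om)$ and $\z\in\textit{\textbf{H}}_0^1(\Om)$ using a partition of unity subordinate to the vertex patches, summing the local estimates via the finite-overlap bounds; the homogeneous boundary conditions are inherited because $\u\in\textit{\textbf{H}}_0(\curl,\Om)$ and $\Pi_h$ preserves the zero tangential trace. I expect the main obstacle to lie precisely in this separation-plus-gluing step: the partition-of-unity commutators produce cross terms that mix $\norm{\u}_{0}$ and $\norm{\curl\u}_{0}$, and to keep \eqref{inp:es1} free of the curl and \eqref{inp:es2} free of $\u$ one must reabsorb these cross terms into the $\textit{\textbf{L}}^2$-controlled gradient part using the $O(h)$ local approximation of $\Pi_h$. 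Together with the construction of a commuting, $\textit{\textbf{L}}^2$-stable $\Pi_h$, this is where the real work sits; the remaining estimates are routine scaling and summation.
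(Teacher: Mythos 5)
The paper does not actually prove this lemma; it is imported verbatim from \cite[Theorem 1]{schoberl2008posteriori}, so there is no in-paper argument to compare against, and your sketch has to be measured against the proof in that reference. Your overall architecture --- a locally $L^2$-stable commuting quasi-interpolant combined with a local decomposition on the extended patches and a partition-of-unity assembly --- is the right skeleton, but two of your steps have genuine gaps. First, you derive the $\textit{\textbf{H}}^1$-bound on the solenoidal part $\z_T$ from the embedding of divergence-free $\textit{\textbf{H}}_0(\curl)$-fields into $\textit{\textbf{H}}^1$ on the patch. That embedding requires convexity or $C^{1,1}$-regularity of the domain (the paper itself invokes exactly this hypothesis in Proposition \ref{prop:reg}); an extended neighbourhood $\widetilde{\Om}_T$ of a shape-regular tetrahedral mesh is in general neither, so the orthogonal Helmholtz splitting does not yield $\norm{\z_T}_{1,\widetilde{\Om}_T}\lesssim\norm{\curl\z_T}_{0,\widetilde{\Om}_T}$ with a constant depending only on shape regularity. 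What is actually needed is a \emph{regular} (Birman--Solomyak-type) decomposition built from regularized Poincar\'{e}/Bogovskii integral operators on star-shaped sets, in which the two components are neither orthogonal nor is $\z_T$ divergence-free; that is the device that works on arbitrary shape-regular patches.

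Second, and more seriously, the gluing. You correctly identify the partition-of-unity cross terms $\varphi_V\nabla\chi_V$ as the obstacle to keeping \eqref{inp:es1} free of $\curl\u$ and \eqref{inp:es2} free of $\u$, but your proposed fix --- reabsorbing them into the $L^2$-controlled gradient part --- cannot work: $\varphi_V\nabla\chi_V$ is not a gradient, so it must be assigned to $\z$, and its natural bound is by $\norm{\u}_{0}$, which would contaminate \eqref{inp:es2}. The argument in \cite{schoberl2008posteriori} avoids this by decomposing $\u$ itself (not the error) locally as $\nabla p_V+\w_V$, transferring the decomposition to the interpolation error through the commuting property $\Pi_h\nabla p=\nabla I_h p$, and exploiting that the cross terms only involve differences $p_V-p_{V'}$ on patch overlaps, whose gradients equal $\w_{V'}-\w_V$ and are therefore controlled by $\norm{\curl\u}$ alone. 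Without this mechanism (or an equivalent one) the separation of the two estimates --- which is the entire content of the lemma --- is not established, so as written the proposal does not close.
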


Since the optimal state $\y^*$ and adjoint state $\p^*$ satisfy the coupled system \eqref{kkt1:1}-\eqref{kkt1:3}, the Galerkin orthogonality, which is important for the a posteriori error analysis for the linear boundary value problems, does not hold here. To compensate it, we introduce the intermediate state and adjoint state, $\y(\u_h^*)$ and $\p(\u_h^*)$, by the equations:
\begin{align}
&B(\y(\u_h^*),\phii)=(\f+\u_h^*,\phii)  \qquad \qquad  \quad \quad \ \ \,  \forall \ \phii \in \V\,, \label{intermi:1}\\
&B(\p(\u_h^*),\boldsymbol {\psi})=(\curl\y(\u_h^*)-\y^d,\curl\boldsymbol {\psi})  \quad \forall \ \boldsymbol {\psi} \in \V\,. \label{intermi:2}
\end{align}
If we use the variational discretization for the control variable, we can derive the following error equivalence (cf.\,\cite{gong2017adaptive}\cite{hinze2005variational}):
\begin{equation*}
    \norm{\y(\u_h^*)-\y_h^*}_{\curl,\Om}+\norm{\p(\u_h^*)-\p_h^*}_{\curl,\Om} \approx \norm{\y_h^*-\y^*}_{\curl,\Om}+\norm{\p^*_h-\p^*}_{\curl,\Om}+\norm{\u^*_h-\u^*}_{0,\Om}\,,
\end{equation*}
which allows us to directly conclude the reliability of the error estimator from the known result concerning Maxwell's equations \cite[Corollary 2]{schoberl2008posteriori}. If we use the edge element discretization for the control as in \cite{hoppe2015adaptive}, the above error equivalence still holds, up to the data oscillation $\osc_h(\u^d)$. However, this is not the case in our algorithm since the discrete spaces of the control and state variables are different. Instead, we have the following result.
\begin{lem} \label{lem:forreal}
    Let the triplets $(\y^*,\p^*,\u^*)$ and $(\y_h^*,\p_h^*,\u_h^*)$ be the solutions to \eqref{kkt1:1}-\eqref{kkt1:3} and
    \eqref{kkt2:1}-\eqref{kkt2:3}, respectively. Then it holds that
    \begin{align*}
        &\norm{\y_h^*-\y^*}_{\curl,\Om} + \norm{\p^*_h-\p^*}_{\curl,\Om} +\norm{\u^*_h-\u^*}_{0,\Om}  \\
        \lesssim &\norm{\y(\u_h^*)-\y_h^*}_{\curl,\Om}+\norm{\p(\u_h^*)-\p_h^*}_{\curl,\Om} + \norm{\mathbb{P}_h\p_h^*-\p_h^*}_{0,\Om} + \osc_h(\u^d)\,.
    \end{align*}
\end{lem}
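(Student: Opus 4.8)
The plan is to reduce the whole estimate to a bound on the control error $\norm{\u_h^*-\u^*}_{0,\Om}$ and then recover the state and adjoint state errors by the triangle inequality. Writing $\y^*=\y(\u^*)$ and $\p^*=\p(\u^*)$, the intermediate fields $\y(\u_h^*),\p(\u_h^*)$ defined in \eqref{intermi:1}-\eqref{intermi:2} are exactly the continuous state and adjoint driven by the discrete control $\u_h^*$. Since $B$ is continuous and coercive, the control-to-state and state-to-adjoint solution maps are Lipschitz, so that
\begin{equation*}
\norm{\y^*-\y(\u_h^*)}_{\curl,\Om}+\norm{\p^*-\p(\u_h^*)}_{\curl,\Om}\lesssim \norm{\u^*-\u_h^*}_{0,\Om}.
\end{equation*}
Splitting $\y^*-\y_h^*=(\y^*-\y(\u_h^*))+(\y(\u_h^*)-\y_h^*)$ and likewise for the adjoint shows that once the control error is controlled by the right-hand side, so are the two $\curl$-norm errors, since $\norm{\y(\u_h^*)-\y_h^*}_{\curl,\Om}$ and $\norm{\p(\u_h^*)-\p_h^*}_{\curl,\Om}$ already appear there.

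The heart of the argument is the control bound, which I would obtain by testing the two variational inequalities against each other: take $\u=\u_h^*$ in \eqref{kkt1:3} (admissible since $\U_h^{ad}\subset\U^{ad}$) and $\u_h=\mathbb{P}_h\u^*$ in \eqref{kkt2:3} (admissible since $\u^*\ge\0$ and the cell-average \eqref{averoper} preserves nonnegativity, so $\mathbb{P}_h\u^*\ge\0$), then add and rearrange so that $\alpha\norm{\u^*-\u_h^*}_{0,\Om}^2$ is isolated on the left. Two structural identities then clean up the right-hand side. First, the discrete residual $\llambda_h^*=\mathbb{P}_h\p_h^*+\alpha(\u_h^*-\u_h^d)$ lies in $\U_h$, so by the $L^2$-orthogonality of $\mathbb{P}_h$ the cross term $(\llambda_h^*,\mathbb{P}_h\u^*-\u^*)_{0,\Om}$ vanishes; this is where the inconsistency between the discrete spaces is absorbed. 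Second, testing the defining equations of $\delta\y:=\y(\u_h^*)-\y^*$ and $\delta\p:=\p(\u_h^*)-\p^*$ against each other and using the symmetry of $B$ yields the sign condition
\begin{equation*}
(\p(\u_h^*)-\p^*,\u^*-\u_h^*)_{0,\Om}=-\norm{\curl(\y(\u_h^*)-\y^*)}_{0,\Om}^2\le 0,
\end{equation*}
which lets me discard an otherwise troublesome term. Finally the data contribution collapses to $\osc_h(\u^d)$ because $\u_h^d=\mathbb{P}_h\u^d$ and $\osc_h(\u^d)=\norm{\u^d-\u_h^d}_{0,\Om}$.

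Assembling these, the combined inequality reduces to $\alpha\norm{\u^*-\u_h^*}_{0,\Om}^2\lesssim (\mathbb{P}_h\p_h^*-\p(\u_h^*),\u^*-\u_h^*)_{0,\Om}+\alpha(\u^d-\u_h^d,\u^*-\u_h^*)_{0,\Om}$, and dividing by $\norm{\u^*-\u_h^*}_{0,\Om}$ and splitting $\mathbb{P}_h\p_h^*-\p(\u_h^*)=(\mathbb{P}_h\p_h^*-\p_h^*)+(\p_h^*-\p(\u_h^*))$ gives
\begin{equation*}
\norm{\u^*-\u_h^*}_{0,\Om}\lesssim \norm{\mathbb{P}_h\p_h^*-\p_h^*}_{0,\Om}+\norm{\p_h^*-\p(\u_h^*)}_{\curl,\Om}+\osc_h(\u^d).
\end{equation*}
The main obstacle is precisely the circularity one meets on the naive route: using the nonexpansiveness of $\mathbb{P}_{\U^{ad}}$ together with the projection formulas \eqref{eq:conproj} and \eqref{eq:disproj} produces the term $\norm{\p^*-\mathbb{P}_h\p_h^*}_{0,\Om}$, which contains $\norm{\p^*-\p_h^*}_{0,\Om}$, part of the very quantity being estimated. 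The device that breaks the loop is to insert the intermediate adjoint $\p(\u_h^*)$ and to invoke the sign condition above, trading the non-computable term for the genuinely local $\norm{\p_h^*-\p(\u_h^*)}_{\curl,\Om}$ and $\norm{\mathbb{P}_h\p_h^*-\p_h^*}_{0,\Om}$ together with the oscillation. Combining this control bound with the reduction of the first paragraph then finishes the proof, the hidden constant depending only on $\alpha$ and the continuity and coercivity constants of $B$.
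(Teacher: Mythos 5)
Your proof is correct, and its skeleton matches the paper's: reduce everything to the control error via the Lipschitz stability of the solution maps, then obtain the control bound by combining an inequality involving $\p^*-\mathbb{P}_h\p_h^*$ with the sign identity $(\p(\u_h^*)-\p^*,\u^*-\u_h^*)_{0,\Om}=-\norm{\curl(\y(\u_h^*)-\y^*)}_{0,\Om}^2\le 0$ and the splitting $\mathbb{P}_h\p_h^*-\p^* = (\mathbb{P}_h\p_h^*-\p_h^*)+(\p_h^*-\p(\u_h^*))+(\p(\u_h^*)-\p^*)$. Where you genuinely diverge is in how the key intermediate inequality $\alpha\norm{\u^*-\u_h^*}_{0,\Om}^2\le(\u^*-\u_h^*,\mathbb{P}_h\p_h^*-\p^*+\alpha(\u^d-\u_h^d))_{0,\Om}$ is produced: you cross-test the two variational inequalities \eqref{kkt1:3} and \eqref{kkt2:3} with $\u=\u_h^*$ and $\u_h=\mathbb{P}_h\u^*$ (admissible since cell averaging preserves nonnegativity) and kill the mismatch term via $(\llambda_h^*,(\mathbb{I}-\mathbb{P}_h)\u^*)_{0,\Om}=0$, whereas the paper gets the same inequality in one line from the projection representations \eqref{eq:conproj} and \eqref{eq:disproj} together with the firm contraction property $\norm{\mathbb{P}_{\U^{ad}}v-\mathbb{P}_{\U^{ad}}w}^2\le(\mathbb{P}_{\U^{ad}}v-\mathbb{P}_{\U^{ad}}w,v-w)$. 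The two mechanisms are equivalent (the contraction property is itself proved by exactly your cross-testing), but your version makes the role of the inconsistent discrete spaces explicit, at the cost of an extra admissibility check and the orthogonality observation; the paper's version is shorter and, as it emphasizes in the introduction, deliberately avoids manipulating the discrete VI so as to sidestep the complementarity issues of \cite{hoppe2015adaptive}. One small quibble: you describe the nonexpansiveness route as circular because it produces $\norm{\p^*-\mathbb{P}_h\p_h^*}_{0,\Om}$, but the paper shows it is not --- the fix is to keep that term inside the inner product against $\u^*-\u_h^*$ and only then insert $\p(\u_h^*)$ and apply the sign condition, which is precisely the device you use in your own route.
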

\begin{proof}
    By the well-posedness of the $\textit{\textbf{H}}(\curl)$-elliptic variational problem, we have
\begin{align}
    &\norm{\y ^*-\y(\u_h^*)}_{\curl,\Om}\lesssim\norm{\u-\u^*_h}_{0,\Om}\,, \label{eq:estiner1}\\
    &\norm{\p^*-\p(\u^*_h)}_{\curl,\Om}\lesssim\norm{\curl\y^*-\curl\y(\u^*_h)}_{0,\Om}\lesssim \norm{\u-\u^*_h}_{0,\Om}\,, \label{eq:estiner2}
\end{align}
which, combined with the \mb{triangle inequality}, reduce the proof of the lemma to the estimate of $\norm{\u^*_h-\u^*}_{0,\Om} $.
In view of \eqref{eq:conproj} and \eqref{eq:disproj}, we get
\begin{equation} \label{eq:erruc}
    \norm{\u^* - \u_h^*}^2_{0,\Omega} \le \Big(\u^* - \u^*_h, -\frac{\p^*-\mathbb{P}_h \p_h^*}{\alpha} + \u^d - \u_h^d\Big)_{0,\Omega}\,,
\end{equation}
by the contraction property of $L^2$-projections \cite[Proposition 5.3]{brezis2010functional}. Moreover,
we can deduce, by taking $\phii = \p^*-\p(\u_h^*)$ in (\ref{intermi:1}) and $\boldsymbol {\psi} = \y^* - \y(\u_h^*)$ in (\ref{intermi:2}), that 
\begin{equation}\label{relation}
(\u^*-\u_h^*,\p^*-\p(\u_h^*))_{0,\Omega} =\norm{\curl\y^*-\curl\y(\u_h^*)}_{0,\Omega}^2 \geq 0\,.
\end{equation}
Combining \eqref{relation} with \eqref{eq:erruc} helps us obtain
\begin{align*}
    \norm{\u_h^*-\u^*}_{0,\Om}\lesssim \norm{\mathbb{P}_h\p_h^*-\p_h^*}_{0,\Om} + \norm{\p_h^*-\p(\u_h^*)}_{0,\Om} +  \norm{\u^d_h-\u^d}_{0,\Om}\,,
   \end{align*}
which completes the proof of the lemma.
\end{proof}
With the above preparations, we are now ready to prove the reliability of the error estimator. 

%
%
\begin{thm}\label{reliability}
    Let the triplets $(\y^*,\p^*,\u^*)$ and $(\y_h^*,\p_h^*,\u_h^*)$ be the solutions to the continuous and discrete optimality systems \eqref{kkt1:1}-\eqref{kkt1:3} and
    \eqref{kkt2:1}-\eqref{kkt2:3}, respectively. Then we have the following reliability estimate:
 \begin{equation}\label{eq:reliability}
 \norm{\y_h^*-\y^*}_{\curl,\Om}+\norm{\p^*_h-\p^*}_{\curl,\Om}+\norm{\u^*_h-\u^*}_{0,\Om}
    \lesssim \h{\eta}_h\,.
 \end{equation}
\end{thm}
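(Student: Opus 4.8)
The plan is to start from Lemma \ref{lem:forreal}, which has already reduced the full error on the left-hand side of \eqref{eq:reliability} to the four quantities $\norm{\y(\u_h^*)-\y_h^*}_{\curl,\Om}$, $\norm{\p(\u_h^*)-\p_h^*}_{\curl,\Om}$, $\norm{\mathbb{P}_h\p_h^*-\p_h^*}_{0,\Om}$ and $\osc_h(\u^d)$. Two of these are immediate: by definition $\norm{\mathbb{P}_h\p_h^*-\p_h^*}_{0,\Om}^2=\sum_{T}(\eta_{p,T}^{(3)})^2$ and $\osc_h^2(\u^d)=\sum_T\osc_T^2(\u^d)$, so both are bounded by $\h{\eta}_h$ directly from the definition \eqref{eq:toterrind}. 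It therefore remains only to bound the intermediate state and adjoint errors by the residual part of the estimator.

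For the state error I would exploit the key structural fact that the intermediate state $\y(\u_h^*)$ in \eqref{intermi:1} and the discrete state $\y_h^*$ in \eqref{kkt2:1} are driven by the \emph{same} data $\f+\u_h^*$; hence $\y_h^*$ is exactly the Galerkin projection of $\y(\u_h^*)$ onto $\V_h$ and the Galerkin orthogonality $B(\y(\u_h^*)-\y_h^*,\phii_h)=0$ holds for all $\phii_h\in\V_h$. This puts us in the standard setting of residual-based a posteriori estimation for the $\textit{\textbf{H}}(\curl)$-elliptic problem, so I would follow the argument of Sch\"{o}berl: writing $\norm{\e}_B^2=B(\e,\e-\Pi_h\e)$ for $\e=\y(\u_h^*)-\y_h^*$, insert the regular decomposition $\e-\Pi_h\e=\nabla\varphi+\z$ furnished by Lemma \ref{interpo}, and integrate by parts element-wise. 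The gradient part $\nabla\varphi$ annihilates the curl term and generates the divergence volume residual $\eta_{y,T}^{(2)}$ together with the normal-jump face residual $\eta_{y,F}^{(2)}$, while the regular part $\z$ generates the curl volume residual $\eta_{y,T}^{(1)}$ and the tangential-jump residual $\eta_{y,F}^{(1)}$. Bounding the local factors via \eqref{inp:es1}--\eqref{inp:es2} and scaled trace inequalities, invoking the finite-overlap property \eqref{eq:localuni2} in a Cauchy--Schwarz step, and dividing by $\norm{\e}_{\curl,\Om}$ (equivalent to $\norm{\e}_B$) yields $\norm{\y(\u_h^*)-\y_h^*}_{\curl,\Om}\lesssim\h{\eta}_h$.

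For the adjoint error the situation is subtler, and this is where I expect the main obstacle to lie: the intermediate adjoint $\p(\u_h^*)$ is sourced by $\curl\y(\u_h^*)-\y^d$ while the discrete adjoint $\p_h^*$ is sourced by $\curl\y_h^*-\y^d$, so $\p_h^*$ is \emph{not} the Galerkin projection of $\p(\u_h^*)$ and the clean Galerkin orthogonality fails. I would remedy this by introducing the auxiliary continuous adjoint $\hat\p\in\V$ solving $B(\hat\p,\boldsymbol\psi)=(\curl\y_h^*-\y^d,\curl\boldsymbol\psi)$ for all $\boldsymbol\psi\in\V$; now $\p_h^*$ \emph{is} the Galerkin projection of $\hat\p$, and the adjoint residuals $\eta_{p,T}^{(1)},\eta_{p,T}^{(2)},\eta_{p,F}^{(1)},\eta_{p,F}^{(2)}$ are precisely the residuals of this problem (here one uses $\curl\curl\y_h^*=0$ element-wise for the lowest-order elements, which is exactly why $\eta_{p,T}^{(1)}$ takes its stated form). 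The same Sch\"{o}berl argument then gives $\norm{\hat\p-\p_h^*}_{\curl,\Om}\lesssim\h{\eta}_h$. The remaining defect is controlled by well-posedness: subtracting the two continuous adjoint equations gives $B(\p(\u_h^*)-\hat\p,\boldsymbol\psi)=(\curl(\y(\u_h^*)-\y_h^*),\curl\boldsymbol\psi)$, whence $\norm{\p(\u_h^*)-\hat\p}_{\curl,\Om}\lesssim\norm{\y(\u_h^*)-\y_h^*}_{\curl,\Om}\lesssim\h{\eta}_h$ by the previous step. A triangle inequality then bounds $\norm{\p(\u_h^*)-\p_h^*}_{\curl,\Om}$ by $\h{\eta}_h$, and assembling all four pieces in Lemma \ref{lem:forreal} completes the proof of \eqref{eq:reliability}.
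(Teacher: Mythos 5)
Your proposal is correct, and its overall architecture coincides with the paper's: reduce via Lemma \ref{lem:forreal} to the two intermediate errors $\norm{\y(\u_h^*)-\y_h^*}_{\curl,\Om}$ and $\norm{\p(\u_h^*)-\p_h^*}_{\curl,\Om}$ (the terms $\norm{\mathbb{P}_h\p_h^*-\p_h^*}_{0,\Om}$ and $\osc_h(\u^d)$ being part of $\h{\eta}_h$ by definition), then run the Sch\"{o}berl quasi-interpolation argument with the regular decomposition $\nabla\varphi+\z$ and elementwise integration by parts. Your treatment of the state error is exactly the paper's.

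The one place you genuinely diverge is the adjoint. The paper does \emph{not} introduce your auxiliary adjoint $\hat{\p}$; it keeps $\e_{\p}=\p(\u_h^*)-\p_h^*$ throughout, splits $B(\e_{\p},\e_{\p})=B(\e_{\p},\e_{\p}-\Pi_h\e_{\p})+B(\e_{\p},\Pi_h\e_{\p})$, and absorbs the failure of Galerkin orthogonality by bounding both the non-orthogonality term $B(\e_{\p},\Pi_h\e_{\p})$ and the data mismatch $\curl\y(\u_h^*)-\curl\y_h^*$ appearing in the residual directly by $\norm{\y_h^*-\y(\u_h^*)}_{\curl,\Om}\,\norm{\e_{\p}}_{\curl,\Om}$, which the already-proved state bound controls. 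Your route --- defining $\hat{\p}\in\V$ with source $\curl\y_h^*-\y^d$ so that $\p_h^*$ becomes its exact Galerkin projection, estimating $\norm{\hat{\p}-\p_h^*}_{\curl,\Om}\lesssim\eta_h$ by the clean orthogonal argument (correctly noting that $\eta_{p,T}^{(i)}$ and $\eta_{p,F}^{(i)}$ are precisely the residuals of this auxiliary problem, using $\curl\curl\y_h^*=0$ elementwise), and then controlling $\norm{\p(\u_h^*)-\hat{\p}}_{\curl,\Om}\lesssim\norm{\y(\u_h^*)-\y_h^*}_{\curl,\Om}$ by well-posedness and concluding with a triangle inequality --- is equally valid and delivers the same constants up to equivalence. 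What your version buys is conceptual cleanliness: the a posteriori machinery is only ever applied in a setting with genuine Galerkin orthogonality, and the perturbation is isolated in a single stability estimate. What the paper's version buys is brevity: no extra auxiliary function, at the cost of carrying the mismatch term through the integration-by-parts computation. Either way the conclusion \eqref{eq:reliability} follows.
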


\begin{proof}
    By Lemma\,\ref{lem:forreal}, it suffices to estimate $\norm{\y(\u_h^*)-\y_h^*}_{\curl,\Om}+\norm{\p(\u_h^*)-\p_h^*}_{\curl,\Om}$ to obtain the reliability estimate \eqref{eq:reliability}.
    We first consider the estimate for the state variable $\y$. Let $\e_{\y}$ be $ \y(\u_h^*)-\y_h^*$, and recall the norm 
    equivalence:  $\norm{\v}_B
    \approx \norm{\v}_{\curl,\Om}$. We can derive,
    by the Galerkin orthogonality,
\begin{align} \label{eq:mid_pro}
\norm{\e_{\y}}_{\curl,\Om}^2 &\approx B(\e_{\y} ,\e_{\y}-\Pi_h \e_{\y}) =(\f+\u_h^*-\sigma\y_h^*,\e_{\y}-\Pi_h \e_{\y})_{0,\Omega} - (\mu^{-1} \curl\y_h^*,\curl(\e_{\y}-\Pi_h \e_{\y}))_{0,\Omega}\,. 
\end{align}
A direct application of Lemma \ref{interpo} gives us the decomposition: $ \e_{\y}-\Pi_h \e_{\y} = \nabla \varphi + \z$ with $\varphi \in H_0^1(\Om),\z \in \textit{\textbf{H}}^1_0(\Om)$. Substituting it into \eqref{eq:mid_pro} and using  integration by parts for each $T$, we have 
\begin{align}\label{reab:1}
&(\f+\u_h^*-\sigma\y_h^*,\nabla \varphi + \z)_{0,\Omega} -(\mu^{-1} \curl\y_h^*,\curl(\nabla \varphi +\z))_{0,\Omega} \notag \\
=&\sum_{T\in\mathscr{T}_h}-(\div \f ,\varphi)_{0,T}+(\f+\u_h^*-\sigma\y_h^*-\curl \mu^{-1} \curl \y^*_h,\z)_{0,T}+(\div(\sigma \y_h^*),\varphi)_{0,T} \notag \\
& +\sum_{ F \in \mathscr{F}_h}([\gamma_n(\f+\u^*_h-\sigma\y_h^*)]_F,\varphi)_{0,F}+([\gamma_t(\mu^{-1} \curl \y_h^*)]_F,\z)_{0,F}\,.
\end{align}
To proceed, \mb{by the scaled trace inequality \cite[Corollary 6.1]{nochetto2009theory}:}
\begin{equation} \label{auxeq_trace}
\mb{\norm{w}_{0,F} \lesssim h_F^{-1/2}\norm{w}_{0,T} + h_F^{1/2} \norm{\nabla w}_{0,T} \quad \text{for} \ F \in \partial T,\ w \in H^1(T)\,,}
\end{equation}
and estimates (\ref{inp:es1}) and (\ref{inp:es2}), we obtain from \eqref{eq:mid_pro} and \eqref{reab:1} and the definitions of the error estimators, 
\begin{equation}\label{reab:2}
\norm{\e_{\y}}_{\curl,\Om}^2\lesssim\sum_{T \in \mathscr{T}_h}\left(\eta_{y,T}^{(1)}+\eta_{y,T}^{(2)}\right)\norm{\e_{\y}}_{\curl,\widetilde{\Om}_T}
+ \sum_{T \in \mathscr{T}_h} \sum_{F \in \partial T \cap \Omega}\left(\eta_{y,F}^{(1)}+\eta_{y,F}^{(2)}\right)\norm{\e_{\y}}_{\curl,\widetilde{\Om}_T}\,.
\end{equation}
\mb{By the property \eqref{eq:localuni2},}
 the desired estimate follows from \eqref{reab:2} and the Cauchy's inequality:
\begin{equation}\label{esfory}
\norm{\y(\u^*_h)-\y_h^*}_{\curl,\Om}\lesssim \eta_h\,.
\end{equation}

The error $\e_{\p}:=\p(\u_h^*) - \p_h^*$ for the adjoint state can be analysed similarly. \mb{We note 
\begin{align*}
    \norm{\e_{\p}}_{\curl,\Om}^2 \approx & B(\e_{\p},\e_{\p}-\Pi_h \e_{\p})+B(\e_{\p},\Pi_h \e_{\p})\,,
\end{align*} 
and write $\e_{\p}-\Pi_h \e_{\p} = \nabla \varphi + \z$ by Lemma \ref{interpo}. 
Then some elementary calculations give us that 
\begin{align*}
    &B(\e_{\p},\e_{\p}-\Pi_h \e_{\p})  = B(\p(\u_h^*) - \p_h^*,\nabla \varphi + \z)\\
   \lesssim & (\curl\y_h^*-\y^d-\mu^{-1} \curl\p_h^*,\curl \z)_{0,\Omega} - (\sigma\p_h^*,\nabla \varphi +\z)_{0,\Omega}
    +\norm{\y_h^*-\y(\u_h^*)}_{\curl,\Om}\norm{\e_{\p}}_{\curl,\Om}\,.
\end{align*}
Moreover, by equations \eqref{kkt2:2} and \eqref{intermi:2}, we have  
\begin{align*}
    \left | B(\e_{\p},\Pi_h \e_{\p}) \right | \lesssim \norm{\y_h^*-\y(\u_h^*)}_{\curl,\Om}\norm{\e_{\p}}_{\curl,\Om}\,.
\end{align*}
Then we can derive by using integration by parts and the above estimates that}
\begin{align*}
\norm{\e_{\p}}_{\curl,\Om}^2 
\lesssim & (\curl\y_h^*-\y^d-\mu^{-1} \curl\p_h^*,\curl \z)_{0,\Omega}-(\sigma\p_h^*,\nabla \varphi +\z)_{0,\Omega}
+\norm{\y_h^*-\y(\u_h^*)}_{\curl,\Om}\norm{\e_{\p}}_{\curl,\Om} \notag \\
= &\sum_{T \in \mathscr{T}_h}(-\curl\y^d-\curl \mu^{-1} \curl \p_h^*-\sigma\p_h^*,\z)_{0,T}+(\div(\sigma \p_h^*),\varphi)_{0,T} \notag \\
&-\sum_{F \in \mathscr{F}_h}([\gamma_t(\curl\y_h^*-\mu^{-1} \curl \p^*_h)]_F,\z)_{0,F} - ([\gamma_n(\sigma\p_h^*)]_F,\varphi)_{0,F} \notag \\
&+\norm{\y_h^*-\y(\u_h^*)}_{\curl,\Om}\norm{\e_{\p}}_{\curl,\Om}\,,
\end{align*}
which, by \eqref{esfory}, the trace inequality \eqref{auxeq_trace} and 
Lemma \ref{interpo}, gives 
\begin{equation}\label{esforp}
\norm{\p(\u^*_h)-\p_h^*}_{\curl,\Om}\lesssim \eta_h\,.
\end{equation}
Combining estimates (\ref{esfory}) and (\ref{esforp}) with Lemma \ref{lem:forreal} and the definition of $\h{\eta}_h$ completes the proof of \eqref{eq:reliability}.
\end{proof}

\subsection{Efficiency}
In this section, we consider the efficiency estimate, which is another aim of the a posteriori error analysis. For this,  we need the so-called bubble functions, which plays a similar role to the cut-off functions and can help us estimate the local errors. As we shall see soon, 
when we deal with the divergence parts of the residual-type error estimator $\eta_h$ (i.e., $\eta^{\sss (2)}_{y,T}$, $\eta^{\sss (2)}_{p,T}$, 
$\eta^{\sss (2)}_{y,F}$ and $\eta^{\sss (2)}_{p,F}$), 
the higher-order bubble functions have to be used to ensure the vanishing boundary traces of some terms. Moreover, the $\curl$ structures in the right-hand sides of the adjoint equations \eqref{kkt1:2} and \eqref{kkt2:2} also need our special and careful treatment. 
These important points were not addressed in \cite{hoppe2015adaptive}.

For the reader's convenience, we next briefly review the definition of the bubble functions and some basic results, see \cite{verfurth1994posteriori} and \cite{ainsworth2011posteriori} for a comprehensive introduction of this topic. We define the bubble function for an element $T \in \T{h}$ by $b_T(\x) = 256\Pi_{i=1}^4\lambda_i^T(\x)$, $\x \in T$, where $\lambda_i^T, 1 \le i \le 4$, are the barycentric coordinate functions associated with four vertices of $T \in \mathscr{T}_h$. Similarly, the bubble function for a face $F \in \mathscr{F}_h$ is given by $b_F|_T(\x) =  27\Pi_{i=1}^3\lambda_i^F(\x)$, $\x \in T \in \omega_F$. Here, $\omega_F:=\{T\in \mathscr{T}_h\,;\  F  \subset \partial T \}$ is the element pair for a face $F \in \mathscr{F}_h$, and $\lambda_i^F$ are the barycentric coordinate functions associated with the vertices of the face $F \in \mathscr{F}_h$, which can be naturally extended to $\omega_F$. To extend the face residuals defined on $F$ to $\omega_F$, we introduce the extension operator as follows.
We first define the operator $\hat{E}:C(\hat{F})\rightarrow C(\hat{T})$
on the reference element $\hat{T}$ in $\R^3$ by
\begin{align*}
\hat{E}[\hat{p}](\hat{x},\hat{y},\hat{z}):=\hat{p}(\hat{x},\hat{y})\,,
\end{align*}
where $\hat{F}$ is the face of $\hat{T}$ lying on the $(\h{x},\h{y})$-plane.
By using the affine mapping $F_T(\h{\x})=A_T\hat{\x}+\a_T: \hat{T} \to T \in \omega_F$,
the general extension operator $E:C(F) \rightarrow C(\omega_F)$ can be introduced by
\begin{align} \label{aux_eq7}
E[p]|_T = \hat{E}[p\circ F_T] \circ F_T^{-1}\,,\ T \in \omega_F\,,
\end{align}
where the mappings $F_T$, $T\in \omega_F$, are chosen such that $\hat{F}$ is mapped to $F$ and $E[p]$ is well-defined on $\omega_F$ and continuous. The next lemma summarizes the important properties of the bubble functions, which can be easily verified by 
 the equivalence of norms in a finite-dimensional linear space and the standard scaling argument. 

\begin{lem}\label{bubble}
Let $k$ be a positive integer and $s$ be a positive real number. For any $T \in \mathscr{T}_h$ and $F \in \mathscr{F}_h$, there holds 
\begin{align} \label{auxeq_bubble1}
    \norm{\phi}_{0,T}\lesssim \norm{b_T^s \phi}_{0,T} \le \norm{\phi}_{0,T}\,,\quad \norm{\varphi}_{0.F}\lesssim \norm{b_F^s \varphi}_{0.F} \le \norm{\varphi}_{0,F}\,,
\end{align}
for all $\phi \in P_k(T)$ and $\varphi \in P_k(F)$, and 
\begin{align} \label{auxeq_bubble2}
    h_F^{1/2} \norm{\varphi}_{0,F}\lesssim \norm{b_F^sE(\varphi)}_{0,T}\lesssim h_F^{1/2} \norm{\varphi}_{0,F}\,,
\end{align}
for $T \in w_F$ and $\varphi \in P_k(F)$. 
\end{lem}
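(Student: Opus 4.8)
The plan is to prove Lemma \ref{bubble} by reducing everything to the reference element $\hat{T}$ (and its face $\hat{F}$) via the standard affine scaling argument, and on the reference element to invoke the equivalence of any two norms on the finite-dimensional space $P_k$. First I would establish \eqref{auxeq_bubble1}. The upper bounds are immediate because $0 \le b_T \le 1$ on $T$ and $0 \le b_F \le 1$ on $F$, so that $b_T^s \le 1$ pointwise, giving $\norm{b_T^s \phi}_{0,T} \le \norm{\phi}_{0,T}$ and likewise for the face term. For the lower bounds, I would first work on $\hat{T}$: the map $\hat{\phi} \mapsto \norm{\hat{b}_{\hat{T}}^s \hat{\phi}}_{0,\hat{T}}$ defines a norm on $P_k(\hat{T})$, since $\hat{b}_{\hat{T}}^s > 0$ on the open simplex and the product $\hat{b}_{\hat{T}}^s \hat{\phi}$ vanishes identically only when $\hat{\phi} \equiv 0$. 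Because $P_k(\hat{T})$ is finite-dimensional, this norm is equivalent to $\norm{\cdot}_{0,\hat{T}}$, which yields $\norm{\hat{\phi}}_{0,\hat{T}} \lesssim \norm{\hat{b}_{\hat{T}}^s \hat{\phi}}_{0,\hat{T}}$ with a constant depending only on $k$ and $s$.

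Next I would transport these reference estimates to a general $T$ by the affine change of variables $F_T$. Under this pullback, the $L^2(T)$-norm and $L^2(\hat{T})$-norm of a function and its composition with $F_T$ differ only by the factor $|\det A_T|^{1/2} \approx h_T^{3/2}$, and crucially the bubble function $b_T$ is mapped (up to the normalizing constant, which is built into its definition) to $\hat{b}_{\hat{T}}$; since the scaling factor is the same on both sides of each inequality, it cancels. The shape regularity of $\{\mathscr{T}_h\}$ guarantees that the constants in the scaling are uniformly bounded, so the equivalence constants depend only on $k$, $s$ and the shape regularity. The analogous argument on $\hat{F}$, using that $\hat{b}_{\hat{F}}^s \hat{\varphi}$ is a norm on $P_k(\hat{F})$, gives the face inequalities in \eqref{auxeq_bubble1}.

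Finally I would turn to \eqref{auxeq_bubble2}, which couples a face quantity with an element norm through the extension operator $E$ defined in \eqref{aux_eq7}. Here the scaling is slightly more delicate since the two sides live on objects of different dimension. On the reference configuration, I would note that $\hat{\varphi} \mapsto \norm{\hat{b}_{\hat{F}}^s \hat{E}(\hat{\varphi})}_{0,\hat{T}}$ is again a norm on the finite-dimensional space $P_k(\hat{F})$ (it vanishes only if $\hat{\varphi} \equiv 0$, because $\hat{E}$ is injective and $\hat{b}_{\hat{F}}^s > 0$ on the interior), hence equivalent to $\norm{\hat{\varphi}}_{0,\hat{F}}$. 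Pulling back through $F_T$, the element $L^2$-norm scales like $h_T^{3/2}$ while the face $L^2$-norm scales like $h_F$ (a two-dimensional measure), so the ratio produces exactly the factor $h_F^{1/2}$ appearing in \eqref{auxeq_bubble2}, using $h_T \approx h_F$ by shape regularity. The main obstacle, and the point deserving the most care, is precisely this dimensional bookkeeping in the extension estimate: one must check that $E$ commutes correctly with the affine pullback so that $\hat{E}(\varphi \circ F_T)$ agrees with $(E[\varphi])\circ F_T$ and that the mismatch between the surface and volume Jacobians accounts cleanly for the $h_F^{1/2}$ weight; the norm-equivalence and the elementary bound $b_T, b_F \le 1$ are otherwise routine.
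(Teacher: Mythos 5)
Your proposal is correct and follows exactly the route the paper indicates for this lemma: the paper itself only remarks that the estimates ``can be easily verified by the equivalence of norms in a finite-dimensional linear space and the standard scaling argument,'' which is precisely the reference-element norm-equivalence plus affine-scaling argument you carry out (including the careful $h_T^{3/2}$ versus $h_F$ Jacobian bookkeeping that produces the $h_F^{1/2}$ factor in \eqref{auxeq_bubble2}). Nothing further is needed.
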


We are now in a position to state and prove the main result of this section.
\begin{thm} \label{efficiency}
Let the triplets $(\y^*,\p^*,\u^*)$ and $(\y_h^*,\p_h^*,\u_h^*)$ be the solutions to the continuous and discrete optimality systems \eqref{kkt1:1}-\eqref{kkt1:3} and
    \eqref{kkt2:1}-\eqref{kkt2:3}, respectively,
    and the multipliers $\llambda^*$ and $\llambda_h^*$ be given by \eqref{eq:confrederi} and \eqref{eq:disfrederi}. Then we have the efficiency estimate:
%
\begin{align}
\h{\eta}_h \lesssim \norm{\y_h^*-\y ^*}_{\curl,\Om}+\norm{\p_h^* -\p^*}_{\curl,\Om}&
+\norm{\u_h^*-\u^*}_{0,\Om} + \norm{\llambda^*_h-\llambda^*}_{0,\Om} \notag \\
&+ \osc_h(\y^d) + \osc_h(\f) + \osc_h(\u^d)\,.
\end{align}
\end{thm}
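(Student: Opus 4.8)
The plan is to prove the efficiency (lower bound) of each constituent of $\h\eta_h$ separately by the standard bubble-function localization technique, bounding each local residual indicator by the local true error plus the relevant data oscillation, and then summing over all elements and faces using the finite-overlap property \eqref{eq:localuni2}. I would organize the estimates into four groups: the element and face residuals for the state ($\eta_{y,T}^{(i)},\eta_{y,F}^{(i)}$), those for the adjoint state ($\eta_{p,T}^{(1)},\eta_{p,T}^{(2)},\eta_{p,F}^{(i)}$), the projection-type term $\eta_{p,T}^{(3)}=\norm{\p_h^*-\mathbb{P}_h\p_h^*}_{0,T}$, and finally the control oscillation $\osc_T(\u^d)$ itself.

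For the curl-type element residual $\eta_{y,T}^{(1)}$, I would multiply the element-interior residual $\f+\u_h^*-\curl\mu^{-1}\curl\y_h^*-\sigma\y_h^*$ by $b_T$ times itself, integrate over $T$, and use integration by parts: the strong-form residual tests against a compactly supported bubble, so all boundary terms vanish, and the exact equation \eqref{kkt1:1} lets me replace $\f+\u_h^*$-type quantities by $\f+\u^*$ up to the control error $\u_h^*-\u^*$. The inverse-type estimate \eqref{auxeq_bubble1} then recovers $\norm{\cdot}_{0,T}$ from $\norm{b_T^{1/2}\cdot}_{0,T}$, and multiplication by $h_T$ yields the bound by $\norm{\y_h^*-\y^*}_{\curl,T}+h_T\norm{\u_h^*-\u^*}_{0,T}$ plus $\osc_T(\f)$ (after replacing $\f$ by $\f_h$). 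The face residuals $\eta_{y,F}^{(1)},\eta_{y,F}^{(2)}$ are handled by testing against $b_F E(\cdot)$, using \eqref{auxeq_bubble2} to relate the $L^2(F)$ norm to the $L^2(T)$ norm over the element pair $\omega_F$, and absorbing the already-bounded element residuals; the divergence-type residuals $\eta_{y,T}^{(2)},\eta_{p,T}^{(2)}$ and $\eta_{y,F}^{(2)},\eta_{p,F}^{(2)}$ require the higher-order bubbles so that the relevant normal traces vanish, as flagged in the text. The adjoint residuals follow the same pattern using \eqref{kkt1:2}, with the extra care that the $\curl\y^d$ terms on the right-hand side generate the oscillation $\osc_T(\y^d)=h_T\norm{\curl(\y^d-\y_h^d)}_{0,T}$ when one passes from $\y^d$ to its discrete surrogate.

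The term $\eta_{p,T}^{(3)}=\norm{\p_h^*-\mathbb{P}_h\p_h^*}_{0,T}$ is genuinely different and is where I expect the main obstacle. It is not a PDE residual but measures the failure of the discrete adjoint state to be piecewise constant, and there is no bubble-function trick that directly controls it by a curl-type local error. The key idea is to exploit the definitions \eqref{eq:confrederi} and \eqref{eq:disfrederi} of the multipliers: since $\p^*+\alpha(\u^*-\u^d)=\llambda^*$ and $\mathbb{P}_h\p_h^*+\alpha(\u_h^*-\u_h^d)=\llambda_h^*$, I can write $\p_h^*-\mathbb{P}_h\p_h^*=(\p_h^*-\p^*)+(\p^*-\llambda^*)+\alpha(\u^*-\u^d)+(\llambda_h^*-\mathbb{P}_h\p_h^*)$ and reorganize so that $\p_h^*-\mathbb{P}_h\p_h^*$ is expressed through $\p_h^*-\p^*$, $\llambda_h^*-\llambda^*$, and terms of the form $\alpha(\u^*-\u_h^*)$ together with the oscillation $\alpha\norm{\u^d-\u_h^d}_{0,T}=\alpha\,\osc_T(\u^d)$. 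Concretely, one adds and subtracts to obtain $\p_h^*-\mathbb{P}_h\p_h^*=(\p_h^*-\p^*)+(\llambda^*-\llambda_h^*)+\alpha(\u_h^*-\u^*)+\alpha(\u^d-\u_h^d)$ on each $T$ (using that $\U^{ad}$ has been shifted so the multiplier relations hold), which after taking $L^2(T)$ norms and the triangle inequality gives precisely the bound by $\norm{\p_h^*-\p^*}_{0,T}+\norm{\llambda_h^*-\llambda^*}_{0,T}+\norm{\u_h^*-\u^*}_{0,T}+\osc_T(\u^d)$.

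Finally, the oscillation $\osc_T(\u^d)$ appearing in $\h\eta_h$ is already one of the terms on the right-hand side of the claimed estimate, so it passes through trivially. Having bounded every indicator componentwise by local true-error and oscillation contributions, I would square, sum over $T\in\mathscr{T}_h$ and $F\in\mathscr{F}_h$, and invoke the uniform finite-overlap bounds \eqref{eq:localuni}--\eqref{eq:localuni2} to convert the sums of local patch norms into global norms, yielding the stated global efficiency estimate. The principal difficulty throughout is the treatment of $\eta_{p,T}^{(3)}$, since it forces the multiplier error $\norm{\llambda_h^*-\llambda^*}_{0,\Om}$ into the estimate and must be handled purely algebraically through the optimality relations rather than by the residual/bubble machinery that dispatches all the other terms.
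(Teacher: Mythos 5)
Your proposal is correct and follows essentially the same route as the paper: bubble-function localization (with higher-order bubbles for the divergence-type terms and the extension operator for the face terms) for all PDE residuals, the identity $\p_h^*-\mathbb{P}_h\p_h^*=(\p_h^*-\p^*)+(\llambda^*-\llambda_h^*)+\alpha(\u_h^*-\u^*)+\alpha(\u^d-\u_h^d)$ for $\eta_{p,T}^{(3)}$, and summation using the finite-overlap property. The paper's proof is exactly this, with the $\eta_{p,T}^{(3)}$ estimate stated before the four groups of local bounds rather than after.
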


Before we start our proof, we remark that it is necessary to consider the error of the multiplier
$\norm{\llambda^*_h-\llambda^*}_{0,\Om}$ here  in order to estimate $\h{\eta}_h$, in comparison with  \cite{hoppe2015adaptive}, 
since the additional error estimator $\eta_{p,T}^{\sss (3)}$ is included in $\h{\eta}_h$. We start with the following local efficiency estimate:
\begin{align*}
    \eta_{p,T}^{(3)} & = \norm{\p^*_h - \p^* + \p^* - \mathbb{P}_h \p^*_h}_{0,T}\\ &\le    \norm{\llambda^*-\llambda_h^* - \alpha(\u^* - \u^*_h -\u^d  + \u_h^d)}_{0,T} + \norm{\p^*_h - \p^*}_{0,T}  \\
&     \leq \norm{\p^*-\p_h^*}_{0,T}+\norm{\llambda^*-\llambda_h^*}_{0,T}+\alpha \left(\norm{\u^*-\u_h^*}_{0,T} + \osc_T(\u^d)\right),
\end{align*}
by the definitions of $\llambda^*$, $\llambda_h^*$ and $\eta_{p,T}^{(3)}$ and the \mb{triangle inequality}. Our proof proceeds by establishing more local efficiency estimates for $\eta_T$, which are divided into the following four groups of estimates, for $T \in \mathscr{T}_h$ and $F \in \mathscr{F}_h$,
\begin{align*}
   & \left\{
        \begin{aligned}
           & \eta_{y,T}^{(1)} \lesssim h_T\norm{\u^*-\u_h^*}_{0,T}+\norm{\y^* -\y_h^*}_{\curl,T}+\osc_T(\f), \\
            &\eta_{p,T}^{(1)} \lesssim \norm{\p^*-\p_h^*}_{\curl,T}+\norm{\y^* -\y_h^*}_{\curl,T}+\osc_T(\y^d),
        \end{aligned}
    \right.\\
   & \left\{
        \begin{aligned}
           & \eta_{y,T}^{(2)} \lesssim \norm{\u^*-\u_h^*}_{0,T}+\norm{\y^* -\y_h^*}_{0,T} + \osc_T(\f), \\
        &    \eta_{p,T}^{(2)} \lesssim \norm{\p^*-\p_h^*}_{0,T},
        \end{aligned}
        \right.\\
        &\left\{
            \begin{aligned}
              &  \eta_{y,F}^{(1)} \lesssim h_T\norm{\u^*-\u_h^*}_{0,w_F}+\norm{\y^* -\y_h^*}_{\curl,w_F}+\eta_{y,T^+}^{(1)}+\eta_{y,T^-}^{(1)},\\
               & \eta_{p,F}^{(1)} \lesssim \norm{\p^*-\p_h^*}_{\curl,w_F}+\norm{\y^* -\y_h^*}_{\curl,w_F}+ \eta_{p,T^+}^{(1)}+\eta_{p,T^-}^{(1)},
            \end{aligned}
            \right.\\
           & \left\{
                \begin{aligned}
                &    \eta_{y,F}^{(2)} \lesssim \norm{\u^*-\u_h^*}_{0,w_F}+\norm{\y^* -\y_h^*}_{0,w_F}+ \eta_{y,T^+}^{(2)}+\eta_{y,T^-}^{(2)} + \osc_{T^+}(\f),\\
                &    \eta_{p,F}^{(2)} \lesssim \norm{\p^*-\p^*_h}_{0,w_F} + \eta_{p,T^+}^{(2)}+\eta_{p,T^-}^{(2)},
                \end{aligned}
                \right.
            \end{align*}
where $T^+$ and $T^-$ are two elements in $\omega_F$ with $F = T^+ \cap T^-$.


\begin{proof}
We give the proof of the above four groups of inequalities by the following four steps.
\begin{enumerate}[(1)]
\item  We start with $\eta_{y,T}^{\sss (1)}$ and readily see \mb{by the triangle inequality} that 
\begin{equation} \label{aux_eq1}
\eta_{y,T}^{(1)} \leq h_T\norm{\f_h+\u^*_h-\curl \mu^{-1} \curl\y_h^*-\sigma \y_h^* }_{0,T}+\osc_T(\f)\,.
\end{equation}
It is clear that $b_T$ vanishes on $\partial T$, and hence we can define $\z_h:=b_T(\f_h+\u_h^*-\curl \mu^{-1} \curl \y_h^*-\sigma\y_h^*) \in \textit{\textbf{H}}_0(\curl,\Om)$. Using the estimate \eqref{auxeq_bubble1} 
with $\phi = \f_h+\u_h^*-\curl \mu^{-1} \curl \y_h^*-\sigma\y_h^*$ and $s = 1/2$, we obtain 
\begin{align} \label{aux_eq2}
\norm{\f_h+\u_h^*-\curl \mu^{-1} \curl \y_h^*-\sigma\y_h^*}_{0,T}^2&\approx (\f_h+\u_h^*-\curl \mu^{-1} \curl \y_h^*-\sigma\y_h^*,\z_h)_{0,T} \notag \\
&=(\f_h-\f,\z_h)_{0,T}+ (\u^*_h - \u^*,\z_h)_{0,T}+B(\y^*-\y_h^*,\z_h)\,.
\end{align}
By estimates \eqref{aux_eq1} and \eqref{aux_eq2}, and the inverse inequality:
\begin{equation*}
\norm{\z_h}_{\curl,T}\lesssim h_T^{-1}\norm{\z_h}_{0,T}\,,
\end{equation*}
as well as the norm equivalence: $\norm{\z_h}_{0,T} \approx \norm{\f_h+\u_h^*-\curl \mu^{-1} \curl \y_h^*-\sigma\y_h^*}_{0,T}$, we can derive  
\begin{equation*}
\eta_{y,T}^{(1)} \lesssim h_T\norm{\u^*-\u_h^*}_{0,T}+\norm{\y^* -\y_h^*}_{\curl,T}+\osc_T(\f)\,.
\end{equation*}
The estimate of $\eta_{p,T}^{\sss (1)}$ is similar. We  note 
 \begin{equation} \label{aux_eq5}
    \eta_{p.T}^{(1)} \lesssim h_T\norm{\curl\y_h^d+\curl\mu^{-1} \curl \p_h^*+\sigma \p_h^*}_{0,T}+\osc_T(\y^d)\,,
\end{equation}
and define $\z_h := b_T(\curl\y^d_h+\curl\mu^{-1} \curl \p_h^*+\sigma \p^*_h)\in \textit{\textbf{H}}_0(\curl,\Omega)$. Then a similar estimate as above gives
\begin{align}
 \norm{\curl\y_h^d+ \curl\mu^{-1} \curl \p_h^*+\sigma \p_h^*}^2_{0,T} \lesssim &(\curl\y_h^d+\curl\mu^{-1} \curl\p_h^* +\sigma\p_h^*,\z_h)_{0,T} \notag \\
 \lesssim & (\curl\y^d  +\curl\mu^{-1} \curl\p_h^* +\sigma\p_h^*,\z_h)_{0,T} - (\curl \y^*, \curl\z_h )_{0,T}  \notag  \\
 &  + (\curl \y_h^d - \curl \y^d, \z_h)_{0,T} + (\curl(\y^* - \y^*_h), \curl z_h)_{0,T} \label{aux_eq3} \\
 \lesssim &\norm{\curl \y_h^d -\curl \y^d}_{0,T}\norm{\z_h}_{0,T} + \norm{\y_h^*-\y^*}_{\curl,T} \norm{\z_h}_{\curl,T} \notag \\
  & +\norm{\p^*_h-\p^*}_{\curl,T} \norm{\z_h}_{\curl,T} \label{aux_eq4}\,, 
\end{align}
\mb{where in \eqref{aux_eq3} we have used $(\curl \y^*_h, \curl \z_h)_{0,T} = 0$ from the fact that $\y^*_h$ is a first-order polynomial on $T$, and in \eqref{aux_eq4} we have used 
\begin{align*}
    & (\curl\y^d  +\curl\mu^{-1} \curl\p_h^* +\sigma\p_h^*,\z_h)_{0,T} - (\curl \y^*, \curl\z_h )_{0,T} \\ = & B(\p_h^* - \p^*, \z_h) \lesssim \norm{\p^*_h-\p^*}_{\curl,T} \norm{\z_h}_{\curl,T}\,.
\end{align*}
Further applying the inverse estimate for $\norm{\z_h}_{\curl,T}$ in \eqref{aux_eq4} and recalling \eqref{aux_eq5}, 
we come to}
\begin{equation*}
\eta_{p,T}^{(1)} \lesssim \norm{\p^*-\p_h^*}_{\curl,T}+\norm{\y^* -\y_h^*}_{\curl,T} + \osc_T(\y^d)\,.
\end{equation*}
\item Define $\z_h := \div(\f_h-\sigma\y_h^*)b_T^2$ with  $\norm{\z_h}_{0,T} \approx \norm{\div(\f_h-\sigma\y_h^*)}_{0,T}$. 
It is clear that $\nabla \z_h$ is a polynomial on $T$ and vanishes on the boundary $\partial T$, which gives 
$\nabla \z_h \in \textit{\textbf{H}}_0(\curl,\Omega)$. By a direct calculation, we have 
\begin{align}
    (\div(\f_h-\sigma\y_h^*), \z_h)_{0,T} &= (\div(\f_h-\sigma \y_h^* +\u_h^* ),\z_h)_{0,T} \notag \\
    & = (\div(\f-\sigma\y_h^*+\u_h^*),\z_h)_{0,T} + (\div(\f_h-\f),\z_h)_{0,T}  \notag \\
    & = (-\sigma \y^*+\sigma \y_h^*+\u^* -\u^*_h, \nabla \z_h)_{0,T} + (\div(\f_h-\f),\z_h)_{0,T}\,, \label{aux_eq6}
\end{align}
\mb{where we have used the following observation in the last equality: 
\begin{equation*}
    B(\y^*,\nabla \z_h) = (\sigma\y^*, \nabla \z_h)_{0,\Omega} = (\f+\u^*, \nabla \z_h)_{0,\Omega}\,,
\end{equation*}
which is from (\ref{kkt1:1}) with the test function $\phii = \nabla \z_h$.} Again, by Lemma \ref{bubble} and the inverse estimate, we can derive from the definition of $\eta_{y,T}^{\sss (2)}$ and \eqref{aux_eq6} that
\begin{equation*}
\eta_{y,T}^{(2)}\lesssim \norm{\u_h^*-\u^*}_{0,T}+\norm{\y^*_h-\y^*}_{0,T}+\osc_T(\f)\,.
\end{equation*}
Likewise, for $\eta_{p,T}^{\sss (2)}$, taking $\z_h = \div(\sigma\p_h^*)b_T^2$ and observing from (\ref{kkt1:2}):
\begin{equation*}
(\sigma\p^*, \nabla \z_h)_{0,\Omega} = (\curl\y_*-\y^d, \curl \nabla \z_h)_{0,\Omega} = 0\,,
\end{equation*}
we can derive, by almost the same arguments as above, that
\begin{equation*}
\eta_{p,T}^{(2)} \lesssim \norm{\p^*-\p_h^*}_{0,T}\,.
\end{equation*}
\item Since the lowest-order edge element is used for the discretization, $\curl\y_h^*$ is a piecewise constant vector. Then $\gamma_t(\mu^{-1} \curl \y_h^*)$ is a polynomial defined on $F$ and can be extended to $w_F$ by the extension operator $E$ introduced in \eqref{aux_eq7}. 
Define $$\z_h:=b_FE([\gamma_t(\mu^{-1} \curl \y_h^*)]_F) \in \textit{\textbf{H}}_0(\curl,\Om).$$ 
By the estimate \eqref{auxeq_bubble1} and integration by parts over $\omega_F$, we have 
\begin{align}
(\eta_{y,F}^{(1)})^2 =  h_F\norm{[\gamma_t(\mu^{-1} \curl \y_h^*)]_F}_{0,F}^2 \approx & h_F([\gamma_t(\mu^{-1} \curl \y_h^*)]_F,\z_h)_{0,F} \notag\\
= & h_F(\curl \mu^{-1} \curl \y_h^*+\sigma \y_h^*-\f-\u_h^*, \z_h)_{0.w_F} \notag \\
&-h_F(\mu^{-1} \curl \y_h^*-\mu^{-1} \curl \y^*, \curl \z_h)_{0,w_F} \notag \\
&+h_F(\u_h^*-\u^*,\z_h)_{0,w_F}+ h_F(\sigma \y^*-\sigma \y^*_h,\z_h)_{0,w_F}\,. \label{eff3}
\end{align}
The estimate \eqref{auxeq_bubble2} and the inverse estimate give us
\begin{align}
h_F^{1/2}\norm{[\gamma_t(\mu^{-1} \curl \y_h^*)]_F}_{0,F}\approx \norm{\z_h}_{0,w_F}\,,
\quad 
\norm{\curl \z_h}_{0,w_F}\lesssim h_F^{-1}\norm{\z_h}_{0,w_F}\,. \label{e2:p2}
\end{align}
Combining (\ref{e2:p2}) with the formula (\ref{eff3}), we get
\begin{equation*}
\eta_{y,F}^{(1)}\lesssim \eta_{y,T^+}^{(1)}+\eta_{y,T^-}^{(1)}+\norm{\y_h^*-\y^*}_{\curl,w_F}+h_F\norm{\u_h^*-\u^*}_{0,w_F}\,.
\end{equation*}
For $\eta_{p,F}^{\sss (1)}$, let $\z_h := b_F E([\gamma_t(-\mu^{-1} \curl \p_h^*+\curl\y_h^*)]_F)$. By similar calculations, it follows that
\begin{align}
(\eta_{p,F}^{(1)})^2 \approx & h_F([\gamma_t(-\mu^{-1} \curl \p_h^* +\curl \y_h^*)]_F,\z_h)_{0,F} \notag \\
= &h_F(-\curl\mu^{-1} \curl \p_h^*-\curl \y^d - \sigma \p^*_h, \z_h)_{0,w_F} \notag \\
&- h_F(-\mu^{-1} \curl \p^*_h + \curl \y_h^* - \y^d, \curl \z_h)_{0,w_F} + h_F(\sigma \p_h^*, \z_h)_{0,w_F} \label{aux_eq8} \\
 = &  h_F(-\curl\mu^{-1} \curl \p_h^*-\curl \y^d - \sigma \p^*_h, \z_h)_{0,w_F}  + h_F B(\p_h^*-\p^*,\z_h) \notag \\
 & + h_F (\curl(\y^* - \y_h^*), \curl \z_h)_{0,\omega_F} \,,
\end{align}
\mb{where we have used $(\curl \curl \y_h^*, \z_h)_{0, T} = 0$ for $T \in \omega_F$ in \eqref{aux_eq8}.} 
Then, by the inverse estimate and Lemma \ref{bubble},  a direct estimate leads to 
\begin{equation*}
\eta_{p,F}^{(1)} \lesssim \eta_{p,T^+}^{(1)}+\eta_{p,T^-}^{(1)}+\norm{\p^*-\p^*_h}_{0,w_F} + \norm{\y_h^*-\y^*}_{\curl,w_F}\,.
\end{equation*}
\item For $\eta_{y,F}^{(2)}$, we define $\z_h := b_F^2 E([\gamma_n(\f_h+\u_h^*-\sigma\y_h^*)]_F)$. It is easy to see that $\nabla \z_h \in \textit{\textbf{H}}_0(\curl,\Om)$. Taking $\phii = \nabla \z_h$ in (\ref{kkt1:1}) gives 
\begin{equation}\label{peff4:1}
(\f+\u^*-\sigma\y^*, \nabla \z_h)_{0,w_F} = 0\,.
\end{equation}
\mb{We note by the triangle inequality that 
\begin{equation} \label{eq:prfour1}
   \eta_{y,F}^{(2)}\le h_F^{1/2}\norm{[\gamma_n(\f_h+\u_h^*-\sigma\y_h^*)]_F}_{0,F} + h_F^{1/2} \norm{[\gamma_n(\f  - \f_h)]_F}_{0,F}  \,.
\end{equation}
Then we have, again by the property of the bubble functions \eqref{auxeq_bubble1} and integration by parts over $\omega_F$,}
\begin{align}
   \norm{[\gamma_n(\f_h+\u_h^*-\sigma\y_h^*)]_F}^2_{0,F} \approx& ([\gamma_n(\f_h+\u_h^*-\sigma\y_h^*)]_F,\z_h)_{0,F} \notag\\ 
    \lesssim  &(\div(\f-\sigma\y_h^*),\z_h)_{0,w_F}+ ([\gamma_n(\f_h-\f)]_F,\z_h)_{0,F} \notag
    \\&+ \mb{(\f + \u_h^* - \sigma \y_h^*, \nabla \z_h)_{0,\omega_F}} \,, \label{eq:prfour2}
\end{align}
\mb{where we can use \eqref{peff4:1} to rewrite the last term as 
\begin{align} \label{aux_eq9}
    (\f + \u_h^* - \sigma \y_h^*, \nabla \z_h)_{0,\omega_F}=   (\u_h^*-\u^*,\nabla \z_h)_{0,w_F}+(\sigma\y^*-\sigma \y_h^*,\nabla \z_h)_{0,w_F}.
\end{align}
Therefore, we obtain from \eqref{eq:prfour1}--\eqref{aux_eq9}, the Cauchy's inequality, 
applying the inverse estimate to $\norm{\nabla \z_h}_{0,\omega_F}$
and the trace inequality to $\norm{\z_h}_{0, F}$ (cf.\,\eqref{auxeq_trace}) that}
\begin{equation*}
\eta_{y,F}^{(2)} \lesssim \norm{\u^*-\u_h^*}_{0,w_F}+\norm{\y^* -\y_h^*}_{0,w_F}+ \eta_{y,T^+}^{(2)}+\eta_{y,T^-}^{(2)} + \osc_{T^+}(\f)\,,
\end{equation*}
\mb{where we have used the trivial bound that $h_F^{1/2} \norm{[\gamma_n(\f  - \f_h)]_F}_{0,F}  \le  \osc_{T^+}(\f)$.} 
The estimate for $\eta^{\sss (2)}_{p,F}$ follows from the 
same (even simpler) argument. In fact, we can define $\z_h = b_F^2 E([\gamma_n(\sigma\p_h^*)]_F)$, which implies $\nabla \z_h \in \textit{\textbf{H}}_0(\curl,\Om)$ and, by \eqref{kkt1:2}, $(\sigma \p^*, \nabla \z_h)_{0,\Omega} = 0$. Then, a typical calculation gives  
\begin{align*}
    \norm{[\gamma_n(\sigma\p_h^*)]_F}^2_{0,F} \approx& ([\gamma_n(\sigma\p_h^*)]_F,\z_h)_{0,F} =  (\div(\sigma\p_h^*),\z_h)_{0,w_F} + (\sigma\p_h^* - \sigma \p^*, \nabla \z_h)_{0,w_F}\,,
 \end{align*}
which yields, by Lemma \ref{bubble} and the inverse estimate, 
\begin{align*}
    \eta_{p,F}^{(2)} \lesssim \norm{\p^*-\p^*_h}_{0,w_F} + \eta_{p,T^+}^{(2)}+\eta_{p,T^-}^{(2)}\,.
\end{align*}
\end{enumerate}
Theorem\,\ref{efficiency} follows now by adding up the above local 
efficiency estimates over all $T \in \T{h}$.
\end{proof}

\section{Convergence}\label{converge}
We devote this whole section to establish our main result that the sequence of  
adaptively generated finite element
solutions $\{(\yk,\pk,\uk)\}_{k \ge 0}$ converges strongly to the true solution $(\y^*,\p^*,\u^*)$.
\begin{thm}\label{thm:converge}
Let $\{(\yk,\pk,\uk)\}_{k \ge 0}$ be the sequence of  discrete triplets generated by the adaptive Algorithm \ref{alg:Framwork} and $(\y^*,\p^*,\u^*)$ be the solution to the system \eqref{kkt1:1}-\eqref{kkt1:3}. Then we have
the strong convergences:
\begin{equation}
\lim\limits_{k \to \infty} \norm{\y^*_k-\y^*}_{\curl,\Omega} = 0\,,\ \lim\limits_{k \to \infty} \norm{\pk-\p^*}_{\curl,\Omega} = 0 \ \text{and}\  \lim\limits_{k \to \infty} \norm{\uk-\u^*}_{0,\Omega} = 0\,.
\end{equation}
\end{thm}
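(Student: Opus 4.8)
The plan is to follow the limiting-space philosophy of \cite{morin2008basic}, first establishing strong convergence of the adaptive triplets to the solution of an auxiliary \emph{limiting} control problem, and then identifying that limiting solution with the exact one via the vanishing of the residuals. First I would record that the discrete triplets are uniformly bounded: coercivity of $B$ applied to \eqref{kkt2:1} and \eqref{kkt2:2}, together with the uniform $\textit{\textbf{L}}^2$-bound on $\uk$ coming from the pointwise representation \eqref{eq:disproj} and the bound on $\mathbb{P}_k\pk$, gives a uniform bound on $\norm{\yk}_{\curl,\Om}+\norm{\pk}_{\curl,\Om}+\norm{\uk}_{0,\Om}$. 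Hence, along a subsequence, $\yk\weak\y_\infty$ and $\pk\weak\p_\infty$ in $\V$ and $\uk\weak\u_\infty$ in $\textit{\textbf{L}}^2(\Om)$. Since the meshes are generated by bisection they are nested, so the discrete spaces are nested as well, and I would introduce the limiting spaces $\V_\infty:=\overline{\bigcup_{k\ge0}\V_k}$ (closed in $\norm{\cdot}_{\curl,\Om}$) and $\U_\infty:=\overline{\bigcup_{k\ge0}\U_k}$ (closed in $\textit{\textbf{L}}^2(\Om)$), with $\U_\infty^{ad}:=\U_\infty\cap\U^{ad}$.

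The next step is to pass the discrete optimality system \eqref{kkt2:1}--\eqref{kkt2:3} to the limit and to characterise $(\y_\infty,\p_\infty,\u_\infty)$ as the unique solution of a limiting control problem posed over $\V_\infty\times\U_\infty^{ad}$; this is the content announced as Theorems \ref{thm:limweak} and \ref{limiting}. The delicate ingredient is the control relation \eqref{eq:disproj}: because the edge-element space for the state and the piecewise-constant space for the control are inconsistent, the discrete variational inequality involves $\mathbb{P}_k\pk$ rather than $\pk$, and one must show that $\mathbb{P}_k$ converges strongly (on the relevant sequences) to the $\textit{\textbf{L}}^2$-projection $\mathbb{P}_\infty$ onto $\U_\infty$. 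This is exactly the convergence property of $\textit{\textbf{L}}^2$-projections in Proposition \ref{average}. Combined with the nonexpansiveness of $\mathbb{P}_{\U^{ad}}$, it upgrades the weak convergence of the controls to strong $\textit{\textbf{L}}^2$-convergence $\uk\to\u_\infty$; the states and adjoint states then converge strongly in the energy norm by the coercivity of $B$, the density of $\bigcup_k\V_k$ in $\V_\infty$, and the Lipschitz dependence of the control-to-state map in \eqref{constraint}.

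The crux of the argument, and the step I expect to be hardest, is to show that the strong limit $(\y_\infty,\p_\infty,\u_\infty)$ solves the \emph{full} continuous optimality system \eqref{kkt1:1}--\eqref{kkt1:3}, and not merely its restriction to the limiting spaces (Theorem \ref{thm:mainresult}). The obstruction is that the reliability bound of Theorem \ref{reliability} only controls the error by the \emph{global} estimator $\h{\eta}_k$, which need not vanish; one cannot conclude convergence from reliability alone. Instead I would show that the state and adjoint residuals, tested against any fixed $\phii\in\V$, and the variational inequality \eqref{kkt1:3}, tested against any fixed $\u\in\U^{ad}$, are annihilated in the limit. The marking condition \eqref{eq:algocond} always refines an element realising $\max_{T}\h{\eta}_k(T)$, and together with the generalised convergence of the mesh-size functions \eqref{uniformstrong} this yields $\max_{T}\h{\eta}_k(T)\to0$ (Lemma \ref{lem:vaniestimator}) and the vanishing of the state and adjoint residuals (Lemma \ref{btwo}), bypassing the discrete compactness of \cite{kikuchi1989discrete,yousept2013optimal} and the buffer-layer device of \cite{xu2017convergent}. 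The genuinely new difficulty compared with the unconstrained or variationally discretised settings is to drive to zero, simultaneously, the inconsistency estimator $\eta_{p,T}^{(3)}=\norm{\pk-\mathbb{P}_k\pk}_{0,T}$ and the low-order data oscillation $\osc_k(\u^d)$: since both are embedded in $\h{\eta}_k$, any persisting (eventually unrefined) element with positive oscillation would keep $\max_T\h{\eta}_k(T)$ bounded away from zero, so the marking forces every such element to be refined and hence forces $\osc_k(\u^d)\to0$ on the whole domain, which is what makes the limiting variational inequality coincide with \eqref{kkt1:3}.

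Finally, once Theorem \ref{thm:mainresult} gives that $(\y_\infty,\p_\infty,\u_\infty)$ satisfies \eqref{kkt1:1}--\eqref{kkt1:3}, the uniqueness of the solution of the continuous optimal control problem—guaranteed by the strict convexity and weak lower semicontinuity of $J$ recorded in Section \ref{problem}, equivalently by the projection formula \eqref{eq:conproj}—forces $(\y_\infty,\p_\infty,\u_\infty)=(\y^*,\p^*,\u^*)$. As this limit is independent of the chosen subsequence, a standard subsequence argument promotes the convergence to the whole sequence, and the strong convergences already obtained in the limiting-problem step yield precisely $\norm{\yk-\y^*}_{\curl,\Om}\to0$, $\norm{\pk-\p^*}_{\curl,\Om}\to0$ and $\norm{\uk-\u^*}_{0,\Om}\to0$, which completes the proof.
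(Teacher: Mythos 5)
Your plan reproduces the paper's architecture faithfully (limiting spaces and limiting control problem as in Theorems \ref{thm:limweak} and \ref{limiting}, Proposition \ref{average}, Lemmas \ref{lem:vaniestimator} and \ref{btwo}, Theorem \ref{thm:mainresult}, then uniqueness), but the mechanism you give for the crux step --- passing from the limiting variational inequality \eqref{kkt3:3} to the continuous one \eqref{kkt1:3} --- does not close. You argue that the marking forces $\osc_k(\u^d)\to 0$ \emph{globally} and that this is ``what makes the limiting variational inequality coincide with \eqref{kkt1:3}.'' Lemma \ref{lem:vaniestimator} only controls the \emph{maximal local} indicator; the paper itself cautions (in the remark closing Section \ref{converge}) that the global oscillation $\{\osc_k(\u^d)\}$ need not vanish for $\u^d\in\textit{\textbf{L}}^2(\Omega)$. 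More importantly, even if the global oscillation did vanish, the quantity you must kill is the pairing $(\pk+\alpha(\uk-\u^d),\u-\mathbb{P}_k\u)_{0,\Omega}$, whose crude bound also involves the \emph{global} inconsistency term $\norm{\pk-\mathbb{P}_k\pk}_{0,\Omega}$; by Theorem \ref{limiting} and Proposition \ref{average} this converges to $\norm{\p_\infty^*-\mathbb{P}_\infty\p_\infty^*}_{0,\Omega}$, which has no reason to be zero. The paper's actual device is different: it uses the orthogonality of $\mathbb{I}-\mathbb{P}_k$ to rewrite the pairing as $((\mathbb{I}-\mathbb{P}_k)(\pk-\alpha\u^d),(\mathbb{I}-\mathbb{P}_k)\u)_{0,\Omega}$ for a smooth test function $\u\in\D_+(\Omega)$, applies the Poincar\'e inequality to $(\mathbb{I}-\mathbb{P}_k)\u$ to gain a factor $h_T$, arriving at $\sum_T h_T\,\h{\eta}_k(T)\norm{\nabla\u}_{0,T}$, and only then splits $\T{k}$ into $\T{l}^+$ (controlled by Lemma \ref{lem:vaniestimator}) and $\T{k}\setminus\T{l}^+$ (controlled by $\norm{h_l}_{\infty,\Omega_l^0}\to 0$ together with the uniform bound \eqref{aux_eq13} on $\sum_T\h{\eta}_k^2(T)$). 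Without that extra power of $h_T$ the sum over the perpetually refined region is merely bounded, not small, and your argument stalls.

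Two smaller points. Your boundedness argument for $\{\uk\}$ via \eqref{eq:disproj} is circular: bounding $\mathbb{P}_k\pk$ requires bounding $\pk$, hence $\yk$, hence $\uk$, and the resulting self-estimate only closes when $\alpha$ is large; the paper instead uses the minimizing property $J_k(\uk)\le J_k(\mathbb{P}_k\u)$ for a fixed $\u\in\U^{ad}$, valid for every $\alpha>0$. Likewise, nonexpansiveness of $\mathbb{P}_{\U^{ad}}$ cannot upgrade weak convergence of the controls to strong convergence (it would again be circular, since strong convergence of $\mathbb{P}_k\pk$ presupposes strong convergence of $\uk$); the paper obtains Theorem \ref{limiting} by combining the weak convergence of Theorem \ref{thm:limweak} with convergence of the cost values $J_k(\uk)\to J_\infty(\u_\infty^*)$, through the expansion of $\norm{\curl\yk-\curl\y_\infty^*}^2_{0,\Omega}+\alpha\norm{\uk-\u_\infty^*}^2_{0,\Omega}$.
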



As already pointed out in the introduction, the first step of the proof of convergence is the introduction of a limiting minimization problem that characterizes the limit of the discrete solutions to the system \eqref{kkt2:1}-\eqref{kkt2:3} while the second step is to show that the solution to the limiting problem actually coincides with the one to \eqref{kkt1:1}-\eqref{kkt1:3}. To do so,
let us first state some helpful auxiliary results on the convergence behavior of the adaptive meshes
 $\{\mathscr{T}_k\}_{k \ge 0}$.
We associate each triangulation $\mathscr{T}_k$ with a mesh-size function $h_k \in  L^\infty(\Omega)$,  defined by $h_{k}|_T = h_T$ for $T\in \mathscr{T}_k$. Thanks to the monotonicity of $\{h_k\}_{k \ge 0}$, we are allowed to define a limiting mesh-size function by the pointwise limit: $h_k(\x) \to h_\infty(\x)$, as $k \rightarrow \infty$.  
\mb{It is also known that} the pointwise convergence of $\{h_k\}$ can be \mb{further improved} to the uniform convergence \cite[Lemma 4.2]{nochetto2009theory}:
\begin{equation} \label{uniform}
\lim\limits_{k \rightarrow \infty} \norm{h_k-h_\infty}_{\infty,\Omega} = 0\,.
\end{equation}
We should note that $h_\infty(\x) \not\equiv 0$ in general. If $h_\infty (\x) > 0$ at some point $\x$, there is an element $T$ containing $\x$ and an index $k(\x)$ depending on $\x$ such that $T \in \mathscr{T}_l$ for all $l\geq k(\x)$. This observation motivates us to split $\mathscr{T}_k$ into two classes of elements:
\begin{equation} \label{eq:splitting}
\mathscr{T}_k^+ := \bigcap\limits_{l \geq k} \mathscr{T}_l \quad \text{and}\quad
\mathscr{T}_k^0 :=\mathscr{T}_k \backslash {\mathscr{T}_k}^+\,.
\end{equation}
$\mathscr{T}_k^+$ consists of the elements in $\mathscr{T}_k$ that are not refined after the $k$th iteration, while $\mathscr{T}_k^0\subset \mathscr{T}_k$ contains the elements that are refined at least once in the subsequent iterations.
We are thus allowed to 
 decompose the domain $\Omega$ into two parts: $\Omega_k^+ := \bigcup_{\sss T \in \mathscr{T}_k^+}T$ and $\Omega_k^0 := \bigcup_{\sss T \in \mathscr{T}_k^0}T$. A direct application of (\ref{uniform}) and the uniform shape regularity of $\{\mathscr{T}_k\}$ yields the following result (cf.\,\cite[Corollary 7.1]{nochetto2009theory}):
\begin{equation}\label{uniformstrong} 
\lim\limits_{k \rightarrow \infty} \norm{h_k}_{\infty,\widetilde{\Omega}_k^0} = 0 \,,
\end{equation}
where $\widetilde{\Omega}_k^0:=\bigcup_{\sss T\in  \mathscr{T}_k^0} \widetilde{\Omega}_T$ is the extended neighborhood of $\Omega_k^0$. 
We remark that this uniform convergence result for the mesh-size functions is crucial for our subsequent analysis. We next give 
an interesting characterization of the limiting behavior of $L^2$-projections $\{\mathbb{P}_k\}_{k \ge 0}:= \{\mathbb{P}_{\U_{k}}\}_{k \ge 0}$ with the help of the convergence property of the adaptive meshes $\{\mathscr{T}_k\}_{k \ge 0}$, which establishes a connection between the limit of mesh-size functions, the $L^2$-projections and the limiting problem that we shall propose and deal with in the next section.
\begin{prop}\label{average}
Let $\{\mathbb{P}_k\}_{k \ge 0}$ be the orthogonal $L^2$-projections (defined by \eqref{averoper}) associated with the adaptive meshes $\{\mathscr{T}_k\}_{k \ge 0}$ generated by Algorithm \ref{alg:Framwork}. Then for each $\f \in \textit{\textbf{L}}^2(\Omega)$, the limit of the sequence $\{\mathbb{P}_k \f\}_{k \ge 0}$, denoted by $\mathbb{P}_\infty \f\,,$ exists as $k \rightarrow \infty$. Furthermore, the corresponding limiting operator $\mathbb{P}_\infty$ is also an orthogonal $L^2$-projection with the range and kernel given by
\begin{equation} \label{eq:ranandker}
    {\rm ran}(\mathbb{P}_\infty) = \U_\infty := \overline{\bigcup_{k \ge 0}\U_k}^{L^2} \quad  \text{and} \quad \ker(\mathbb{P}_\infty) = \U_\infty^\bot\,.
\end{equation}
\end{prop}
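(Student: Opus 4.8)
The plan is to recognize the statement as the strong convergence of orthogonal projections onto an increasing chain of closed subspaces, and to exploit the nestedness of the control spaces produced by bisection. First I would observe that, since $\mathscr{T}_{k+1}$ is obtained from $\mathscr{T}_k$ by bisection and is therefore a refinement, every element-wise constant function on $\mathscr{T}_k$ remains element-wise constant on $\mathscr{T}_{k+1}$. This yields the chain of inclusions $\U_0 \subseteq \U_1 \subseteq \cdots \subseteq \U_\infty$, where $\U_\infty := \overline{\bigcup_{k \ge 0}\U_k}^{L^2}$ is a closed subspace of $\textit{\textbf{L}}^2(\Omega)$. The inclusion $\U_k \subseteq \U_m$ for $m \ge k$ immediately gives the projection identities $\mathbb{P}_k \mathbb{P}_m = \mathbb{P}_m \mathbb{P}_k = \mathbb{P}_k$, which are the only structural facts the rest of the argument needs.

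Next I would establish existence of the limit by a monotonicity-plus-Cauchy argument. For any $\f \in \textit{\textbf{L}}^2(\Omega)$, the scalars $\norm{\mathbb{P}_k \f}_{0,\Omega}$ form a nondecreasing sequence bounded above by $\norm{\f}_{0,\Omega}$, hence convergent. For $m \ge k$, the vector $\mathbb{P}_m \f - \mathbb{P}_k \f$ lies in $\U_m$ and is orthogonal to $\U_k$, since $(\mathbb{P}_m \f, v)_{0,\Omega} = (\f, v)_{0,\Omega} = (\mathbb{P}_k \f, v)_{0,\Omega}$ for every $v \in \U_k$. The Pythagorean identity then gives $\norm{\mathbb{P}_m \f - \mathbb{P}_k \f}_{0,\Omega}^2 = \norm{\mathbb{P}_m \f}_{0,\Omega}^2 - \norm{\mathbb{P}_k \f}_{0,\Omega}^2 \to 0$ as $k,m \to \infty$. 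Thus $\{\mathbb{P}_k \f\}_{k \ge 0}$ is Cauchy in $\textit{\textbf{L}}^2(\Omega)$ and converges to a limit defining $\mathbb{P}_\infty \f$; linearity and the uniform bound $\norm{\mathbb{P}_\infty} \le 1$ pass to the limit.

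Finally I would verify that $\mathbb{P}_\infty$ is the orthogonal projection onto $\U_\infty$, which also pins down its kernel. The range lies in $\U_\infty$ because each $\mathbb{P}_k \f \in \U_k \subseteq \U_\infty$ and $\U_\infty$ is closed; moreover $\mathbb{P}_\infty$ fixes $\U_\infty$, since any $v \in \U_j$ satisfies $\mathbb{P}_k v = v$ for all $k \ge j$, and this extends to all of $\U_\infty$ by density and continuity. These two facts give idempotency $\mathbb{P}_\infty^2 = \mathbb{P}_\infty$, while self-adjointness follows from passing to the limit in $(\mathbb{P}_k \f, g)_{0,\Omega} = (\f, \mathbb{P}_k g)_{0,\Omega}$. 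A self-adjoint idempotent operator is an orthogonal projection, so $\mathrm{ran}(\mathbb{P}_\infty) = \U_\infty$ and $\ker(\mathbb{P}_\infty) = \U_\infty^\bot$, as claimed.

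The argument is essentially soft functional analysis, so the only genuine geometric input is that bisection produces nested meshes and hence nested spaces $\U_k$. I expect the one step that deserves care to be the orthogonal-decomposition estimate driving the Cauchy property, since it rests on the characterization $(\mathbb{P}_k \f, v)_{0,\Omega} = (\f, v)_{0,\Omega}$ for $v \in \U_k$, which is precisely what converts monotone boundedness of the norms into strong convergence of the projections.
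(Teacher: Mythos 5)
Your proof is correct, but it takes a genuinely different and softer route than the paper's. You reduce everything to the single structural fact that bisection refinements are nested, so that $\U_0 \subseteq \U_1 \subseteq \cdots$, and then invoke the classical Hilbert-space argument: monotonicity of $\norm{\mathbb{P}_k \f}_{0,\Omega}$ plus the Pythagorean identity $\norm{\mathbb{P}_m \f - \mathbb{P}_k \f}_{0,\Omega}^2 = \norm{\mathbb{P}_m \f}_{0,\Omega}^2 - \norm{\mathbb{P}_k \f}_{0,\Omega}^2$ turns boundedness into the Cauchy property, and the identification of $\mathbb{P}_\infty$ as the orthogonal projection onto $\U_\infty$ then follows from self-adjointness and idempotency exactly as you argue. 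The paper instead works concretely with the averaging formula \eqref{averoper}: it first reduces to $\f \in \textit{\textbf{C}}_c^\infty(\Omega)$ by density and uniform boundedness, writes $\mathbb{P}_{k_1}\f - \mathbb{P}_{k_2}\f$ explicitly as a sum of differences of local averages over refined elements, and controls these via the uniform continuity of $\f$ together with the uniform vanishing of the mesh-size function on the refined region \eqref{uniformstrong}; self-adjointness and idempotency are then verified by separate limiting arguments. Your approach is shorter and more general (it works for any nested family of closed subspaces and needs neither the density reduction nor \eqref{uniformstrong}), and it makes the martingale-like structure of the projections transparent. What the paper's computation buys is an explicit link between the convergence of the projections and the convergence of the adaptive meshes through \eqref{uniformstrong} --- the same mechanism that drives the later analysis in Lemmas \ref{lem:vaniestimator} and \ref{btwo} --- but for this proposition alone that machinery is not needed, and your argument is a legitimate, self-contained alternative.
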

\begin{proof}
It suffices to show that the limit of the sequence $\{ \mathbb{P}_k \f\}$ exists for all $\f \in \textit{\textbf{C}}_c^\infty(\Omega)$ (infinitely differentiable vector-valued functions with compact support in $\Om$), by the facts that the operators
$\{\mathbb{P}_k\}$ are uniformly bounded and the space $\textit{\textbf{C}}_c^\infty(\Omega)$ is dense in $\textit{\textbf{L}}^2(\Omega)$. Given two iteration indices $k_1, k_2$ with $k_2 >  k_1$, define $\mathscr{T}_{k_1, k_2}^0 := \mathscr{T}_{k1} \backslash (\mathscr{T}_{k1}\cap \mathscr{T}_{k2})$, which is a subset of  
$\mathscr{T}_{k1}^0$ consisting of the elements that are refined between the $k_1$th iteration and $k_2$th iteration.
We then have, by the equivalent definition of $\mathbb{P}_k$ in \eqref{averoper}, 
\begin{align*}
\norm{\mathbb{P}_{k_1}\f - \mathbb{P}_{k_2}\f}_{0,\Omega}^2 &=\Big \| \sum\limits_{T\in \mathscr{T}_{k_1, k_2}^0}\sum\limits_{T_i \subset T,T_i \in \mathscr{T}_{k_2}}\left(\frac{1}{|T|} \int_T \f(\x) d\x - \frac{1}{|T_i|}\int_{T_i}\f(\x) d\x \right)\chi_{T_i} \Big \|_{0,\Omega}^2 \\
&=\sum\limits_{T\in \mathscr{T}_{k1k2}^0}\sum\limits_{T_i \subset T,T_i \in \mathscr{T}_{k_2}} \left(\frac{1}{|T|} \int_T \f(\x) d\x - \frac{1}{|T_i|}\int_{T_i}\f(\x) d\x \right)^2 |T_i|\,,
\end{align*}
where $\chi_{T_i}$ is the characteristic function of $T_i$.  Recalling the limiting behavior of  mesh-size functions $\{h_k\}$ in  \eqref{uniformstrong}, we have that for any $\delta > 0$, there exists an index $k(\delta)$ depending on $\delta$ such that for all $k > k(\delta)$, there holds $\norm{h_k}_{\infty,\widetilde{\Omega}_k^0}\leq \delta$. 
Combining it with the uniform continuity of $\f$, we have that for any $\varepsilon>0$, there exists an index $k(\varepsilon, \delta)$ depending on $\varepsilon$ and $\delta$ such that for any integers $k_1,k_2$ satisfying $k_2>k_1>k(\varepsilon, \delta)$, 
and for any elements $T \in \mathscr{T}_{k_1, k_2}^0$, $T_i\in \mathscr{T}_{k_2}$ with $T_i \subset T$, it holds that
\begin{equation*}
\Big |\frac{1}{|T|} \int_T \f(\x) d\x - \frac{1}{|T_i|}\int_{T_i}\f(\x) d\x \Big | \leq \varepsilon\,.
\end{equation*}
We hence have that $\{\mathbb{P}_k \f\}$ is a Cauchy sequence in $\textit{\textbf{L}}^2(\Omega)$. Then it follows from the completeness of $\textit{\textbf{L}}^2(\Omega)$ that the limit of $\{\mathbb{P}_k \f\}$ exists.
 We have proved that for any $\f \in \textit{\textbf{L}}^2(\Omega)$, $\mathbb{P}_\infty f$ is well-defined, which further allows us to define the bounded linear operator $\mathbb{P}_\infty$ on $\textit{\textbf{L}}^2(\Omega)$, by the  uniform boundedness principle. We now show that $\mathbb{P}_\infty$ is an orthogonal $L^2$-projection with the property \eqref{eq:ranandker}. To do so, we first observe that for any $\g\in \textit{\textbf{L}}^2(\Omega)$, it holds that
$$(\mathbb{P}_\infty\f,\g)_{0,\Omega} = \lim\limits_{n\to \infty}(\mathbb{P}_k\f,\g)_{0,\Omega} = \lim\limits_{n\to \infty}(\mathbb{P}_k\g,\f)_{0,\Omega} = (\f,\mathbb{P}_\infty\g)_{0,\Omega}\,,$$ 
which implies that $\mathbb{P}_\infty$ is self-adjoint. On the other hand, we have 
\begin{align*}
\norm{(\mathbb{P}_\infty^2-\mathbb{P}_\infty)\f}_{0,\Omega} = &\norm{(\mathbb{P}_\infty^2-\mathbb{P}_k \mathbb{P}_\infty + \mathbb{P}_k \mathbb{P}_\infty - \mathbb{P}_k^2 + \mathbb{P}_k -\mathbb{P}_\infty)\f}_{0,\Omega} \\
\leq &\norm{(\mathbb{P}_\infty-\mathbb{P}_k)(\mathbb{P}_\infty\f)}_{0,\Omega} + \norm{\mathbb{P}_k} \norm{(\mathbb{P}_\infty-\mathbb{P}_k)\f}_{0,\Omega} \\
&+ \norm{(\mathbb{P}_k-\mathbb{P}_\infty)\f}_{0,\Omega} \to \text{0} \ \ \text{as}\  k \to \text{0}\,,
\end{align*}
that is, $\mathbb{P}_\infty^2 = \mathbb{P}_\infty$. Thus, we can conclude  that $\mathbb{P}_\infty$ is an orthogonal $L^2$-projection. To characterize its range and kernel,
we denote by 
$\mathbb{P}$ the orthogonal projection associated with the closed space $\U_\infty$ defined in \eqref{eq:ranandker}. By definition, there holds ${\rm ran}(\mathbb{P})$ = $\U_\infty$ and $\ker(\mathbb{P})$ = $\U_\infty^\bot$.
We readily see from the definition of $\mathbb{P}_\infty$ that ${\rm ran}(\mathbb{P}_\infty) \subset \U_\infty$, since every element in ${\rm ran}(\mathbb{P}_\infty)$ can be approximated by a sequence from $\bigcup_{k \ge 0}\U_k$. Conversely, to prove  $\U_\infty \subset {\rm ran}(\mathbb{P}_\infty)$, we note that for any $ \f \in \U_n$, $\mathbb{P}_k\f = \f$ holds for each $k \ge n$. Then, letting $k \to \infty$ gives us 
$$\mathbb{P}_\infty\f = \lim\limits_{n\to\infty} \mathbb{P}_k\f = \f\,,$$ which indicates $\U_n \subset {\rm ran}(\mathbb{P}_\infty)$.  Since $n$ is arbitrary, we readily see $\overline{\bigcup_{k \ge 0}\U_k}^{L^2} \subset {\rm ran}(\mathbb{P}_\infty)$. The proof is complete.
\end{proof}

\subsection{The limiting problem} \label{sec:limit}
In order to find the limit point of the discrete triplets $\{(\yk,\pk,\uk)\}_{k \ge 0}$, we first define several limiting spaces as the closure of the union of discrete spaces at each level:
\begin{equation}\label{deflim}
\V_\infty := \overline{\bigcup_{k \ge 0}\V_k}^{\textit{\textbf{H}}(\curl)}, \quad \U_\infty^{ad} := \overline{\bigcup_{k \ge 0}\U_k^{ad}}^{L^2}.
\end{equation}
which immediately implies $\U_\infty^{ad} = \U_\infty\cap \U^{ad}$, where the space $\U_\infty$ is defined in \eqref{eq:ranandker}. Since $\U_\infty^{ad}$ is a convex subset of $\textit{\textbf{L}}^2(\Omega)$, it is closed in the weak topology if and only if it is closed in the strong topology induced by the norm. We hence have the following key lemma.
\begin{lem}\label{weak}
Let $\u_k \in \U_k^{ad},\ k\ge 0$, be a sequence weakly converging to a $\u$ in $\textit{\textbf{L}}^2(\Omega)$. Then $\u \in \U_\infty^{ad}$ holds.
\end{lem}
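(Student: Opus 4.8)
The plan is to reduce the lemma to the classical fact, recalled just above its statement, that a convex subset of a Hilbert space is weakly closed if and only if it is strongly closed. The whole argument is short: the only work is to check three hypotheses — that the entire sequence $\{\u_k\}$ lies in $\U_\infty^{ad}$, that $\U_\infty^{ad}$ is convex, and that $\U_\infty^{ad}$ is strongly closed — and then to invoke this equivalence together with the defining property of weakly closed sets.

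First I would note that each $\u_k$ lies in $\U_\infty^{ad}$. Indeed, by the very definition $\U_\infty^{ad} = \overline{\bigcup_{j \ge 0}\U_j^{ad}}^{L^2}$ one has $\U_k^{ad} \subseteq \bigcup_{j \ge 0}\U_j^{ad} \subseteq \U_\infty^{ad}$, so that $\u_k \in \U_k^{ad} \subseteq \U_\infty^{ad}$ for every $k$; thus the whole sequence is contained in $\U_\infty^{ad}$.

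Next I would verify that $\U_\infty^{ad}$ is convex. The cleanest route is through the identity $\U_\infty^{ad} = \U_\infty \cap \U^{ad}$ recorded just after \eqref{deflim}: here $\U_\infty$ is a closed linear subspace of $\textit{\textbf{L}}^2(\Omega)$ — since the adaptive refinement proceeds by bisection, the spaces $\{\U_k\}$ are nested, so $\bigcup_{k}\U_k$ is already a subspace and $\U_\infty$ is its closure — while $\U^{ad} = \{\u \ge \0\}$ is a convex cone. Being the intersection of two convex sets, $\U_\infty^{ad}$ is convex. (Equivalently, the nesting makes $\{\U_k^{ad}\}$ an increasing family of convex sets, so their union, and hence its $L^2$-closure, is convex.) Moreover $\U_\infty^{ad}$ is strongly closed simply because it is defined as an $L^2$-closure.

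With these three facts in hand, the equivalence of weak and strong closedness for convex sets (a consequence of the Hahn--Banach separation theorem, i.e.\ Mazur's lemma) shows that $\U_\infty^{ad}$ is weakly closed. Since $\u_k \weak \u$ in $\textit{\textbf{L}}^2(\Omega)$ and every $\u_k$ belongs to the weakly closed set $\U_\infty^{ad}$, the weak limit satisfies $\u \in \U_\infty^{ad}$, which is the assertion. I do not expect a genuine obstacle here: the only conceptual ingredient is the weak closedness of closed convex sets, which is already quoted in the text, and the remaining verifications are immediate from the definitions and from the nestedness of the adaptively refined meshes. The one point to state carefully is the convexity of $\U_\infty^{ad}$, which is why I would make the identification $\U_\infty^{ad} = \U_\infty \cap \U^{ad}$ explicit rather than argue directly from the (a priori non-convex) union of the $\U_k^{ad}$ without using nestedness.
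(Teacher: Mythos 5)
Your proposal is correct and follows essentially the same route as the paper, which states just before the lemma that $\U_\infty^{ad}$ is convex and strongly closed, hence weakly closed, and treats the lemma as an immediate consequence. The only added value in your write-up is that you make explicit the convexity check (via the nestedness of the $\U_k^{ad}$ or the identity $\U_\infty^{ad}=\U_\infty\cap\U^{ad}$), which the paper leaves implicit.
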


We now consider the following limiting problem defined on the limiting spaces:
\begin{align}
\text{minimize}~~ &  J(\y_\infty ,\u_\infty)=\frac{1}{2}\norm{\curl \y_\infty -\y^d}^2_{0,\Omega}+\frac{\alpha}{2}\norm{\u_\infty-\u^d}^2_{0,\Omega} \label{lim:pro1}\\
\text{over}~~ &  (\y_\infty,\u_\infty) \in \V_\infty \times \U_\infty^{ad} \notag \\
\text{subject  to}~~ & B(\y_\infty , \phii_\infty) = (\f + \u_\infty ,\phii_\infty)_{0,\Omega} \quad \forall \ \phii_\infty \in \V_\infty\,. \label{lim:pro2}
\end{align}
The existence and uniqueness of the minimizer to the above problem can be obtained  by the standard arguments as the continuous and discrete cases, and the reduced cost functional for the limiting problem is defined by
$$J_\infty(\u_\infty) := \frac{1}{2}\norm{\curl \y_\infty(\u_\infty)-\y^d}_{0,\Om}^2+\frac{\alpha}{2}\norm{\u_\infty-\u^d}_{0,\Om}^2\,,$$
where $\y_\infty(\u_\infty)$ denotes the solution to the equation \eqref{lim:pro2} with the source $\u_\infty$. The rest of this subsection is devoted to proving that the limit point of $\{(\yk,\pk,\uk)\}_{k \ge 0}$ exists and happens to be the solution to the limiting optimization problem. As pointed out in the introduction, to compensate the lack of the discrete compactness, we shall investigate the weak limit of the sequence $\{\u^*_k\}_{k \ge 0}$ first, and then improve it to the strong convergence; see the next two theorems.


\begin{thm} \label{thm:limweak}
Suppose that $\{(\uk,\yk)\}_{k \ge 0}$ is the sequence of discrete optimal controls and states generated by
 Algorithm \ref{alg:Framwork}, and $(\u_\infty^*, \y_\infty^*)$ is the optimal control and state to the limiting optimization problem (\ref{lim:pro1})-(\ref{lim:pro2}). Then, we have the following weak convergences: as $k \to \infty$,
\begin{equation*}
\uk\weak \u_\infty^* \ \text{{\rm in}} \ \textit{\textbf{L}}^2(\Omega) \quad  \text{{\rm and}} \quad  \yk\weak \y_\infty^* \ \text{{\rm in}} \ \textit{\textbf{H}}_0(\curl,\Omega)\,. 
\end{equation*}
\end{thm}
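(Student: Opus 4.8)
The plan is to run the standard weak-compactness plus $\Gamma$-convergence argument, using Proposition \ref{average} to manufacture an admissible recovery sequence. First I would establish uniform bounds. Testing the discrete optimality against the admissible competitor $\0 \in \U_k^{ad}$ (note $\0 \ge \0$) gives $J_k(\uk) \le J_k(\0)$; since $\y_k(\0)$ is bounded in $\textit{\textbf{H}}_0(\curl,\Om)$ uniformly in $k$ by coercivity of $B$, the right-hand side is bounded, so $\tfrac{\alpha}{2}\norm{\uk-\u^d}_{0,\Om}^2$ is bounded and $\{\uk\}$ is bounded in $\textit{\textbf{L}}^2(\Om)$. Feeding this back into the state equation \eqref{kkt2:1} bounds $\{\yk\}$ in $\textit{\textbf{H}}_0(\curl,\Om)$. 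By reflexivity I extract a subsequence (not relabeled) with $\uk\weak\bar\u$ in $\textit{\textbf{L}}^2(\Om)$ and $\yk\weak\bar\y$ in $\textit{\textbf{H}}_0(\curl,\Om)$, and Lemma \ref{weak} gives $\bar\u\in\U_\infty^{ad}$.

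Next I would identify the weak limit $\bar\y$ as the limiting state $\y_\infty(\bar\u)$. Fixing any $\phii\in\bigcup_{k}\V_k$, nestedness places it in $\V_k$ for all large $k$, so \eqref{kkt2:1} reads $B(\yk,\phii)=(\f+\uk,\phii)_{0,\Om}$; passing to the weak limit on both sides yields $B(\bar\y,\phii)=(\f+\bar\u,\phii)_{0,\Om}$, and density of $\bigcup_k\V_k$ in $\V_\infty$ extends this to all $\phii\in\V_\infty$. Hence $\bar\y=\y_\infty(\bar\u)$ and $J(\bar\y,\bar\u)=J_\infty(\bar\u)$.

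The heart of the proof is showing that $\bar\u$ minimizes $J_\infty$. Since $\u_\infty^*\in\U_\infty^{ad}\subset\U_\infty=\mathrm{ran}(\mathbb{P}_\infty)$, I would take the recovery sequence $\v_k:=\mathbb{P}_k\u_\infty^*$; these lie in $\U_k^{ad}$ because cell averages of a nonnegative function remain nonnegative, and Proposition \ref{average} gives $\v_k\to\mathbb{P}_\infty\u_\infty^*=\u_\infty^*$ strongly in $\textit{\textbf{L}}^2(\Om)$. A C\'ea-type estimate together with the nested density of $\V_k$ in $\V_\infty$ then upgrades this to $\y_k(\v_k)\to\y_\infty(\u_\infty^*)$ strongly in $\textit{\textbf{H}}_0(\curl,\Om)$, whence $J_k(\v_k)=J(\y_k(\v_k),\v_k)\to J_\infty(\u_\infty^*)$. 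Combining the discrete optimality $J_k(\uk)\le J_k(\v_k)$, the weak lower semicontinuity of $J$ (it is continuous and convex in $(\curl\y,\u)$), and the optimality of $\u_\infty^*$ over $\U_\infty^{ad}\ni\bar\u$, I would chain
\[
J_\infty(\bar\u)=J(\bar\y,\bar\u)\le\liminf_{k\to\infty}J_k(\uk)\le\liminf_{k\to\infty}J_k(\v_k)=J_\infty(\u_\infty^*)\le J_\infty(\bar\u).
\]
Thus $J_\infty(\bar\u)=J_\infty(\u_\infty^*)$, and strict convexity of $J_\infty$ forces $\bar\u=\u_\infty^*$ and $\bar\y=\y_\infty^*$. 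As every subsequence admits a further subsequence with this same limit, the full sequences converge weakly, as claimed.

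The main obstacle I anticipate lies precisely in the recovery sequence: one must produce \emph{admissible} discrete controls $\v_k\in\U_k^{ad}$ that converge strongly to $\u_\infty^*$ while the associated discrete states converge as well. This is where the piecewise-constant discretization and the projection convergence of Proposition \ref{average} are indispensable, since $\mathbb{P}_k\u_\infty^*$ is simultaneously admissible (averaging preserves the sign constraint) and strongly convergent (the mesh-size limit controls the projection error); a higher-order or non-monotone discretization would not automatically deliver both properties, and it is exactly this interplay, rather than any discrete compactness of the edge elements, that drives the convergence.
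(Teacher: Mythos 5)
Your proposal is correct and follows essentially the same route as the paper: uniform bounds from testing against an admissible competitor, weak compactness plus Lemma \ref{weak}, identification of the limit state by passing to the limit on the nested spaces $\V_l$, and a lower-semicontinuity/recovery-sequence argument to identify the limit control. The only cosmetic differences are that you take $\0$ rather than $\mathbb{P}_k\u$ as the competitor for the a priori bound, you make the recovery sequence explicit as $\mathbb{P}_k\u_\infty^*$ via Proposition \ref{average} (the paper gets one implicitly from the closure definition of $\U_\infty^{ad}$), and you conclude via strict convexity after comparing only with $\u_\infty^*$ instead of proving minimality over all of $\U_\infty^{ad}$ --- all of which are sound.
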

\begin{proof}
Our proof starts with a simple but important observation that the sequence $\{\uk\}$ is bounded,
which is not a trivial fact due to the unboundedness of $\U^{ad}$. Indeed, for a fixed $\u \in \U^{ad}$, we have
\begin{align*}
\frac{\alpha}{2} \norm{\uk-\u^d}_{0,\Omega}^2 \leq J_k(\uk) \leq J_k(\mathbb{P}_k\u) &= \frac{1}{2}\norm{\curl \y_k(\mathbb{P}_k\u) -\y^d}_{0,\Omega}^2+\frac{\alpha}{2}\norm{\mathbb{P}_k\u-\u^d}^2_{0,\Omega} \notag\\
& \lesssim \norm{\u}_{0,\Omega}^2 + \norm{\y^d}_{0,\Omega}^2 + \norm{\u^d}_{0,\Omega}^2,
\end{align*}
which gives the boundedness of $\{\uk\}$ \mb{in $\textit{\textbf{L}}^2(\Omega)$}. Then the boundedness of $\{\yk\}$ follows immediately. Hence, by the  Banach-Alaoglu theorem, we can extract a subsequence $\{(\y^*_{k_n},\u^*_{k_n})\}_{n \ge 1}$ of $\{(\yk,\uk)\}_{k \ge 0}$ such that $\u^*_{k_n}\weak \w$ in $\textit{\textbf{L}}^2(\Omega)$ and $\y^*_{k_n}\weak \y$ in  $\textit{\textbf{H}}(\curl,\Omega)$, as $n$ tends to infinity, which further implies that $\{\y^*_{k_n}\}$ and $\{\curl \y^*_{k_n}\}$ weakly converge to $\y$ and $\curl \y$, respectively, in $\textit{\textbf{L}}^2(\Omega)$ \mb{(since $\textit{\textbf{H}}(\curl,\Omega)$ is continuously imbedded in $\textit{\textbf{L}}^2(\Omega)$ and $\curl$ is a bounded linear operator from $\textit{\textbf{H}}(\curl,\Omega)$ to $\textit{\textbf{L}}^2(\Omega)$)}.  
Moreover, Lemma \ref{weak} yields $\w \in \U_{\infty}^{ad}$. Noting $\V_l \subset \V_{k_n}$ for $l \le k_n$ and that there holds
\begin{equation} \label{aux_eq10}
B(\y^*_{k_n},\v_l) = (\f+\u_{k_n}^*,\v_l)_{0,\Omega} \quad \text{for} \ l \leq k_n \ \text{and} \ \v_l \in \V_l\,,
\end{equation}
we let $n$ tend to infinity in \eqref{aux_eq10} and obtain, by the weak convergences of $\{\u_{k_n}^*\}$ and $\{\y_{k_n}^*\}$,
\begin{equation*}
B(\y,\v_l)=(\f+ \w,\v_l)_{0,\Omega} \quad \text{for all} \ l\ge 0 \ \text{and} \  \v_l\in \V_l\,.
\end{equation*}
Since $\bigcup_{l \geq 0}\V_l$ is dense in  $\V_\infty$, we readily have 
\begin{equation} \label{aux_eq11}
B(\y,\v_\infty)=(\f+ \w,\v_\infty)_{0,\Omega}  \quad  \forall \ \v_\infty \in \V_\infty\,.
\end{equation}
which means that $(\y,\w)$ satisfies the constraint (\ref{lim:pro2}) of the limiting problem, that is, $\y = \y_\infty(\w)$.

We claim that $\w$ is the minimizer $\u_\infty^*$ of the cost functional $J_\infty$ over the set $\U_{\infty}^{ad}$, namely, 
\begin{equation} \label{eq:inpoclaim}
    J_\infty(\w) \le J_\infty(\u_\infty) \quad \forall\  \u_\infty\in \U^{ad}_\infty\,, 
\end{equation}
which, by \eqref{aux_eq11}, also implies that the weak limit $\y$ of the sequence $\{\y^*_{k_n}\}$ is $\y_\infty^*$. To prove the claim, we first note
\begin{align}
J_\infty(\w)&=\frac{1}{2}\norm{\curl \y_\infty(\w) -\y^d}_{0,\Omega}^2 + \frac{\alpha}{2}\norm{\w-\u^d}_{0,\Omega}^2 \notag\\
&\leq \liminf_{n \rightarrow \infty} \frac{1}{2}\norm{\curl \y_{k_n}^* -\y^d}^2_{0,\Omega} + \liminf_{n \rightarrow \infty}\frac{\alpha}{2}\norm{\u_{k_n}^*-\u^d}^2_{0,\Omega} = \liminf_{n \rightarrow \infty}J_{k_n}(\u^*_{k_n})\,, \label{eq:inte1}
\end{align}
by the uniform boundedness principle and the weak converges of $\{\curl\y^*_{k_n}\}$  and $\{\u^*_{k_n}\}$  to $\curl\y_\infty(\w)$ and $\w$ in $\textit{\textbf{L}}^2(\Omega)$. Clearly, by \eqref{eq:inte1} and the fact that $\u^*_{k_n}$ is the minimizer of $J_{k_n}$ over $\U^{ad}_{k_n}$, there holds, 
for any sequence, $\u_k\in \U_k^{ad}, k \ge 0$, 
\begin{equation}\label{lx0}
J_\infty(\w) \leq \liminf_{n \rightarrow \infty} J_{k_n}(\u^*_{k_n}) \leq
\limsup_{n \rightarrow \infty} J_{k_n}(\u^*_{k_n}) \leq \limsup_{k \rightarrow \infty} J_k(\u_k)\,.
\end{equation}
\mb{We next prove an auxiliary fact that for any $\u_\infty \in \U^{ad}_\infty$ and a sequence $\u_k\in \U_k^{ad}$ 
for $k \ge 0$}, if $\norm{\u_k - \u_\infty}_{0,\Omega} \to 0$ as $k \to \infty$, then 
\begin{equation} \label{lx3}
    \lim_{k \to \infty} \norm{\y_\infty(\u_\infty)-\y_k(\u_k)}_{\curl,\Omega} = 0\,,
\end{equation}
which readily gives 
\begin{equation} \label{lx4}
    \lim_{k \rightarrow \infty} J_k(\u_k) = J_\infty(\u_\infty)\,.
   \end{equation}
This fact  \eqref{lx4}, along with \eqref{lx0}, completes our proof of the claim \eqref{eq:inpoclaim}.  To prove \eqref{lx3}, we note, by the assumption,  
\begin{align*}
\limsup_{k \rightarrow \infty}\norm{\y_\infty(\u_\infty)-\y_k(\u_k)}_{\curl,\Omega}&\leq \limsup_{k \rightarrow \infty}(\norm{\y_\infty(\u_\infty)-\y_k(\u_\infty)}_{\curl,\Omega}+\norm{\y_k(\u_{\infty})-\y_k(\u_k)}_{\curl,\Omega}) \notag \\
&\lesssim \limsup_{k \rightarrow \infty}(\inf_{\v_k \in V_k} \norm{\y_\infty(\u_{\infty}) -\v_k}_{\curl,\Omega}+\norm{\u_{\infty} -\u_k}_{0,\Omega}) = 0\,,
\end{align*}
where we have used the Galerkin orthogonality and the density of $\bigcup_{k \ge 0}\V_k$ in $\V_\infty$.

\mb{By the above arguments, we can conclude} that for any subsequence $\{(\u^*_{k_n},\y^*_{k_n})\}_{n \ge 1}$ of $\{(\u_k^*,\yk)\}_{ k \ge 0}$, we can extract a subsequence weakly converging to $(\u_\infty^*, \y_\infty^*)$ in $\textit{\textbf{L}}^2(\Omega) \times \textit{\textbf{H}}_0(\curl,\Omega)$, which immediately yields the weak convergence of the whole sequence $\{(\u_k^*,\yk)\}_{ k \ge 0}$: 
 \begin{equation*}
    \uk\weak \u_\infty^* \ \text{{\rm in}} \ \textit{\textbf{L}}^2(\Omega)\ \  \text{{\rm and}} \ \    \yk\weak \y_\infty^* \ \text{{\rm in}} \ \textit{\textbf{H}}_0(\curl,\Omega)\,,\quad \text{as}\ k \to \infty.
\end{equation*}
The proof is complete.
\end{proof}
Thanks to the above results, we are ready to show the main theorem of this subsection.
\begin{thm}\label{limiting}
Under the same assumptions as in Theorem \ref{thm:limweak}, there holds
\begin{equation}\label{thm:uyk}
\lim\limits_{k \to \infty}\norm{\uk-\u_\infty^*}_{0,\Omega} = 0\    \text{and} \  \lim\limits_{k \to \infty}\norm{\yk-\y_\infty^*}_{\curl,\Omega} = 0\,.
\end{equation}
\end{thm}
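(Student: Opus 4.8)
The plan is to upgrade the weak convergence of Theorem \ref{thm:limweak} to strong convergence by the classical device of first proving that the energies converge, $\lim_{k\to\infty} J_k(\uk) = J_\infty(\u_\infty^*)$, and then exploiting the Radon--Riesz property of the Hilbert space $\textit{\textbf{L}}^2(\Omega)$. The easy half of the energy convergence is the lower bound: since the whole sequence satisfies $\uk \weak \u_\infty^*$ in $\textit{\textbf{L}}^2(\Omega)$ and $\curl\yk \weak \curl\y_\infty^*$ in $\textit{\textbf{L}}^2(\Omega)$ by Theorem \ref{thm:limweak}, the weak lower semicontinuity of the two squared-norm terms in $J$ immediately gives $J_\infty(\u_\infty^*) \le \liminf_{k\to\infty} J_k(\uk)$.

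For the matching upper bound I would construct an admissible recovery sequence through the projections $\mathbb{P}_k$. Observe that $\u_\infty^* \in \U_\infty^{ad} \subset \U^{ad}$ is nonnegative a.e.\ in $\Omega$, and since $\mathbb{P}_k$ is a local averaging operator it preserves nonnegativity, so $\mathbb{P}_k\u_\infty^* \in \U_k^{ad}$. Moreover $\u_\infty^* \in \U_\infty = \mathrm{ran}(\mathbb{P}_\infty)$, whence Proposition \ref{average} yields $\norm{\mathbb{P}_k\u_\infty^* - \u_\infty^*}_{0,\Omega} = \norm{\mathbb{P}_k\u_\infty^* - \mathbb{P}_\infty\u_\infty^*}_{0,\Omega} \to 0$. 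Applying the auxiliary convergence \eqref{lx4} to the sequence $\{\mathbb{P}_k\u_\infty^*\}$ gives $J_k(\mathbb{P}_k\u_\infty^*) \to J_\infty(\u_\infty^*)$, and since $\uk$ minimizes $J_k$ over $\U_k^{ad}$ we have $J_k(\uk) \le J_k(\mathbb{P}_k\u_\infty^*)$, so that $\limsup_{k\to\infty} J_k(\uk) \le J_\infty(\u_\infty^*)$. Combining the two bounds proves $\lim_{k\to\infty} J_k(\uk) = J_\infty(\u_\infty^*)$.

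Next I would isolate the control term. Writing $J_k(\uk) = A_k + B_k$ with $A_k := \tfrac12\norm{\curl\yk - \y^d}_{0,\Omega}^2$ and $B_k := \tfrac{\alpha}{2}\norm{\uk - \u^d}_{0,\Omega}^2$, and $J_\infty(\u_\infty^*) = A_\infty + B_\infty$ analogously, weak lower semicontinuity gives $\liminf_k A_k \ge A_\infty$ and $\liminf_k B_k \ge B_\infty$. Since $A_k + B_k \to A_\infty + B_\infty$, the elementary estimate $\limsup_k B_k \le \lim_k(A_k + B_k) - \liminf_k A_k \le B_\infty$ together with $\liminf_k B_k \ge B_\infty$ forces $B_k \to B_\infty$, i.e.\ $\norm{\uk - \u^d}_{0,\Omega} \to \norm{\u_\infty^* - \u^d}_{0,\Omega}$. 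This norm convergence, together with the weak convergence $\uk - \u^d \weak \u_\infty^* - \u^d$ and the Radon--Riesz property, yields $\norm{\uk - \u_\infty^*}_{0,\Omega} \to 0$. Feeding this strong control convergence into the auxiliary estimate \eqref{lx3} with $\u_k = \uk$ and $\u_\infty = \u_\infty^*$ (so that $\y_k(\u_k) = \yk$ and $\y_\infty(\u_\infty) = \y_\infty^*$) then gives $\norm{\yk - \y_\infty^*}_{\curl,\Omega} \to 0$, which completes the proof.

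The step I expect to be the main obstacle is the upper bound in the energy convergence, because it hinges on a recovery sequence that is simultaneously admissible and strongly convergent: the constraint $\mathbb{P}_k\u_\infty^* \ge \0$ must survive the projection, and the strong convergence $\mathbb{P}_k\u_\infty^* \to \u_\infty^*$ on the adaptively refined meshes is exactly the content of Proposition \ref{average}, which is the crucial new ingredient replacing discrete compactness here. Once the energies converge, the decoupling into the control term and the Radon--Riesz argument are the standard mechanism for turning weak convergence into strong convergence, and the state convergence follows for free from \eqref{lx3}.
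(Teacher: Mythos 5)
Your proposal is correct and follows essentially the same route as the paper: the lower bound comes from weak lower semicontinuity and the weak convergence of Theorem \ref{thm:limweak}, the upper bound from a recovery sequence in $\U_k^{ad}$ converging strongly to $\u_\infty^*$ together with \eqref{lx4} and the minimality of $\uk$, and the state convergence is then read off from \eqref{lx3}. The only cosmetic difference is that the paper carries out the quadratic expansion \eqref{eq:inter3} by hand, which yields the strong convergence of $\uk$ and $\curl\yk$ simultaneously, whereas you split the energy into its two terms and invoke the Radon--Riesz property on the control term -- the expansion in the paper is precisely that argument written out explicitly (and your explicit recovery sequence $\mathbb{P}_k\u_\infty^*$ is a valid instance of the sequence the paper merely asserts to exist).
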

\begin{proof}
We note from the claim \eqref{lx3} in the proof of Theorem \ref{thm:limweak} that the second convergence in \eqref{thm:uyk}
is a consequence of the first one. Hence, it suffices to prove the convergence of $\{\uk\}_{k \ge 0}$.
To this end, a direct calculation gives
\begin{align*}
 &\norm{\curl\yk-\curl\y^*_\infty}^2_{0,\Omega} + \alpha\norm{\u_k^*-\u_\infty^*}^2_{0,\Omega} \notag\\
=&\norm{\curl\yk-\y^d + \y^d  - \curl\y^*_\infty}^2_{0,\Omega} + \alpha\norm{\u_k^*-\u^d+\u^d-\u_\infty^*}^2_{0,\Omega} \notag \\
=&\norm{\curl\yk-\y^d}^2_{0,\Omega} + \norm{\curl\y_\infty^*-\y^d}^2_{0,\Omega} - 2(\curl\yk-\y^d,\curl\y_\infty^*-\y^d)_{0,\Omega}\notag\\
& + \alpha\norm{\uk-\u^d}^2_{0,\Omega} + \alpha\norm{\u_\infty^*-\u^d}^2_{0,\Omega} - 2\alpha(\uk-\u^d,\u_\infty^*-\u^d)_{0,\Omega}\,, \label{eq:inproeq}
\end{align*}
which, by taking the upper limit on both sides and using Theorem \ref{thm:limweak}, implies
\begin{equation} \label{eq:inter3}
    2J_\infty(\u_\infty^*) + \limsup_{k \rightarrow \infty} (\norm{\curl\yk-\curl\y^*_\infty}_{0,\Omega}^2+\alpha\norm{\u_k^*-\u_\infty^*}_{0,\Omega}^2) \leq \limsup_{k \rightarrow \infty} 2J_k(\uk).
\end{equation}
Then it follows that 
\begin{equation} \label{eq:inter2}
    J_\infty(\u_\infty^*) \le \limsup_{k \rightarrow \infty} J_k(\uk)\,.
\end{equation}
We choose a sequence $\u_k\in \U_k^{ad}, k \ge 0$, such that $\norm{\u_k - \u^*_\infty}_{0,\Omega} \to 0$, as $k \to \infty$, and then we can derive, by \eqref{eq:inter2}
and \eqref{lx4},  that
\begin{equation} \label{aux_eq15}
J_\infty(\u^*_\infty) \le \limsup_{k \to \infty} J_k(\u^*_k) \le \lim_{k \to \infty} J_k(\u_k) = J_\infty(\u^*_\infty)\,.
\end{equation}
Combining the above estimate \eqref{aux_eq15} with \eqref{eq:inter3}, we obtain the strong convergence of $\{\u_k^*\}_{k\ge 0}$.
\end{proof}
It is easy to write the optimality system for the limiting problem by a standard argument:
\begin{subequations}
  \begin{empheq}[left=\empheqlbrace]{alignat = 2}
&B(\y ^*_\infty ,\phii_\infty) = (\f+ \u^*_\infty,\phii_\infty)_{0,\Omega}  \quad  &\forall \ \phii_\infty \in \V_\infty\,, \label{kkt3:1}\\
&B(\p ^*_\infty,\varphii_\infty) = (\curl\y ^*_\infty-\y ^d,\curl\varphii_\infty)_{0,\Omega}   \quad  &\forall \ \varphii_\infty \in \V_\infty\,, \label{kkt3:2}\\
&(\p ^*_\infty+\alpha(\u^*_\infty-\u^d),\u_\infty- \u_\infty^*)_{0,\Omega} \geq 0 \quad  &\forall \ \u_\infty \in \U_\infty^{ad}\,,  \label{kkt3:3}
  \end{empheq}
\end{subequations}
and see that the sequence of  discrete triplets $\{(\yk,\pk,\uk)\}_{k \ge 0}$ converges strongly to the solution $(\y_{\infty}^*,\p_{\infty}^*,\u_{\infty}^*)$ to the limiting optimality system (\ref{kkt3:1})-(\ref{kkt3:3}). We remark that the variational inequality (\ref{kkt3:3}) will be used in the next subsection.

\subsection{Proof of convergence}
We have proved the strong convergence of the discrete solutions $\{(\yk,\pk,\uk)\}$ in Section \ref{sec:limit}, where we have only used the structure of the control problem, while the adaptive process does not have an essential involvement. \mb{We shall see that in the subsequent analysis, the error estimator $\h{\eta}_h$ defined in \eqref{eq:toterrind} and the marking requirement \eqref{eq:algocond} in Algorithm \ref{alg:Framwork} play a crucial role. 
To complete the proof of Theorem \ref{thm:converge}, we start with the following key lemma.}





\begin{lem} \label{lem:vaniestimator}
Let $\widetilde{T}_k\in \mathscr{T}_k$ be one of the elements that achieve the maximum value of $\h{\eta}_k(T)$ over $T \in \mathscr{T}_k$, i.e., $\h{\eta}_k(\widetilde{T}_k):= \max\limits_{T\in \mathscr{T}_k}\h{\eta}_k(T)$.
Then we have
\begin{equation}
\lim_{k\rightarrow \infty} \h{\eta}_k(\widetilde{T}_k)=0\,.
\end{equation}
\end{lem}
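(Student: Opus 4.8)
The plan is to reduce the statement to the elements that are genuinely refined and then to show that every ingredient of $\h{\eta}_k$ decays uniformly over those elements, by marrying the uniform decay of the mesh-size function on the refined region (\eqref{uniformstrong}) with the strong convergence of the discrete triplets established in Theorem \ref{limiting}. First I would observe that the marking rule \eqref{eq:algocond} together with the \textbf{REFINE} module forces the maximizer $\widetilde{T}_k$ to be bisected at least once, hence $\widetilde{T}_k \notin \mathscr{T}_k^+$, i.e. $\widetilde{T}_k \in \mathscr{T}_k^0$. Therefore $\h{\eta}_k(\widetilde{T}_k) = \max_{T \in \mathscr{T}_k^0} \h{\eta}_k(T)$, and since $\mathscr{T}_k^0 \subset \widetilde{\Omega}_k^0$, \eqref{uniformstrong} yields $h_T \le \delta_k := \norm{h_k}_{\infty,\widetilde{\Omega}_k^0} \to 0$ and $|T| \lesssim \delta_k^3 \to 0$ for every such $T$. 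It thus suffices to prove $\max_{T \in \mathscr{T}_k^0} \h{\eta}_k(T) \to 0$.

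The key device, which I would record once and reuse, is an absolute-continuity estimate: for a fixed $g \in \textit{\textbf{L}}^2(\Omega)$ one has $\max_{T \in \mathscr{T}_k^0} \norm{g}_{0,T}^2 \le \sup_{|E| \le C\delta_k^3} \int_E |g|^2 \to 0$, since $|T| \to 0$ uniformly and the Lebesgue integral is absolutely continuous. Coupled with the strong limits $\yk \to \y_\infty^*$, $\pk \to \p_\infty^*$ in $\textit{\textbf{H}}(\curl,\Omega)$ and $\uk \to \u_\infty^*$ in $\textit{\textbf{L}}^2(\Omega)$, this controls every volume term. For the differential residuals I would use element-wise inverse estimates to spend the $h_T^{-1}$ against the $h_T$ prefactor: since $\curl\yk$ is piecewise constant and $\mu^{-1}$ piecewise polynomial, $h_T \norm{\curl\mu^{-1}\curl\yk}_{0,T} \lesssim \norm{\curl\yk}_{0,T}$, and the split $\norm{\curl\yk}_{0,T} \le \norm{\curl(\yk - \y_\infty^*)}_{0,\Omega} + \norm{\curl\y_\infty^*}_{0,T}$ sends the first summand to zero by strong convergence and the second by the absolute-continuity estimate applied to $g = \curl\y_\infty^*$; the terms $\eta_{y,T}^{(2)}$, $\eta_{p,T}^{(1)}$, $\eta_{p,T}^{(2)}$ are treated in the same way. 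The purely zeroth-order pieces (such as $h_T\norm{\f}_{0,T}$, $h_T\norm{\sigma\yk}_{0,T}$, $h_T\norm{\div\f}_{0,T}$) are bounded by $\delta_k$ times a convergent or bounded quantity and need no splitting, while $\eta_{p,T}^{(3)} = \norm{\pk - \mathbb{P}_k\pk}_{0,T}$ yields to a Poincar\'e--Wirtinger inequality and the special structure of edge-element functions, giving $\eta_{p,T}^{(3)} \lesssim h_T \norm{\curl\pk}_{0,T} \to 0$.

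For the face residuals I would invoke the scaled trace inequality \eqref{auxeq_trace} (and the discrete trace inequality for piecewise polynomials) to convert each $h_F^{1/2}\norm{[\,\cdot\,]_F}_{0,F}$ into an $\textit{\textbf{L}}^2$-norm over the patch $\omega_F$, whose measure again satisfies $|\omega_F| \lesssim \delta_k^3$; the same convergence-plus-absolute-continuity argument then closes $\eta_{y,F}^{(1)}$, $\eta_{y,F}^{(2)}$, $\eta_{p,F}^{(1)}$, $\eta_{p,F}^{(2)}$. The data oscillations restricted to $\mathscr{T}_k^0$ vanish uniformly as well: $\osc_T(\u^d) = \norm{\u^d - \mathbb{P}_k\u^d}_{0,T} \le \norm{\u^d}_{0,T} \to 0$ (the zero constant competes in the projection, then absolute continuity), whereas $\osc_T(\y^d)$ and $\osc_T(\f)$ carry explicit $h_T$ and $h_F^{1/2}$ factors and use assumption \eqref{aux_assp} to bound the normal-trace jump of $\f - \f_k$. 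Summing up the local bounds yields $\max_{T\in\mathscr{T}_k^0}\h{\eta}_k(T) \to 0$, which is the claim.

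I expect the genuine obstacle to be exactly the differential volume residuals $\eta_{y,T}^{(1)}$, $\eta_{y,T}^{(2)}$ and $\eta_{p,T}^{(1)}$: after the inverse estimate one is left with a first-order quantity such as $\max_{T \in \mathscr{T}_k^0} \norm{\curl\yk}_{0,T}$, and neither ingredient suffices by itself, since global strong convergence does not help because $\Omega_k^0$ need not shrink to measure zero, while absolute continuity does not apply directly because $\yk$ varies with $k$. The resolution is the two-part split above, and the delicate point is to make it uniform over all refined elements at once through the $\sup_{|E| \le C\delta_k^3}$ device and to carry the analogous splitting across interior faces for the jump contributions.
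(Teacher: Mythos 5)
Your proposal is correct and follows essentially the same route as the paper's proof: identify $\widetilde{T}_k\in\mathscr{T}_k^0$ via the marking/refinement rules so that \eqref{uniformstrong} shrinks its patch, bound every estimator contribution by inverse and trace inequalities in terms of patch norms of $(\yk,\pk,\uk)$ plus $h$-weighted data terms, and then combine the strong convergence of Theorem \ref{limiting} with the absolute continuity of the Lebesgue integral. The only cosmetic differences are that you treat $\eta_{p,T}^{(3)}$ via Poincar\'e--Wirtinger and the identity $\sqrt{2}|\nabla\pk|=|\curl\pk|$ where the paper simply uses $\norm{\pk-\mathbb{P}_k\pk}_{0,T}\le 2\norm{\pk}_{0,T}$, and that for $\eta_{y,F}^{(2)}$ the jump $[\gamma_n(\f)]_F$ is not a piecewise polynomial, so (as the paper does) it must be handled separately through the $h_F^{1/2}$ prefactor and the assumption $\gamma_n(\f)\in\textit{\textbf{L}}^2(\partial\Omega_i)$ rather than by a discrete trace inequality.
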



\begin{proof}
\mb{We start with the estimates for $\eta_{y,T}^{\sss (1)}$ and $\eta_{y,F}^{\sss (1)}$ on a given mesh $\T{k}$. A direct application of the inverse estimate and the triangle inequality gives 
\begin{align*}
    \eta_{y,T}^{(1)} \lesssim h_T\norm{\f}_{0,T} + h_T\norm{\u_h^*}_{0,T} + \norm{\curl \y_h^*}_{0,T} + h_T\norm{\y_h^*}_{0,T} \,,
\end{align*}
and, by the assumption of $\mu$ and the trace inequality \eqref{auxeq_trace}, we have 
\begin{align*}
    \eta_{y,F}^{(1)} &\lesssim h_F^{1/2} \norm{(\curl \y_h^*)|_-}_{0,F} + h_F^{1/2}\norm{(\curl \y_h^*)|_+}_{0,F}
     \lesssim  \norm{\curl \y_h^*}_{0,\omega_F}\,.
\end{align*}
We now consider $\eta_{y,T}^{\sss (2)}$ and $\eta_{y,F}^{\sss (2)}$. Similarly, we have 
\begin{align*}
    \eta_{y,T}^{(2)} \lesssim h_T \norm{\div \f}_{0,T} + \norm{\y_h^*}_{0,T}\,,
\end{align*}
and  
\begin{align} \label{aux_eq16}
    \eta_{y,F}^{(2)} \lesssim  h_F^{1/2} \norm{[\gamma_n(\f)]_F}_{0,F} + h_F^{1/2} \norm{[\u^*_h]_F}_{0,F} + h_F^{1/2} \norm{[\y^*_h]_F}_{0,F}\,
\end{align}
by the triangle inequality. Then the trace inequality \eqref{auxeq_trace} gives \
\begin{align} \label{aux_eq18}
    h_F^{1/2} \norm{[\u^*_h]_F}_{0,F} + h_F^{1/2} \norm{[\y^*_h]_F}_{0,F} \lesssim \norm{\u_h^*}_{0,\omega_F} + \norm{\y_h^*}_{\curl,\omega_F} \,,
\end{align}
since for $T \in \omega_F$, $\u_h^*|_T$ is a constant vector and $\y_h^*|_T \in \textit{\textbf{H}}^1(T)$ satisfies $\sqrt{2}|\nabla \y_h^*| = |\curl \y_h^*|$ which can be directly checked by using $\y_h^*|_T = \a_T \times \x + \b_T$ for some $\a_T, \b_T \in \R^3$. It is clear from \eqref{aux_assp} that 
$\norm{[\gamma_n(\f)]_F}_{0,F} > 0$, only if $F \subset \Gamma = \cup_{i = 1}^m \partial \Omega_i \backslash \partial \Omega$. Hence, it follows from \eqref{aux_eq16} and \eqref{aux_eq18} that 
\begin{align*}
    \eta_{y,F}^{(2)} \lesssim h_F^{1/2} \norm{[\gamma_n(\f)]_F}_{0,F \cap \Gamma} + \norm{\u_h^*}_{0,\omega_F} + \norm{\y_h^*}_{\curl,\omega_F}. 
\end{align*}
 The same analysis applies to the other terms in $\eta_k$. Then a careful but straightforward computation shows, for $T \in \T{k}$, 
\begin{align}   \label{mpost}
    \eta_k(T) \lesssim & \norm{\y^*_k}_{\curl,\omega_T} + \norm{\p^*_k}_{\curl,\omega_T} + \norm{\u^*_k}_{0,\omega_T} + h_T\norm{\curl \y^d}_{0,T} \notag \\
& + h_T \norm{\f}_{\div,T} + \sum_{F \in \partial T \cap \Gamma}  h_F^{1/2} \norm{[\gamma_n(\f)]_F}_{0,F}\,.                                                
\end{align}
Here $\omega_T$ denotes the union of elements that share a common face with $T$.} 
Then, it follows from \eqref{mpost} 
and the estimate
\begin{align*}
    \mb{h_{\widetilde{T}_k} \norm{\f}_{\div,\widetilde{T}_k} + \sum_{F \in \partial \widetilde{T}_k \cap \Gamma}  h_F^{1/2} \norm{[\gamma_n(\f)]_F}_{0,F} \lesssim  h^{1/2}_{\widetilde{T}_k} (\norm{\f}_{0,\Omega} + \norm{\div \f}_{0,\Omega} + \norm{[\gamma_n(\f)]_\Gamma}_{0,\Gamma})} 
\end{align*}
that
\begin{align}
\eta_k(\widetilde{T}_k)\lesssim & 
\mb{ h^{1/2}_{\widetilde{T}_k} {\rm C}(\f) } 
+\norm{\y_k^*-\y^*_\infty}_{\curl,\Omega}+\norm{\p_k^*-\p^*_\infty}_{\curl,\Omega}+\norm{\u_k^*-\u^*_\infty}_{0,\Omega} \notag \\& +h_{\widetilde{T}_k}\norm{\curl \y^d}_{0, \widetilde{T}_k}
+\norm{\y_\infty}_{\curl,\omega_{\widetilde{T}_k}}+\norm{\p_\infty}_{\curl,\omega_{\widetilde{T}_k}}+\norm{\u_\infty}_{0,\omega_{\widetilde{T}_k}}, \label{eq:interpr1}
\end{align}
\mb{where ${\rm C}(\f) := \norm{\f}_{0,\Omega} + \norm{\div \f}_{0,\Omega} + \norm{[\gamma_n(\f)]_\Gamma}_{0,\Gamma}$ is well-defined by \eqref{aux_assp}}. Hence, by the definition of $\h{\eta}_h$, we have the estimate for $\h{\eta}_k (\widetilde{T}_k)$:
\begin{equation} \label{eq:interpr2}
    \h{\eta}_k (\widetilde{T}_k) \lesssim \eta_k(\widetilde{T}_k) + \norm{\u^d}_{0,\widetilde{T}_k}\,,
\end{equation}
where $\eta_k(\widetilde{T}_k)$ has been bounded by \eqref{eq:interpr1}.  Since $\widetilde{T}_k$ will be marked in the $(k+1)$th iteration, it holds that 
$$
\widetilde{T}_k \in \T{k}^0 \quad \text{and} \quad
|\omega_{\widetilde{T}_k}|\lesssim  \norm{h_k}_{\infty,\widetilde{\Omega}_k^0}^3 \to 0 \quad \text{as} \ k \to \infty\,,$$
by \eqref{eq:localuni} and the uniform convergence \eqref{uniformstrong} of adaptive meshes. Taking advantage of the absolute continuity of the Lebesgue integral and Theorem \ref{limiting}, we can get the desired vanishing limit of $\{\h{\eta}_k (\widetilde{T}_k)\}$ from \eqref{eq:interpr1} and \eqref{eq:interpr2}  when $k$ tends to infinity. The proof is complete.
\end{proof}

We are now well-prepared to show that 
the limiting state $\y_\infty^*$ and adjoint state $\p_\infty^*$ actually satisfy the variational problems (\ref{kkt1:1}) and (\ref{kkt1:2}), respectively.
It is worth emphasizing that in our proof, there is no need to introduce a buffer layer of elements between the meshes at different levels as in \cite{xu2017convergent}, by virtue of the generalized convergence result of mesh-size functions \eqref{uniformstrong}. 
\begin{lem}\label{btwo}
Suppose that $(\y_\infty^*,\p_\infty^*,\u_\infty^*)$ is the solution to the limiting optimality system \eqref{kkt3:1}-\eqref{kkt3:3}. Then it satisfies the variational problems \eqref{kkt1:1} and \eqref{kkt1:2}, namely, 
\begin{align}
&B(\y_\infty^*,\varphii)=(\f+\u_\infty^*,\varphii)_{0,\Omega} \qquad \qquad \quad\, \,  \forall \ \varphii \in \textit{\textbf{H}}_0(\curl,\Omega)\,,\label{lemb2:1}\\
&B(\p_\infty^*,\varphii)=(\curl\y_\infty^*-\y^d,\curl\varphii)_{0,\Omega} \quad  \forall \ \varphii \in \textit{\textbf{H}}_0(\curl,\Omega)\,.\label{lemb2:2}
\end{align}
\end{lem}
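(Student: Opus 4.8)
The plan is to establish \eqref{lemb2:1} and \eqref{lemb2:2} by showing that the residuals of the limiting state and adjoint state, tested against an \emph{arbitrary} $\varphii\in\textit{\textbf{H}}_0(\curl,\Omega)$, vanish; I will carry out \eqref{lemb2:1} in detail, since \eqref{lemb2:2} is entirely analogous. Fix $\varphii\in\textit{\textbf{H}}_0(\curl,\Omega)$ and set $R:=B(\y_\infty^*,\varphii)-(\f+\u_\infty^*,\varphii)_{0,\Omega}$, a number independent of $k$. The Sch\"oberl quasi-interpolant satisfies $\Pi_k\varphii\in\V_k\subset\V_\infty$, so both \eqref{kkt3:1} and the discrete state equation \eqref{kkt2:1} apply with test function $\Pi_k\varphii$. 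Using \eqref{kkt2:1} to insert the vanishing quantity $B(\yk,\Pi_k\varphii)-(\f+\uk,\Pi_k\varphii)_{0,\Omega}=0$, I would decompose $R=B(\y_\infty^*-\yk,\varphii)+(\uk-\u_\infty^*,\varphii)_{0,\Omega}+r_k$, where $r_k:=B(\yk,\varphii-\Pi_k\varphii)-(\f+\uk,\varphii-\Pi_k\varphii)_{0,\Omega}$. By the strong convergences $\yk\to\y_\infty^*$ in $\textit{\textbf{H}}(\curl,\Omega)$ and $\uk\to\u_\infty^*$ in $\textit{\textbf{L}}^2(\Omega)$ of Theorem \ref{limiting}, the first two terms tend to $0$, so it remains to prove $r_k\to0$.

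To estimate $r_k$ I would reproduce the reliability computation of Theorem \ref{reliability}: decompose $\varphii-\Pi_k\varphii=\nabla\phi_k+\z_k$ via Lemma \ref{interpo} with the local bounds \eqref{inp:es1}--\eqref{inp:es2}, integrate by parts elementwise, and apply the scaled trace inequality \eqref{auxeq_trace}. This produces the localized estimate $|r_k|\lesssim\sum_{T\in\T{k}}\eta_{y,k}(T)\,\norm{\varphii}_{\curl,\widetilde\Omega_T}$, where $\eta_{y,k}(T)$ collects the state element residuals $\eta_{y,T}^{(1)},\eta_{y,T}^{(2)}$ and the state face residuals $\eta_{y,F}^{(1)},\eta_{y,F}^{(2)}$ on $T$ and its faces. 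With the finite-overlap property \eqref{eq:localuni2} and the Cauchy--Schwarz inequality this further gives $|r_k|\lesssim\big(\sum_{T\in\T{k}}\eta_{y,k}(T)^2\big)^{1/2}\norm{\varphii}_{\curl,\Omega}$, reducing the matter to showing that the state part of the squared estimator sum vanishes as $k\to\infty$.

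The decisive step is to split the sum along $\T{k}=\T{k}^+\cup\T{k}^0$ from \eqref{eq:splitting}. On the refined part $\T{k}^0$, I would retain the mesh-size factors carried by the state indicators and combine the uniform decay $\norm{h_k}_{\infty,\widetilde\Omega_k^0}\to0$ of \eqref{uniformstrong} with the boundedness of $\{\yk\},\{\uk\}$ and the fixed data $\f,\y^d$ to send $\sum_{T\in\T{k}^0}\eta_{y,k}(T)^2$ to $0$ (the face-jump terms needing, in addition, the strong convergence of Theorem \ref{limiting}, as discussed below). On the unrefined part $\T{k}^+$, mesh refinement is of no use; instead I would observe that every such $T$ persists in all later meshes and is therefore never marked after step $k$, whence $\eta_{y,k}(T)\le\h{\eta}_k(\widetilde{T}_k)\to0$ by Lemma \ref{lem:vaniestimator}. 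Combining this elementwise decay with the uniform bound $\sum_{T\in\T{k}}\eta_k(T)^2\lesssim\norm{\yk}_{\curl,\Omega}^2+\norm{\pk}_{\curl,\Omega}^2+\norm{\uk}_{0,\Omega}^2+C(\f,\y^d)\lesssim 1$, which follows from \eqref{mpost} and Theorem \ref{limiting}, in a dominated-convergence fashion yields $\sum_{T\in\T{k}^+}\eta_{y,k}(T)^2\to0$. Together these give $r_k\to0$, hence $R=0$, which is exactly \eqref{lemb2:1}.

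For the adjoint equation \eqref{lemb2:2} the same scheme applies, starting from \eqref{kkt3:2} and \eqref{kkt2:2} and the convergence $\pk\to\p_\infty^*$; the only new feature is the additional indicator $\eta_{p,T}^{(3)}=\norm{\pk-\mathbb{P}_k\pk}_{0,T}$, whose global contribution I would control through Proposition \ref{average}, since $\mathbb{P}_k\pk\to\mathbb{P}_\infty\p_\infty^*$ and $\pk\to\p_\infty^*$, and then absorb it into the same refined/unrefined splitting. I expect the principal obstacle to be the unrefined contribution $\sum_{T\in\T{k}^+}\eta_{y,k}(T)^2\to0$: there the mesh-size yields nothing, and one must genuinely exploit the marking requirement \eqref{eq:algocond} via Lemma \ref{lem:vaniestimator} together with the uniform $\ell^2$-bound on the indicators. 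A secondary technical difficulty is the face-jump indicators, whose $h_F^{1/2}$ scaling is consumed by the trace inequality, so that their decay on the refined region must be extracted from the strong $\textit{\textbf{H}}(\curl)$-convergence of Theorem \ref{limiting} rather than from mesh refinement alone.
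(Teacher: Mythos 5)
Your setup — reducing \eqref{lemb2:1} to the vanishing of the discrete residual $r_k$, localizing it via Lemma \ref{interpo} and elementwise integration by parts to $|r_k|\lesssim\sum_{T\in\T{k}}\eta_{y,k}(T)\,\norm{\cdot}_{\curl,\widetilde\Omega_T}$ — matches the paper up to that point. The gap opens at the next step: you replace the local factor by the global norm $\norm{\varphii}_{\curl,\Omega}$ and apply a global Cauchy--Schwarz, which forces you to prove that the \emph{global} sum $\sum_{T\in\T{k}}\eta_{y,k}(T)^2$ tends to zero. That statement is not available and is in general false here. On the refined region your plan to ``retain the mesh-size factors'' fails for precisely the terms that matter: after the inverse estimate, $h_T\norm{\curl\mu^{-1}\curl\yk}_{0,T}\lesssim\norm{\curl\yk}_{0,T}$ and $h_F^{1/2}\norm{[\gamma_t(\mu^{-1}\curl\yk)]_F}_{0,F}\lesssim\norm{\curl\yk}_{0,\omega_F}$ carry no small factor (this is exactly what \eqref{mpost} records), and $\norm{\curl\yk}_{0,\Omega_k^0}$ does not vanish because $|\Omega_k^0|$ need not tend to zero --- e.g.\ if the algorithm keeps refining everywhere, $\Omega_k^0=\Omega$ for all $k$. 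Invoking the strong convergence of Theorem \ref{limiting} does not repair this: it controls $\norm{\curl(\yk-\y_\infty^*)}_{0,\Omega}$, not $\norm{\curl\y_\infty^*}_{0,\Omega_k^0}$, and the latter has no reason to be small. (The $\T{l}^+$ part of your argument, finitely many elements each dominated by $\h{\eta}_k(\widetilde T_k)\to0$, is fine and is also what the paper does.)

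The missing idea is to keep the \emph{local} test-function factor and make it small on the refined region, which requires smoothness of the test function. The paper first reduces, by the operator-norm convergence of the residuals and density, to $\varphii\in\textit{\textbf{C}}_c^\infty(\Omega)$; it then subtracts the standard edge interpolant $\widetilde\Pi_k\varphii$ \emph{before} applying the Sch\"oberl operator, so that the bound reads $|\mathcal{R}_k(\varphii)|\lesssim\sum_T\eta_k(T)\norm{\varphii-\widetilde\Pi_k\varphii}_{0,\widetilde\Omega_T}$. On $\T{k}\backslash\T{l}^+$ one then only needs the \emph{uniform bound} \eqref{aux_eq13} on $\sum_T\eta_k^2(T)$ together with the interpolation estimate \eqref{inpo:sta}, which yields the factor $\norm{h_l}^s_{\infty,\widetilde\Omega_l^0}\to0$ as $l\to\infty$ via \eqref{uniformstrong}. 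In short: the smallness on the refined region must come from the $O(h^s)$ local interpolation error of a smooth test function, not from decay of the estimators; your global Cauchy--Schwarz discards exactly this mechanism, and no density reduction appears in your argument to supply the needed regularity of $\varphii$. The same objection applies to your treatment of the adjoint equation.
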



\begin{proof}
We only prove (\ref{lemb2:1}) since the proof of \eqref{lemb2:2} is similar. For this, 
we first introduce the following residual functionals on $\textit{\textbf{H}}_0(\curl,\Omega)$:  
$$\mathcal{R}(\cdot):=B(\y_\infty^*,\cdot)-(\f+\u_\infty^*,\cdot)_{0,\Omega}\,,$$
and 
$$\mathcal{R}_k(\cdot):=B(\y_k^*,\cdot)-(\f+\u_k^*,\cdot)_{0,\Omega} \,,\ k \ge 0\,.$$
Note that Theorem \ref{limiting} gives us the operator-norm convergence:
\begin{equation} \label{eq:normlim}
    \lim_{k \to \infty}\sup_{\varphii \in \textit{\textbf{H}}_0(\curl,\Omega)}\frac{|\mathcal{R}(\varphii)-\mathcal{R}_k(\varphii)|}{\norm{\varphii}_{\textit{\textbf{H}}_0(\curl,\Omega)}} = 0\,.
\end{equation}
Therefore, to prove $\mathcal{R}$ is actually a zero functional, it is sufficient to show
\begin{equation*}
    \lim_{k\to \infty} \mathcal{R}_k(\varphii) = 0\quad \forall \ \varphii \in \textit{\textbf{C}}_c^\infty(\Omega)\,,
\end{equation*}
by the operator-norm convergence \eqref{eq:normlim} and the density of $\textit{\textbf{C}}_c^\infty(\Omega)$ in $\textit{\textbf{H}}_0(\curl,\Omega)$. Recall the standard interpolation operator $\widetilde{\Pi}_h: \textit{\textbf{H}}^s(\curl,G) \rightarrow \widetilde{\V}_h(G)$ \mb{for $1/2 + \delta \le s \le 1, \delta > 0$, associated with N\'{e}d\'{e}lec's edge elements}, and the corresponding error estimate (cf.\,\cite[Theorem 5.41]{monk2003finite}):
\begin{equation}\label{inpo:sta}
    \norm{\v-\widetilde{\Pi}_h\v}_{\curl,G}\lesssim h^s\left(\norm{\v}_{s,G}+\norm{\curl \v}_{s,G}\right),
    \end{equation}
where $G$ is a polyhedral Lipschitz subdomain of $\Omega$ and 
 $\widetilde{\V}_h(G)$ is the lowest-order conforming edge element space without the specified  boundary condition. Defining $\w:= \varphii - \widetilde{\Pi}_k \varphii \in \textit{\textbf{H}}_0(\curl,\Omega)$ and applying the quasi-interpolation operator $\Pi_k$ introduced in Lemma \ref{interpo} to $\w$, we have, by 
definition of $\mathcal{R}_k$ and $\widetilde{\Pi}_k\varphii,\  \Pi_k \w \in \V_k$, 
\begin{equation*}
\mathcal{R}_k(\varphii) =  B(\yk,\varphii)-(\f+\uk,\varphii)_{0,\Omega} = B(\yk,\w-\Pi_h\w)-(\f+\uk,\w-\Pi_h\w)_{0,\Omega}\,.
\end{equation*}
\mb{By the splitting of the mesh $\T{k}$ introduced in \eqref{eq:splitting}}, a derivation similar to the one for (\ref{reab:1}) and (\ref{reab:2})
gives 
\begin{align} \label{lemb2pr:1}
|\mathcal{R}_k(\varphii)| = |\mathcal{R}_k(\w-\Pi_h\w)| & \lesssim \sum_{T \in \mathscr{T}_k}\left(\eta_{y,T}^{(1)}+\eta_{y,T}^{(2)}\right)\norm{\w}_{\curl,\widetilde{\Om}_T}
+ \sum_{T \in \mathscr{T}_k} \sum_{F \in \partial T \cap \Omega}\left(\eta_{y,F}^{(1)}+\eta_{y,F}^{(2)}\right)\norm{\w}_{\curl,\widetilde{\Om}_T} \notag \\
& \lesssim \sum_{T \in \mathscr{T}_l^+}\eta_k(T)\norm{\varphii - \widetilde{\Pi}_k \varphii}_{0,\widetilde{\Omega}_T}+\sum_{T \in \mathscr{T}_k \backslash \mathscr{T}_l^+}\eta_k(T)\norm{\varphii - \widetilde{\Pi}_k \varphii}_{0,\widetilde{\Omega}_T}\,,
\end{align}
where $l$ is an iteration index less than $k$ (clearly, $\T{l}^+ \subset \T{k}$). \mb{Note that 
\begin{align*}
    \eta_k(T) \le \h{\eta}_k(\widetilde{T}_k) = \max\limits_{T\in \mathscr{T}_k}\h{\eta}_k(T)
\end{align*}
holds for all $T \in \T{l}^+$, where $\widetilde{T}_k$ is as defined in Lemma \ref{lem:vaniestimator}. We also observe from \eqref{inpo:sta} that}
\begin{align*}
    \norm{\varphii - \widetilde{\Pi}_k \varphii}_{0,\widetilde{\Omega}_T} \lesssim \norm{\varphii}_{H^s(\curl,\Om)}
\end{align*}
Hence, by these two observations and Cauchy's inequality, \eqref{lemb2pr:1} implies 
\begin{equation} \label{lemb2pr:2}
    |\mathcal{R}_k(\varphii)| \lesssim  \# (\mathscr{T}_l^+) \h{\eta}_k(\widetilde{T}_k)\norm{\varphii}_{H^s(\curl,\Omega)}+ \Big(\sum_{T \in \mathscr{T}_k \backslash \mathscr{T}_l^+}\eta^2_k(T)\Big)^{1/2}\norm{\varphii - \widetilde{\Pi}_k \varphii}_{0,\widetilde{\Omega}^0_l}\,.
\end{equation}
\mb{To derive the second term in the right-hand side of \eqref{lemb2pr:2}, we have also used another observation, by definition and the shape regularity of the meshes, 
that for any $T \in \mathscr{T}_k \backslash \mathscr{T}_l^+$, there exists an element $T' \in \mathscr{T}_l^0$ such that $T \subset T'$ and the extended neighborhood $\widetilde{\Omega}_T$ of $T$ in $\mathscr{T}_k$ is contained in the extended neighborhood $\widetilde{\Omega}_{T'}$ of $T'$ in $\mathscr{T}_l$, i.e.,
$\widetilde{\Omega}_T \subset \widetilde{\Omega}_{T'}$, which, along with the properties \eqref{eq:localuni2}, allows us to write 
\begin{align*}
    \sum_{T \in \mathscr{T}_k \backslash \mathscr{T}_l^+} \norm{\varphii - \widetilde{\Pi}_k \varphii}^2_{0,\widetilde{\Omega}_T} = \sum_{T' \in \mathscr{T}_l^0} \sum_{T \in \mathscr{T}_k, T \subset T'} \norm{\varphii - \widetilde{\Pi}_k \varphii}^2_{0,\widetilde{\Omega}_T} \le C \sum_{T' \in \mathscr{T}_l^0} \norm{\varphii - \widetilde{\Pi}_k \varphii}^2_{0,\widetilde{\Omega}_{T'}}\,,
\end{align*}
with $C$ independent of the meshes.} For a fixed $l$, by Lemma \ref{lem:vaniestimator},
the first term in \eqref{lemb2pr:2} vanishes when $k$ tends to infinity.
 For the second term in \eqref{lemb2pr:2}, we note from the inequality (\ref{mpost}) and the boundedness of numerical solutions $\{(\yk,\pk,\uk)\} $  that
\begin{equation} \label{aux_eq13}
    \sum_{T \in \mathscr{T}_k \backslash \mathscr{T}_l^+}\eta^2_k(T) \le \sum_{T \in \mathscr{T}_k}\eta^2_k(T) \le C\,,
\end{equation}
where the constant $C$ is independent of $k$. We hence have from \eqref{lemb2pr:2} that
\begin{equation} \label{eq:limrkpr}
    \limsup_{k \rightarrow \infty} |\mathcal{R}_k(\varphii)| \lesssim \limsup_{k \rightarrow \infty} \norm{\varphii - \widetilde{\Pi}_k \varphii}_{0,\widetilde{\Omega}^0_l}\,.
\end{equation}
We note that $\widetilde{\Om}_l^0$ inherits a triangulation from the mesh $\T{k}$ in the sense that $\widetilde{\Om}_l^0 = \bigcup_{T \in \T{k}, T \subset \widetilde{\Om}_l^0} T$. Then the interpolation error estimate \eqref{inpo:sta} gives
\begin{equation*}
    \norm{\varphii - \widetilde{\Pi}_k \varphii}_{0,\widetilde{\Omega}^0_l} \lesssim \norm{h_l}^s_{\infty,\widetilde{\Om}_l^0}\norm{\varphii}_{H^s(\curl,\Om)}\,,
\end{equation*}
which, by \eqref{eq:limrkpr}, implies
\begin{align} \label{aux_eq17}
    \limsup_{k \rightarrow \infty} |\mathcal{R}_k(\varphii)| \lesssim \norm{h_l}^s_{\infty,\widetilde{\Om}_l^0}\norm{\varphii}_{H^s(\curl,\Om)}\,.
\end{align}
Letting $l \to \infty$ in \eqref{aux_eq17} and recalling the uniform convergence of mesh funcitons \eqref{uniformstrong}, we can readily conclude
that for all $\varphii \in \textit{\textbf{C}}_c^\infty(\Omega)$, there holds
\begin{equation*}
\mathcal{R}(\varphii) = \lim_{k\to \infty} \mathcal{R}_k(\varphii) = 0\,,
\end{equation*}
which completes the proof.
\end{proof}

Lemma \ref{btwo} suggests that it suffices to show that $\u_\infty^*$ is actually the minimizer to the cost functional $J(\u)$ over the admissible set $\U^{ad}$. For this, we prove the  following result by exploiting the similar splitting of $\T{k}$ as in Lemma \ref{btwo} and some fundamental properties of $L^2$-projections. 


\begin{thm} \label{thm:mainresult}
Under the same assumptions as in Lemma \ref{btwo},   
$\u_\infty^*$ satisfies the variational inequality:
\begin{equation} \label{aux_eq12}
(\p_\infty^*+\alpha(\u_\infty^*-\u^d),\u-\u_\infty^*)_{0,\Omega}\geq 0  \quad \forall \  \u \in \U^{ad}\,.
\end{equation}
\end{thm}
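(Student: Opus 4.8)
The plan is to deduce the variational inequality \eqref{aux_eq12} over the whole admissible set $\U^{ad}$ from the already available inequality \eqref{kkt3:3} over the limiting set $\U_\infty^{ad}$, by annihilating the components of the test functions that lie in $\U_\infty^\bot$. The mechanism behind this is that both the limiting adjoint state $\p_\infty^*$ and the target control $\u^d$ actually belong to $\U_\infty$, a fact that is precisely what the extra estimator $\eta_{p,T}^{\sss (3)}$ and the data oscillation $\osc_T(\u^d)$ in $\h{\eta}_h$ are designed to enforce; this is where the marking requirement \eqref{eq:algocond} enters in an essential way.

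I would first prove that $\p_\infty^*,\u^d\in\U_\infty$. Fix a stable element, say $T\in\T{l}^+$ for some $l$. For every $k\ge l$ the element $T$ is never refined, so $\mathbb{P}_k$ acts on $T$ as the $L^2(T)$-average; in particular $\osc_T(\u^d)=\norm{\u^d-\mathbb{P}_k\u^d}_{0,T}$ does not depend on $k$, and $\eta_{p,T}^{\sss (3)}=\norm{\pk-\mathbb{P}_k\pk}_{0,T}$ converges, by the strong convergence $\pk\to\p_\infty^*$ (Theorem \ref{limiting}), to the $L^2(T)$-deviation of $\p_\infty^*$ from its average on $T$. Since $\osc_T(\u^d)$ and $\eta_{p,T}^{\sss (3)}$ are both bounded by $\h{\eta}_k(T)\le\h{\eta}_k(\widetilde{T}_k)\to 0$ (Lemma \ref{lem:vaniestimator}), these limits vanish, so $\u^d$ and $\p_\infty^*$ are constant on every stable element. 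Writing $\norm{\u^d-\mathbb{P}_k\u^d}_{0,\Omega}^2=\sum_{T\in\T{k}}\osc_T^2(\u^d)$ and splitting $\T{k}=\T{k}^+\cup\T{k}^0$, the contribution of $\T{k}^+$ therefore vanishes, while that of $\T{k}^0$ tends to $0$ because $\norm{h_k}_{\infty,\widetilde{\Omega}_k^0}\to 0$ (cf.\,\eqref{uniformstrong}), by an argument analogous to that in the proof of Proposition \ref{average}; the same reasoning applies to $\norm{\p_\infty^*-\mathbb{P}_k\p_\infty^*}_{0,\Omega}$. Hence $\mathbb{P}_k\u^d\to\u^d$ and $\mathbb{P}_k\p_\infty^*\to\p_\infty^*$, which together with Proposition \ref{average} gives $\mathbb{P}_\infty\u^d=\u^d$ and $\mathbb{P}_\infty\p_\infty^*=\p_\infty^*$, i.e.\ $\u^d,\p_\infty^*\in\U_\infty$.

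I would then complete the proof by a short orthogonality argument. Since $\u_\infty^*\in\U_\infty^{ad}\subset\U_\infty$ and $\U_\infty$ is a subspace, the limiting multiplier $\llambda_\infty^*:=\p_\infty^*+\alpha(\u_\infty^*-\u^d)$ lies in $\U_\infty$. Given any $\u\in\U^{ad}$, its projection $\mathbb{P}_\infty\u$ belongs to $\U_\infty$ and is nonnegative, being the strong $L^2$-limit of the nonnegative averages $\mathbb{P}_k\u$, so $\mathbb{P}_\infty\u\in\U_\infty^{ad}$; testing \eqref{kkt3:3} with $\u_\infty=\mathbb{P}_\infty\u$ yields $(\llambda_\infty^*,\mathbb{P}_\infty\u-\u_\infty^*)_{0,\Omega}\ge 0$. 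Finally, because $\llambda_\infty^*\in\U_\infty$ while $\u-\mathbb{P}_\infty\u\in\ker\mathbb{P}_\infty=\U_\infty^\bot$ (Proposition \ref{average}), one has $(\llambda_\infty^*,\u-\mathbb{P}_\infty\u)_{0,\Omega}=0$, and hence $(\llambda_\infty^*,\u-\u_\infty^*)_{0,\Omega}=(\llambda_\infty^*,\mathbb{P}_\infty\u-\u_\infty^*)_{0,\Omega}\ge 0$, which is \eqref{aux_eq12}.

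The main obstacle is clearly the first step. The limiting inequality \eqref{kkt3:3} only probes directions inside $\U_\infty^{ad}$, and a priori nothing controls the candidate multiplier in the orthogonal complement $\U_\infty^\bot$; the whole argument rests on upgrading the per-element vanishing of $\eta_{p,T}^{\sss (3)}$ and $\osc_T(\u^d)$ on the stable elements to the global statements $\p_\infty^*,\u^d\in\U_\infty$. The delicate point there is the refined region $\Omega_k^0$, where no elementwise bound is available and one must instead invoke the uniform decay $\norm{h_k}_{\infty,\widetilde{\Omega}_k^0}\to 0$ of the mesh-size functions.
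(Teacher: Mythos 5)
Your proof is correct, but it is organized quite differently from the paper's. The paper stays at the level of the test functions: it uses the density of $\D_+(\Omega)=\{\v\in\textit{\textbf{C}}_c^\infty(\Omega)\,;\ \v\ge\0\}$ in $\U^{ad}$, tests \eqref{kkt3:3} with $\mathbb{P}_k\u$, and then kills the residual pairing $(\p_\infty^*+\alpha(\u_\infty^*-\u^d),\u-\mathbb{P}_k\u)_{0,\Omega}$ by rewriting it (after replacing $(\p_\infty^*,\u_\infty^*)$ by $(\pk,\uk)$) as $((\mathbb{I}-\mathbb{P}_k)(\pk-\alpha\u^d),(\mathbb{I}-\mathbb{P}_k)\u)_{0,\Omega}$, applying the elementwise Poincar\'e inequality to the \emph{smooth} test function to gain a factor $h_T$, bounding $\norm{\pk-\mathbb{P}_k\pk}_{0,T}+\osc_T(\u^d)$ by $\h{\eta}_k(T)$, and concluding with the same $\T{l}^+$ versus $\T{k}\backslash\T{l}^+$ splitting, Lemma \ref{lem:vaniestimator} and \eqref{uniformstrong} that you invoke. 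You instead push the argument onto the multiplier: you upgrade the per-element vanishing of $\eta_{p,T}^{\sss(3)}$ and $\osc_T(\u^d)$ on stable elements (plus a density argument on the shrinking refined region) to the structural statements $\p_\infty^*,\u^d\in\U_\infty$, hence $\llambda_\infty^*\in\U_\infty$, after which \eqref{aux_eq12} follows by exact orthogonality against $\ker\mathbb{P}_\infty=\U_\infty^\bot$ with the test function $\mathbb{P}_\infty\u$. Both routes rest on the same three pillars (Lemma \ref{lem:vaniestimator}, the mesh splitting \eqref{eq:splitting} with \eqref{uniformstrong}, and Proposition \ref{average}), but they trade different technicalities: the paper's weighted estimate needs smooth nonnegative test functions and the density of $\D_+(\Omega)$ in $\U^{ad}$, while yours dispenses with that entirely at the cost of a separate approximation argument to control $\norm{\u^d-\mathbb{P}_k\u^d}_{0,\Omega_k^0}$ and $\norm{\p_\infty^*-\mathbb{P}_k\p_\infty^*}_{0,\Omega_k^0}$ on the refined region, where the paper gets the decay for free from the $h_T$ factor. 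Your version has the advantage of making explicit the clean characterization $\llambda_\infty^*\in\U_\infty$ (equivalently $(\mathbb{I}-\mathbb{P}_\infty)\llambda_\infty^*=\0$), which is only implicit in the paper; one small imprecision is that you cite Theorem \ref{limiting} for the strong convergence $\pk\to\p_\infty^*$, which is actually recorded in the paper only in the remark following the limiting optimality system, though it does follow immediately from Theorem \ref{limiting} and C\'ea's lemma applied to \eqref{kkt3:2}.
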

\begin{proof}
Note that the convex set $\D_+(\Omega):=\{\v \in  \textit{\textbf{C}}_c^\infty(\Omega)\  | \ \v \ge \0 \}$ is dense in $\U^{ad}$ with respect to the $L^2$-norm. It suffices to show that the variational inequality \eqref{aux_eq12} holds for all $\u$ in $\D_+(\Omega)$. However, we have
\begin{equation*}
(\p_\infty^*+\alpha(\u_\infty^*-\u^d),\u-\u_\infty^*)_{0,\Omega}\geq 0  \quad \forall \ \u \in \U_\infty^{ad}\,,
\end{equation*}
since $(\y_\infty^*,\p_\infty^*,\u_\infty^*)$ is the solution to the limiting optimality system (\ref{kkt3:1})-(\ref{kkt3:3}). The above fact, along with the relation $\U_k^{ad} \subset \U_{\infty}^{ad}$, implies that for all $\u \in \D_+(\Omega)$, there holds
\begin{equation}\label{prlo:1}
(\p_\infty^*+\alpha(\u_\infty^*-\u^d),\u-\mathbb{P}_k\u+\mathbb{P}_k\u-\u_\infty^*)_{0,\Omega} \ge (\p_\infty^*+\alpha(\u_\infty^*-\u^d),\u-\mathbb{P}_k\u)_{0,\Omega}.
\end{equation}
It follows from the strong convergence of $\{(\y^*_k,\p^*_k,\u^*_k)\}$ and $\norm{\u-\mathbb{P}_k\u}_{0,\Omega} \le 2\norm{\u}_{0,\Omega}$ that
\begin{equation}\label{prlo:2}
\liminf_{k \rightarrow \infty}  (\p_\infty^*+\alpha(\u_\infty^*-\u^d),\u-\mathbb{P}_k\u)_{0,\Omega} = \liminf_{k \rightarrow \infty} (\pk+\alpha(\uk-\u^d),\u-\mathbb{P}_k\u)_{0,\Omega}\,.
\end{equation}
Noting that $\mathbb{I}-\mathbb{P}_k$ is an orthogonal $L^2$-projection and using the Poincar\'{e} inequality for $(\mathbb{I}-\mathbb{P}_k)\u$:
\begin{equation*}
    \norm{(\mathbb{I}-\mathbb{P}_k)\u}_{0,T} \lesssim h_T \norm{\nabla \u}_{0,T}\,,
\end{equation*}
we derive
\begin{align}
&|(\pk+\alpha (\uk-\u^d),\u-\mathbb{P}_k\u)_{0,\Omega}|=|((\mathbb{I}-\mathbb{P}_k)(\pk-\alpha\u^d),(\mathbb{I}-\mathbb{P}_k)\u)_{0,\Omega}| \notag \\
\lesssim &\sum_{T\in \mathscr{T}_k} (h_T\norm{\pk-\mathbb{P}_k\pk}_{0,T} + h_T\norm{\u^d-\mathbb{P}_k\u^d}_{0,T}) \norm{\nabla \u}_{0,T}\notag\\
\lesssim &\sum_{T\in \mathscr{T}_l^+} h_T \cdot \h{\eta}_k(T) \norm{\nabla \u}_{0,T} +\sum_{T\in \mathscr{T}_k \backslash \mathscr{T}_l^+}h_T \cdot \h{\eta}_k(T) \norm{\nabla \u}_{0,T}\,, \label{eq:estforvi}
\end{align}
by the definition of $\h{\eta}_k(T)$. Here we have used the same splitting of $\T{k}$ as the one in Lemma \ref{btwo} with $l$ being a fixed iteration index less than $k$. Similarly to the proof of Lemma \ref{btwo}, we estimate the two terms in \eqref{eq:estforvi}, respectively, as follows.  For the first term in \eqref{eq:estforvi}, we have
\begin{equation*}
    \sum_{T\in \mathscr{T}_l^+} h_T \cdot \h{\eta}_k(T) \norm{\nabla \u}_{0,T} \lesssim \#(\T{l}^+) \h{\eta}_k(\widetilde{T}_k)\norm{\nabla \u}_{0,\Omega}\,,
\end{equation*}
which, by Lemma \ref{lem:vaniestimator}, vanishes as $k \to \infty$. For the second term in \eqref{eq:estforvi}, note from \eqref{aux_eq13} that 
\begin{align*}
    \sum_{T \in \mathscr{T}_k \backslash \mathscr{T}_l^+}\h{\eta}^2_k(T) = \sum_{T \in \mathscr{T}_k \backslash \mathscr{T}_l^+}\eta^2_k(T) + \osc_T^2(\u^d) \le C
\end{align*}
holds with the constant $C$ independent of $k$. Then, by
 Cauchy's inequality, we have
\begin{equation*} 
    \sum_{T\in \mathscr{T}_k \backslash \mathscr{T}_l^+}h_T \cdot \h{\eta}_k(T) \norm{\nabla \u}_{0,T}\lesssim  \norm{h_l}_{\infty,\Omega_l^0}\norm{\nabla \u}_{0,\Omega}\,,
\end{equation*}
which vanishes when $l$ tends to infinity.
  By above arguments, we have proven 
\begin{equation} \label{prlo:3}
\lim_{k \rightarrow \infty} |(\pk+\alpha(\uk-\u^d),\u-\mathbb{P}_k\u)_{0,\Omega}| = 0\,.
\end{equation}
In view of (\ref{prlo:1}), (\ref{prlo:2}) and \eqref{prlo:3}, we complete the proof, \mb{since the left-hand side of \eqref{prlo:1} is independent of $k$.}
\end{proof}

We end our theoretical analysis for the adaptive approximation of the control problem with an additional remark.
\begin{re}
If the function $\u^d$ has the regularity: $\u^d \in \textit{\textbf{H}}^1(\Omega)$, then using Theorem \ref{thm:converge}, the uniform convergence \eqref{uniformstrong} and the Poincar\'{e} inequality, we can argue in a manner similar to \cite[Theorem 6.2]{xu2017convergent}
to conclude the convergence of the error estimators $\{\h{\eta}_k\}$. However, for a general $L^2$-data $\u^d$, the optimal control $\u^*$ can only be guaranteed to have a $L^2$-regularity, and the data oscillation $\{\osc_k(\u^d)\}$ may not converge to zero in the adaptive process. 
\end{re}

\section{Numerical experiments} \label{experiment}
In this section, we provide a detailed documentation of the numerical results to illustrate the performance of our adaptive algorithm for the optimal control problem. All the examples presented below are implemented based on the MATLAB package iFEM \cite{Chen.L2008c} using MATLAB 2017a on a personal laptop with 8.00 GB RAM and dual-core 2.4 GHz CPU.  When solving the discrete optimization problems \eqref{discrete problem1}-\eqref{discrete problem2},  we use the projected gradient algorithm in which the HX-preconditioner \cite{hiptmair2007nodal} is also adopted for solving the $\textit{\textbf{H}}(\curl)$-elliptic problem. More precisely, 
we start with an initial guess $\u_h^{\sss (0)} \in \U_h^{ad}$, and in $n$th step ($n \ge 1$) we solve two $\textit{\textbf{H}}(\curl)$-elliptic problems to obtain the state $\y_h^{\sss (n)}$ and the adjoint state $\p_h^{\sss (n)}$ with the help of HX-preconditioner. Then the control variable $\u_h$ can be explicitly updated as follows: 
\begin{equation} \label{eq:projstep}
    \u_h^{(n)} = \max\left\{\0, \u_h^{(n-1)} + s\left(-\mathbb{P}_h\p_h^{(n)} - \alpha \left(\u_h^{(n-1)} - \u_h^d\right)\right)\right\}\,,
\end{equation}
where the parameter $s$ can be computed explicitly by solving a one dimensional quadratic optimization problem. Compared to the algorithm in \cite{hoppe2015adaptive} where the edge element was used to approximate the control and an additional least squares problems with inequality constraints need to be solved to realize the box constraint \eqref{eq:addset}, which is very expensive and time-consuming, our algorithm uses only about 
half the number of DoFs for computing the control variable but gives equally accurate numerical resolution, and can be trivially implemented. 


In the following, we carry out some numerical experiments for two benchmark problems. For both examples, we shall first compute the numerical solutions and errors on the uniform meshes with five refinement iterations for the purpose of comparison, and then conduct the new adaptive algorithm, which is terminated when the corresponding DoFs are almost the same as that of the finest uniform mesh. The first example is modified from \cite{zhong_convergence_2010}\cite{duan2016adaptive}, where there is a corner singularity in the solution. 
We consider an optimal control problem on the L-shape domain: $\Omega = [-1,1]^3\backslash[0,1]\times[0,1]\times[-1,1]$  with both the electric permittivity $\sigma$ and the magnetic permeability $\mu$ taken to be $1$. We set the target applied current density (control) $\u^d$ and the target magnetic field (state) $\y^d$ to zero, and set the parameter $\alpha$ to $0.1$. We choose the inhomogeneous Dirichlet boundary condition and the source term $\f$ such that the exact solution are given as follows:
\begin{equation*}
\mb{\u^* = \p^* = 0 \quad \text{and}\  \y^* = {\bf grad}(r^{\frac{2}{3}}\sin(\frac{2}{3}\theta))\ \text{in cylindrical  coordinates}\,.}
\end{equation*}
\begin{figure}[htbp]
\includegraphics[clip,width=0.3\textwidth]{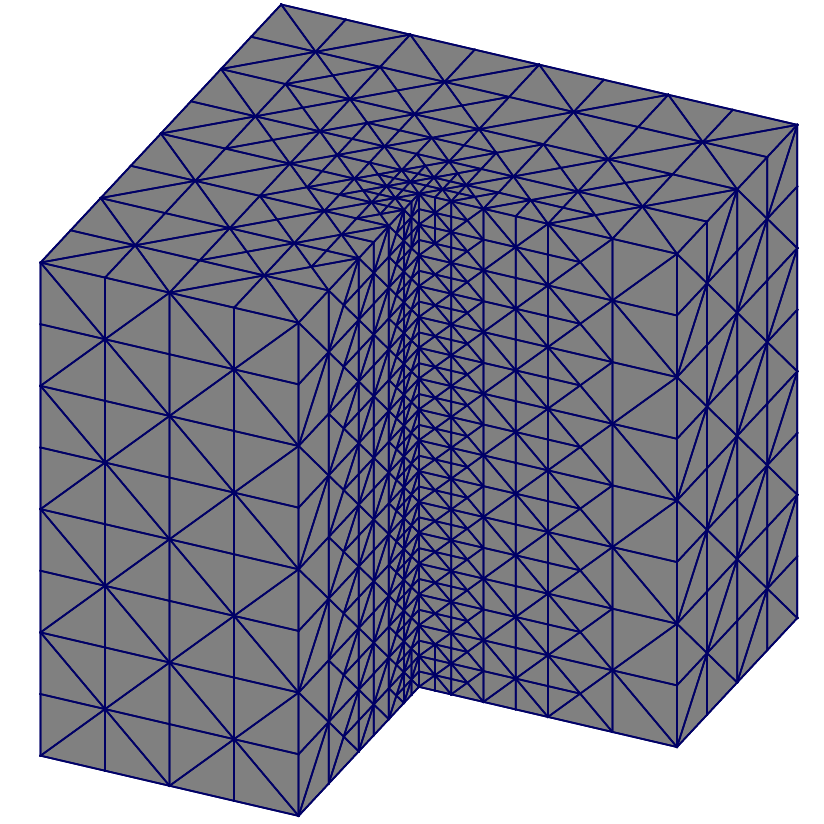}\hfill{}
\hfill{}\includegraphics[clip,width=0.3\textwidth]{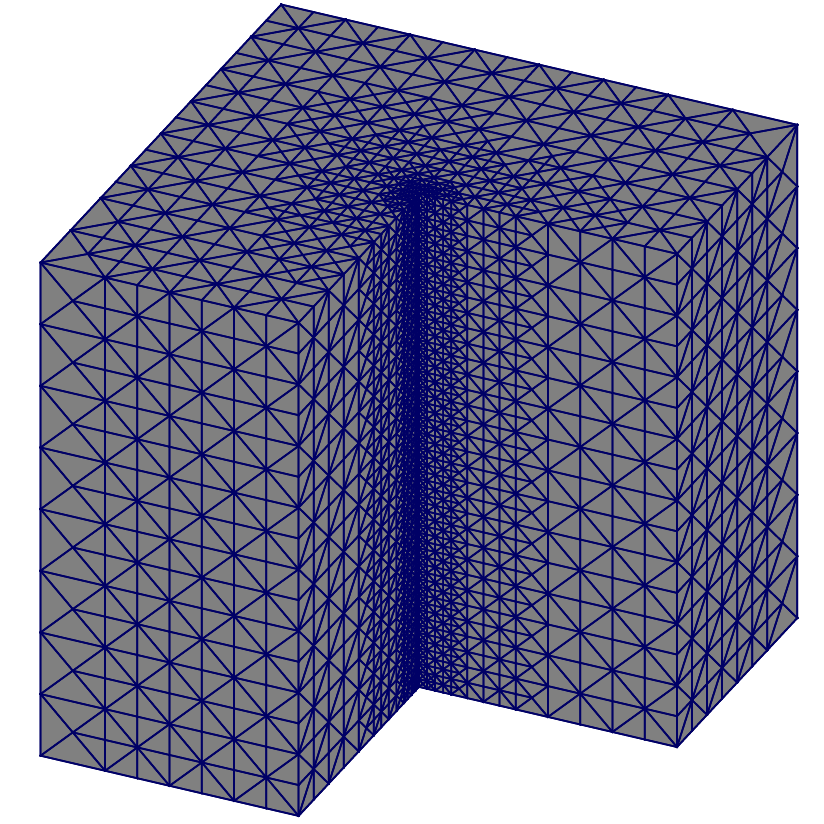}\hfill{}
\hfill{}\includegraphics[clip,width=0.3\textwidth]{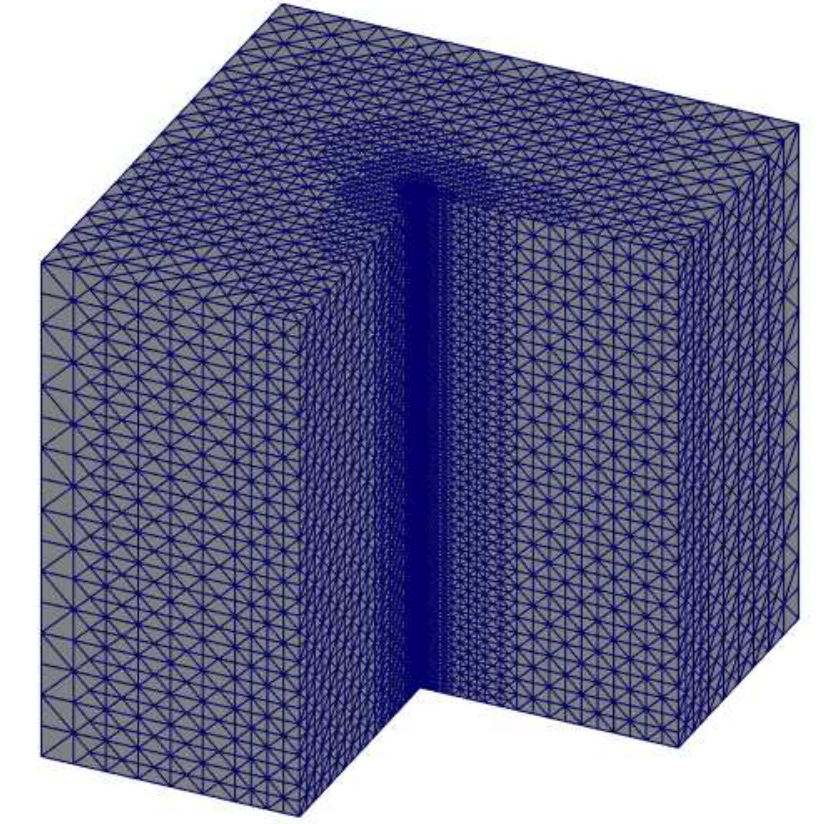}\hfill{}
\caption{\label{fig:adapmeshex1}
Evolution of the adaptive mesh for the first example in steps $k = 10,14$ and $18$ (final mesh).
}
\end{figure}
We set $\theta = 0.5$ in the marking strategy \eqref{mark1} and remark that there is no need to consider the data oscillation $\osc_T(\u^d)$ in this example because of the vanishing desired control $\u^d$.

In our simulations, the computation starts on a very coarse mesh with $480$ DoFs.  The adaptively refined meshes in the 
$10$th, $14$th and $18$th iterations are presented in Figure \ref{fig:adapmeshex1}, in which we clearly observe that the meshes are locally refined around the $z$-axis where the singularity of $\y^*$ occurs.
Note the non-smooth function $\y^* \notin \textit{\textbf{H}}^1(\Omega)$ and that the lowest-order edge elements of first family are used to discretize the state variable. We typically cannot 
expect the optimal convergence order $O(h)$ on a  uniformly refined mesh according to the standard interpolation result (\ref{inpo:sta}). However, this theoretical order of convergence can be recovered by the adaptive mesh refinement in the sense that
\begin{equation*}
    \norm{\yk-\y^*}_{\curl,\Omega} \lesssim (\#{\rm DoFs})^{-1/3}\,,
\end{equation*}
which has been confirmed by the convergence history plotted in Figure \ref{fig:conex1} (left) on a double logarithmic scale. We should also observe that the errors of the control $\norm{\u^*-\uk}_{0,\Omega}$ and the adjoint state
$\norm{\p^*-\pk}_{\curl,\Omega}$ reduce with a slope $-2/3$, which is twice the one of the error associated with the state $\y$.  The difference in the convergence rates may be explained by the fact that $\u^*$ and $\p^*$ are zero functions and hence very smooth while the optimal state $\y^*$ is non-smooth and has a strong singularity near the $z$-axis. Moreover, Figure \ref{fig:conex1} (right) shows the reduction of the total error (almost dominated by the error $\norm{\y^*-\yk}_{\curl,\Om}$):
 \begin{equation*}
 \norm{\y^*-\yk}_{\curl,\Omega}+\norm{\p^*-\pk}_{\curl,\Omega}+\norm{\u^*-\uk}_{0,\Omega}\,,
 \end{equation*}
and the convergence behavior of the a posteriori error estimator $\h{\eta}_h = \eta_h$, which confirms the efficiency of $\h{\eta}_h$ and the superiority of the adaptive mesh refinement over the uniform mesh refinement: the total error on the adaptively refined mesh reduces with an order $-1/3$, double the one ($-0.15$) on the uniformly refined mesh. This is also confirmed by the computing times and computational costs: 
for achieving the error over the mesh generated by the $5$th uniform refinement, 
the adaptive algorithm takes only about $80$ seconds and $10^5$ DoFs, 
whereas the uniform one takes about $580$ seconds and $1410240$ DoFs.

\begin{figure}[htbp]
\hfill{}\\
\hfill{}\includegraphics[clip,width=0.5\textwidth]{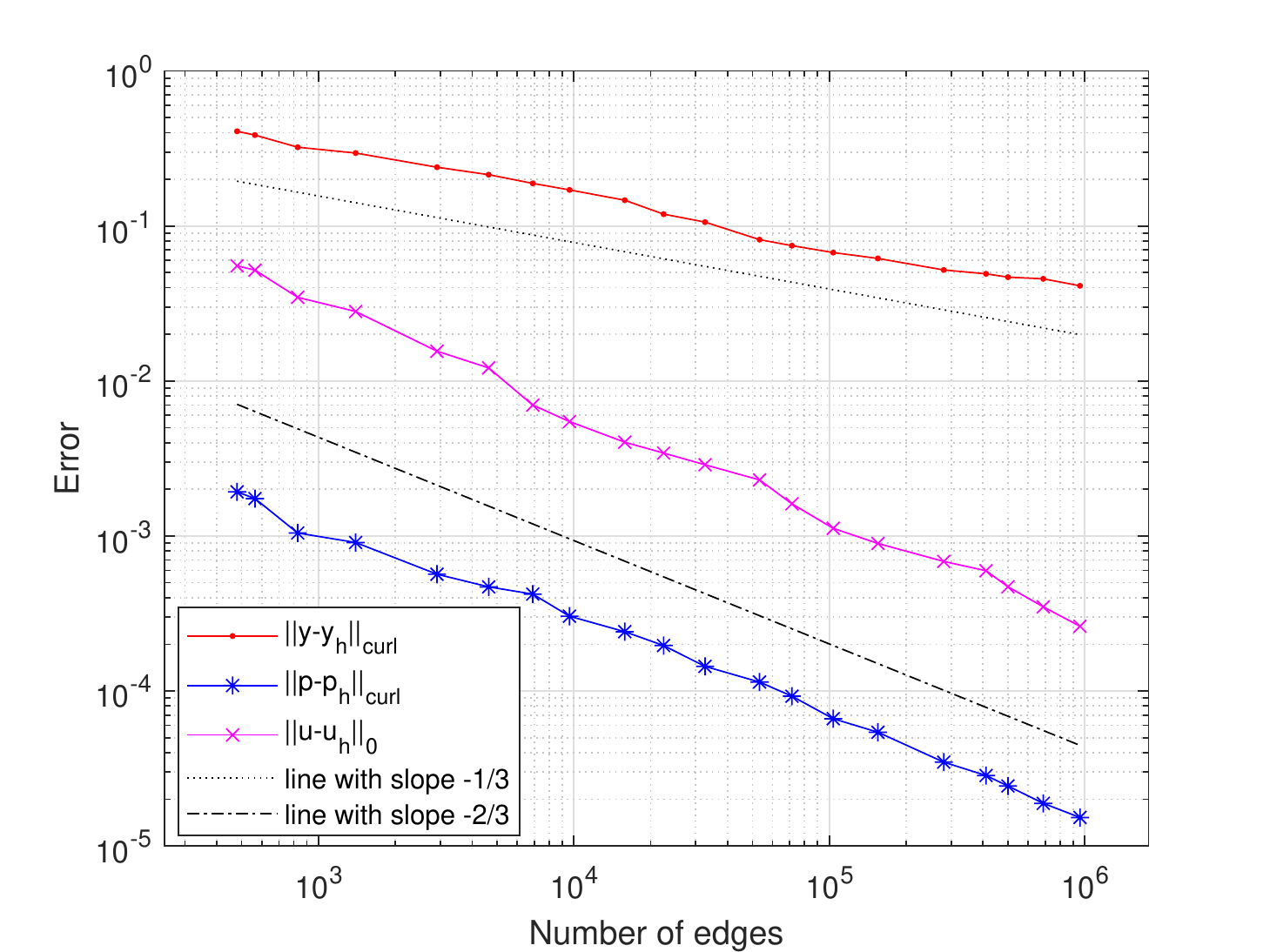}\hfill{}
\hfill{}\includegraphics[clip,width=0.5\textwidth]{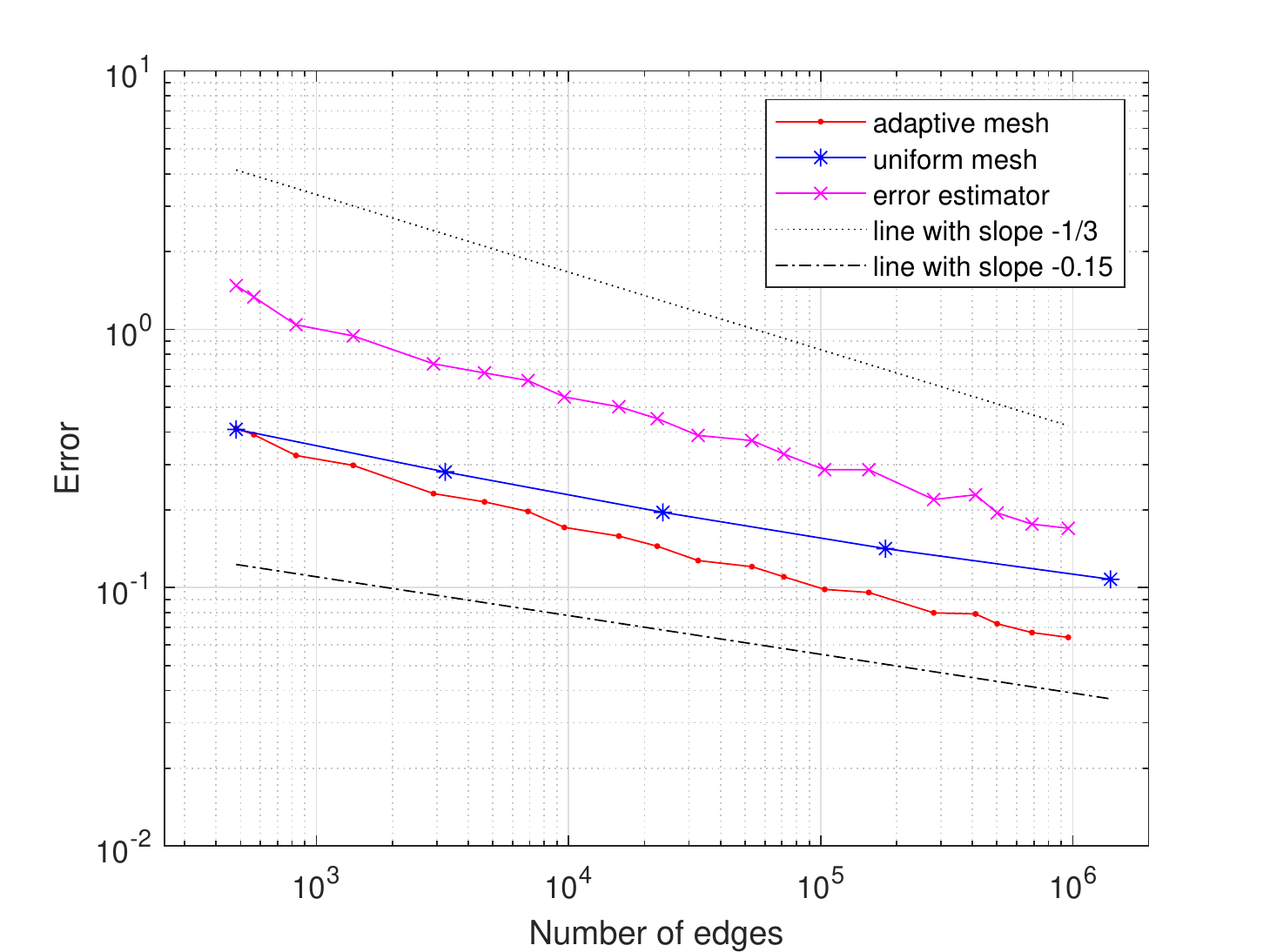}\hfill{}
\caption{\label{fig:conex1} Convergence histories of the control, state and adjoint state (left), and the total errors for the uniform mesh refinement and the adaptive mesh refinement, as well as the error estimator $\h{\eta}_h$ (right) for the first example.}
\end{figure}

The second example is chosen to be similar to the one in \cite{hoppe2015adaptive}, where there are non-smooth source terms and large jumps of physical coefficients across the interface between two different media. To be precise, we consider an optimal control problem on $\Omega = [-1,1]^3$ with a high-contrast inclusion: $\Omega_c := \{\x\in \R^3\,;\ x^2 + y^2 + z^2 < 0.6^2\}$, on which the coefficients $\mu$ and $\sigma$ are given by 
\begin{equation*}
  \sigma =
   \begin{cases}
   10 &\mbox{in\  $\Omega_c$}\,,\\
   1 &\mbox{in\  $\Omega \backslash \Omega_c$}\,,
   \end{cases} \quad \mu^{-1} =
   \begin{cases}
   0.1 &\mbox{in\  $\Omega_c$}\,,\\
   1 &\mbox{in\  $\Omega \backslash \Omega_c$}\,.
   \end{cases}
\end{equation*}
We set the target state $\y^d = \0$ and the 
target control $\u^d  = 10(\chi_{\Omega_c},0,0)$. Here $\chi_{\Omega_c}$ is the characteristic function of $\Omega_c$. We define a scalar function:
\begin{equation*}
\mb{\phi(\x) = \frac{1}{2\pi}\sin(2\pi x)\sin(2\pi y)\sin(2\pi z)\,,}
\end{equation*}
and then introduce the non-smooth  source term $\f$: 
\begin{equation*}
    \f(\x) = \sigma \nabla \phi(\x) - 10 (\chi_{\Omega_c},0,0)\,.
\end{equation*}
We readily see that the unique solution to the optimality system \eqref{kkt1:1}-\eqref{kkt1:3} is given by 
\begin{align*}
    \y^* = \nabla \phi\,, \ \p^* = 0\,, \ \text{and}\ \u^* = 10(\chi_{\Omega_c},0,0)\,.
\end{align*}
Other parameters are chosen to be same as the first example. 

\begin{figure}[htbp]
\includegraphics[clip,width=0.23\textwidth]{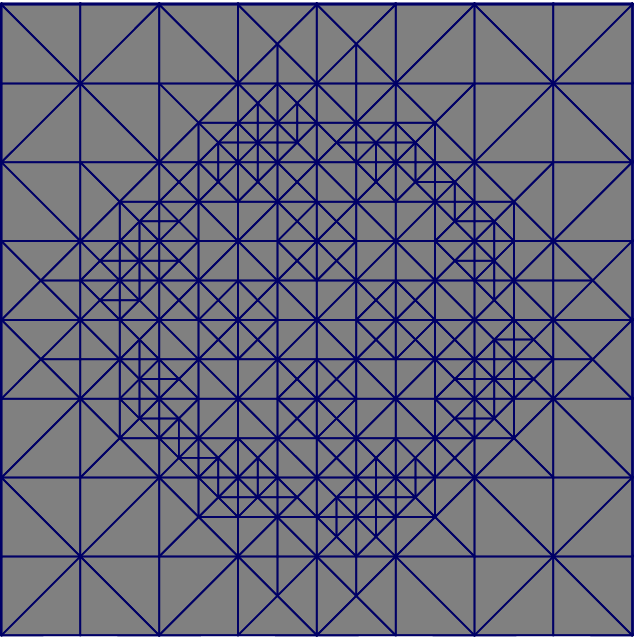}\hfill{}
\includegraphics[clip,width=0.23\textwidth]{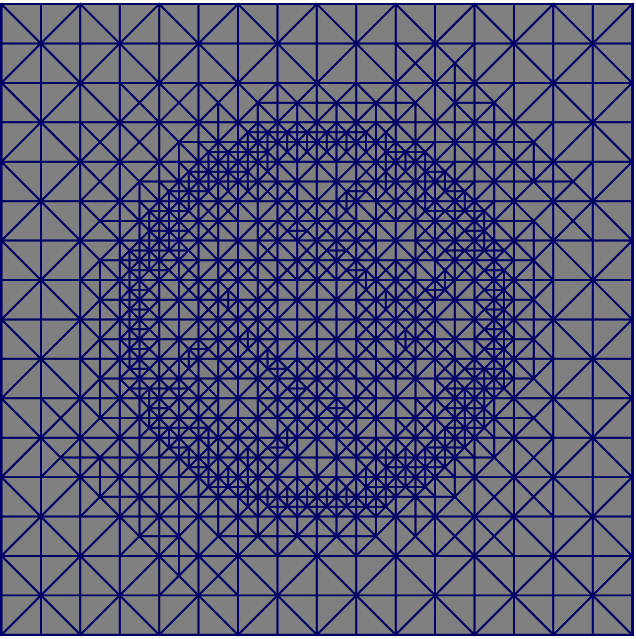}\hfill{}
\includegraphics[clip,width=0.23\textwidth]{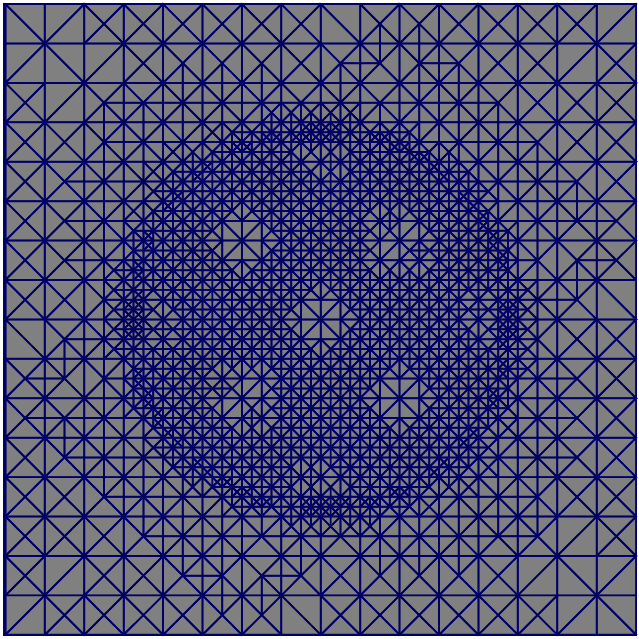}\hfill{}
\includegraphics[clip,width=0.23\textwidth]{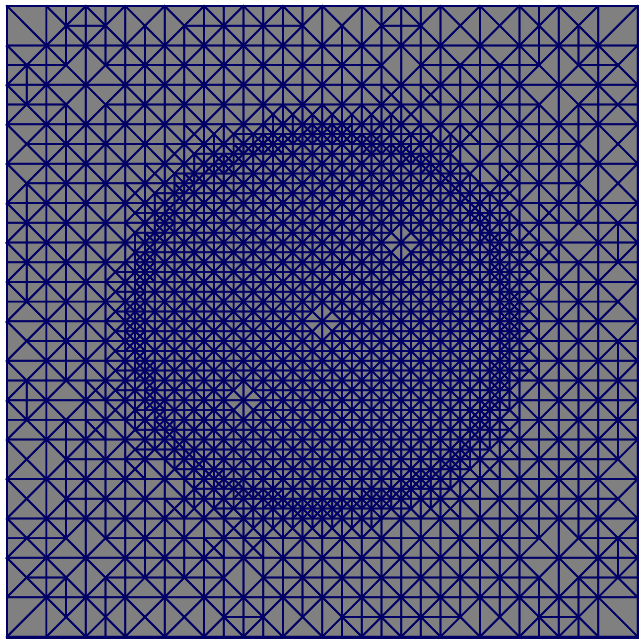}
\caption{\label{fig:adapmeshex2}
Evolution of the adaptively refined mesh (2D slice) for the second example in steps $k = 8$, $12$, $16$ and $19$ (final mesh).}
\end{figure}

We still set $\theta = 0.5$ in the marking strategy \eqref{mark1} and start our computation on a very coarse initial mesh with 604 DoFs. We show the evolution of 
adaptive meshes on the $(y,z)$-cross section at $x = 0$ in Figure \ref{fig:adapmeshex2}, from which we immediately see that the meshes are strongly concentrated in the high-contrast inclusion $\Omega_c$
and clearly capture the shape of the interface. We can also observe, from Figure \ref{fig:converge2} (left), an optimal convergence order $-1/3$ for the control and state variable and a little bit faster decrease of the error for the adjoint state $\norm{\p^*-\pk}_{\curl,\Omega}$, which is  possibly because $\p^*$ is quite smooth (a zero function), compared to $\u^*$ and $\y^*$. Figure \ref{fig:converge2} (right) again verifies the effectiveness of the error estimator $\h{\eta}_k$ and the gain of computational efficiency from the adaptive mesh refinement: the total error on the adaptively refined mesh reduces with an order $-1/3$, whereas the error on the uniform one reduces only with an order $-0.2$. 



\begin{figure}[htbp]
\hfill{}\\
\hfill{}\includegraphics[clip,width=0.5\textwidth]{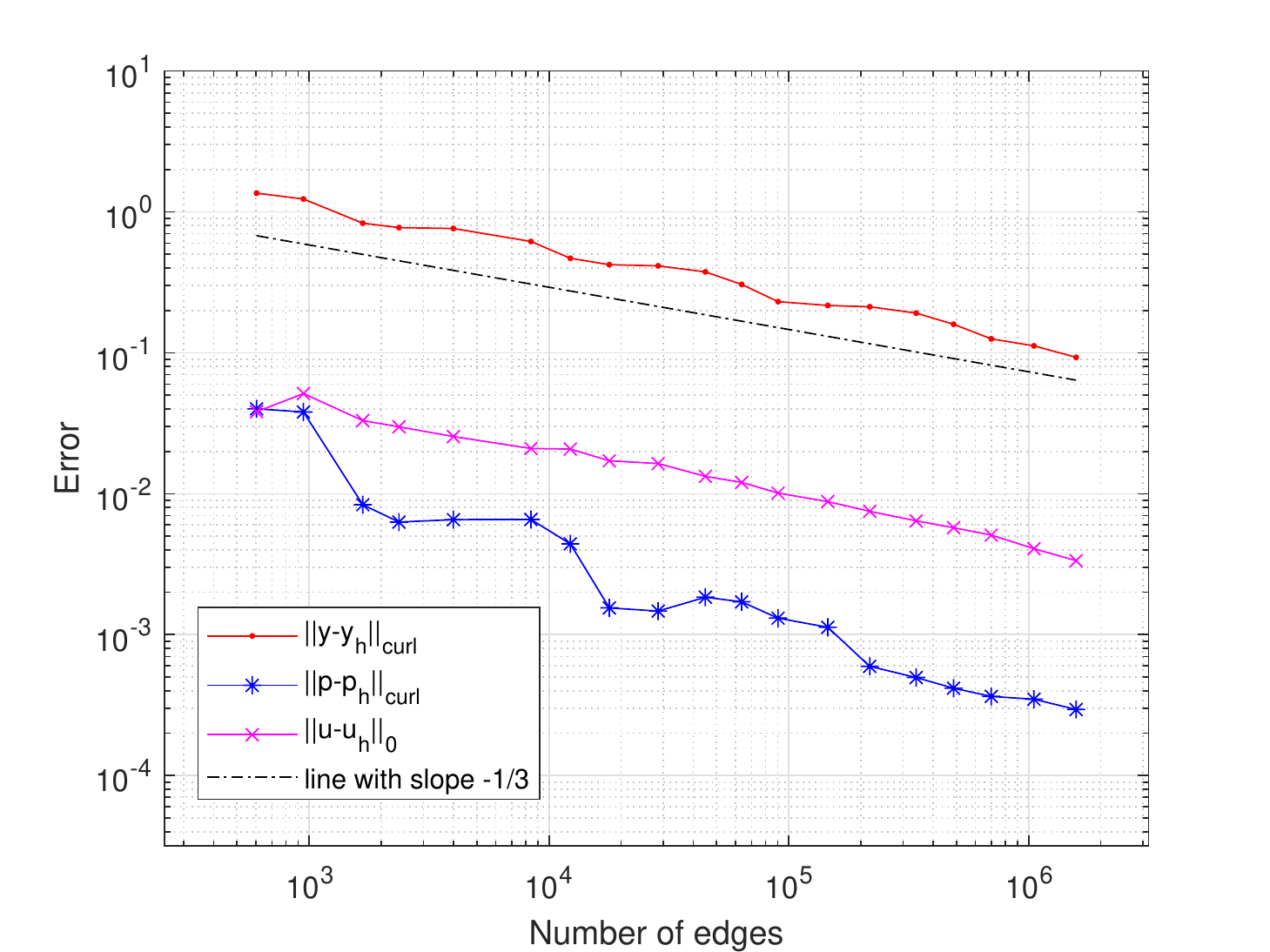}\hfill{}
\hfill{}\includegraphics[clip,width=0.5\textwidth]{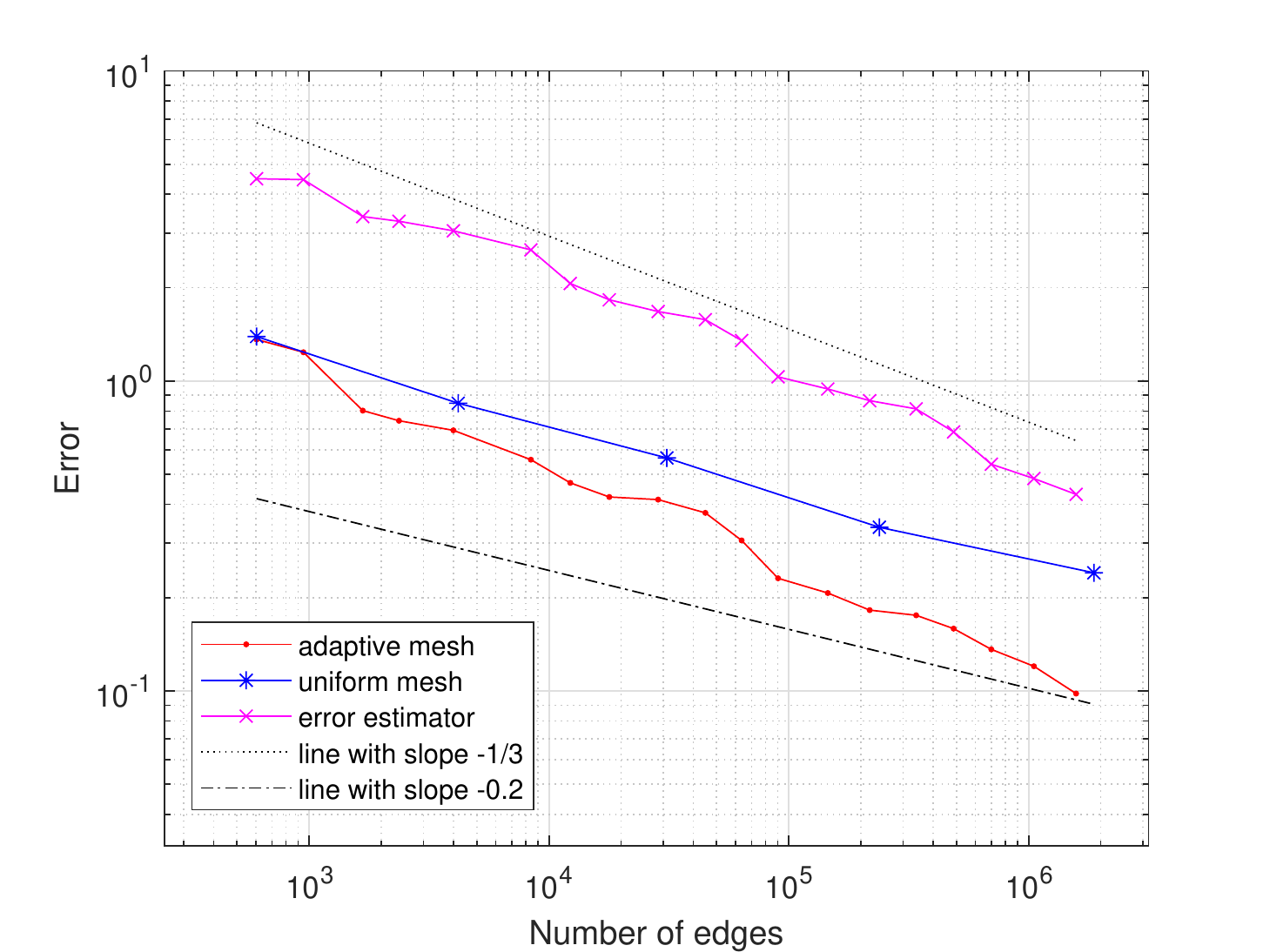}\hfill{}
\caption{\label{fig:converge2} 
Convergence histories of the control, state and adjoint state (left), and the total errors for the uniform mesh refinement and the adaptive mesh refinement, as well as the error estimator $\h{\eta}_h$ (right) for the second  example.
}
\end{figure}

\section{Concluding remarks}
In this work, we have studied an electromagnetic optimal control problem and used the lowest-order edge elements to approximate the state and adjoint state, while used  the piecewise constant functions to approximate the control. We have designed an adaptive finite element method with an error indicator involving both the residual-type error estimator and the lower-order
data oscillation.
We have established the reliability and efficiency of the a posteriori error estimator and the strong convergence of the adaptive finite element solutions for both the state and control variables.
With a very minor modification, our analysis and arguments can be directly applied to the more realistic case \cite{yousept2013optimal}\cite{xu2017convergent} where the control $\u$ is added on a subdomain $\Om_c$ of $\Omega$ and the case \cite{gong2017adaptive} where the control satisfies a bilateral constraint. It is worth pointing out that our arguments can also be modified to cope with the inhomogeneous Dirichlet boundary condition. To be exact,
 suppose that the boundary condition is given by $\gamma_t \y = \gamma_t \g$ with $\g \in \textit{\textbf{H}}(\curl,\Omega)$ being a known function satisfying the given Dirichlet trace data \cite{brenner2007mathematical}.
 Then we can check that the optimality systems \eqref{kkt1:1}-\eqref{kkt1:3} and \eqref{kkt2:1}-\eqref{kkt2:3} still hold, except that \eqref{kkt1:1} and \eqref{kkt2:1} are solved on the affine spaces $\g + \V$ and $\widetilde{\Pi}_h \g + \V_h$, respectively. Hence, up to a possible data oscillation: $\norm{\g - \widetilde{\Pi}_h \g }_{\curl,\Omega}$, our a posteriori error estimates and the convergence analysis can be readily applied. 
 
\mb{As mentioned in the Introduction, the model of our interest can be connected with the discretization of the control problem of time-dependent eddy current equations \cite{kolmbauer2012robust}\cite{nicaise2014two}\cite{nicaise2015optimal}, where the implicit time-stepping scheme is adopted for the sake of stability \cite{hiptmair1998multigrid}\cite{beck2000residual}. In this case, the coefficient $\sigma$ will be scaled by the current time-step size. Therefore, it is important to design an error estimator which is robust with respect to the scaling of the coefficients, namely, the generic constants involved in the a posteriori error analysis should be independent of the scaling factors of the coefficients. For this, we may measure the errors of the states ($\y$ and $\p$) by the energy norm on $\textit{\textbf{H}}_0(\curl,\Omega)$: $\norm{\cdot}^2_{B} = \norm{\sqrt{\mu^{-1}}\curl \cdot}^2_{0,\Omega} + \norm{\sqrt{\sigma}\cdot}_{0,\Omega}^2$, and the error estimators may also need to be scaled correspondingly. If we assume that $\mu$ and $\sigma$ are element-wise constant with respect to the initial mesh $\T{0}$ \cite{beck2000residual}, or that there exists a scaled norm $\|\cdot\|^2_{\V} := \mu_*^{-1}\|{\bf curl} \cdot\|^2_{0,\Omega} + \sigma_* \|\cdot\|_{0,\Omega}^2$ with
$\mu_*$ and $\sigma_*$ being positive constants,  such that $B(\cdot,\cdot)$ is continuous and inf-sup stable with respect to $\|\cdot\|_{\V}$ with the involved generic constants independent of $\mu$ and $\sigma$ \cite{schoberl2008posteriori},  one can naturally follow the a posteriori error analysis provided in this work to obtain a robust error analysis by using the energy norm and the scaled error estimators (for instance, if 
the norm $\norm{\cdot}_\V$ defined above exists, by adding a scaling factor $\sqrt{\mu_*}$, we can modify $\eta_{y,T}^{\sss (1)}$ and $\eta_{y,F}^{\sss (1)}$ as follows: $\eta_{y,T}^{\sss (1)} := h_T \sqrt{\mu_*} \| \f + \u_h^* - \curl \mu^{-1} \curl \y_h^* - \sigma \y_h^*\|_{0,T}$ and $\eta_{y,F}^{\sss (1)} := \sqrt{h_F \mu_*} \|[\gamma_t (\mu^{-1} \curl \y_h^*)]_F\|_{0,F}$). We refer the readers to \cite{cascon2008quasi}\cite{nochetto2009theory} for related discussions. However, the detailed and rigorous treatments for the general coefficients and the time-dependent model are not trivial tasks and need further investigations.} We finally remark that it is also of great interest to design the adaptive algorithm for more complicated models, such as the nonlinear electromagnetic control problem \cite{yousept2017optimal} and the Maxwell variational inequalities \cite{yousept2020well} \cite{yousept2020hyperbolic}. 

\hspace*{2 cm}

\mb{{\bf Acknowledgements:} The work of Jun Zou is substantially supported by Hong Kong Research Grants Council (projects 14306718 and 14306719). The authors 
would like to thank the anonymous referees for their very careful reading of the manuscript and their numerous constructive comments and suggestions, which have helped us improve the presentation and results of this work essentially.}




\end{document}